\theoremstyle{plain} 
\newtheorem{theorem}{\indent\sc Theorem}[section] 
\newtheorem{lemma}[theorem]{\indent\sc Lemma}
\newtheorem{corollary}[theorem]{\indent\sc Corollary}
\newtheorem{proposition}[theorem]{\indent\sc Proposition}
\newtheorem{claim}[theorem]{\indent\sc Claim}
\theoremstyle{definition} 
\newtheorem{definition}[theorem]{\indent\sc Definition}
\newtheorem{remark}[theorem]{\indent\sc Remark}
\begin{document}

\title[Ricci curvature]{Cheeger constant, $p$-Laplacian, and Gromov-Hausdorff convergence} 

\author[Shouhei Honda]{Shouhei Honda} 

\subjclass[2000]{Primary 53C20.}

\keywords{Cheeger constant, $p$-Laplacian, and Gromov-Hausdorff convergence.}

\address{ 
Faculty of Mathmatics \endgraf
Kyushu University \endgraf 
744, Motooka, Nishi-Ku, \endgraf 
Fukuoka 819-0395 \endgraf 
Japan
}
\email{honda@math.kyushu-u.ac.jp}

\maketitle

\begin{abstract}
We discuss the behavior of $(\lambda_{1. p}(M))^{1/p}$ with respect to the Gromov-Hausdorff topology and the variable $p$, where $\lambda_{1, p}(M)$ is the first positive eigenvalue of the $p$-Laplacian on a compact Riemannian manifold $M$. 
Applications include new estimates for the first eigenvalues of the $p$-Laplacian on Riemannian manifolds with lower Ricci curvature bounds, and isoperimetric inequalities on Gromov-Hausdorff limit spaces.  
We also establish a new Lichnerowicz-Obata type theorem.
\end{abstract}

\section{Introduction}
Let $X$ be a compact metric space and let $\upsilon$ be a Borel probability measure on $X$
(we call such a pair $(X, \upsilon)$ \textit{a compact metric measure space} in this paper).
We define \textit{Minkowski's exterior boundary measure $\upsilon^+(A)$ of a Borel subset $A$ of $X$} by
\[\upsilon^+(A):=\liminf_{r \to 0}\frac{\upsilon (B_r(A))-\upsilon (A)}{r},\]
where $B_r(A)$ is the open $r$-neighborhood of $A$.
Let us define \textit{the Cheeger constant $h(X)$} by
\[h(X):=\inf_{A}\frac{\upsilon^+(A)}{\upsilon (A)},\]
where the infimum runs over Borel subsets $A$ of $X$ with $0< \upsilon(A) \le 1/2$
(if $A$ as above does not exist, then we put $h(X):=\infty$).
It is known that if $X$ is an $n$-dimensional compact Riemannian manifold and $\upsilon$ is the canonical Riemannian probability measure on $X$ (we call such a pair \textit{an $n$-dimensional compact smooth metric measure space} in this paper), then $h(X)$ coincides Cheeger's original one \cite{ch0}:
\[h(X)= \inf_{\Omega}  \frac{H^{n-1}(\partial \Omega)}{H^n(\Omega)},\]
where the infimum runs over open subsets $\Omega$ of $X$ having the smooth boundaries $\partial \Omega$ with $H^n(\Omega) \le H^n(X)/2$ and $H^k$ is the $k$-dimensional Hausdorff measure. 
See for instance Section VI in \cite{chavel} and subsection $2.2.4$.

For every $1<p<\infty$, we define \textit{the first eigenvalue $\lambda_{1, p}(X)$ of the $p$-Laplacian} by
\[\lambda_{1, p}(X):=\inf_f  \int_X\left( \mathrm{Lip}f\right)^pd\upsilon,\]
where the infimum runs over Lipschitz functions $f$ on $X$ with
\[\int_X|f|^pd\upsilon=1\,\,\,\mathrm{and}\,\,\,\int_X|f|^{p-2}fd\upsilon=0\]
(if $f$ as above does not exist, i.e., $X$ is a single point, then we put $\lambda_{1, p}(X):=\infty$).
See subsection $2. 1$ for the definition of the pointwise Lipschitz constant $\mathrm{Lip}f$ of $f$.
It is also known that if $(X, \upsilon)$ is an $n$-dimensional compact smooth metric measure space, then $\lambda_{1, p}(X)$ coinsides the first positive eigenvalue of the following PDE:
\[\Delta_pf=\lambda |f|^{p-2}f\]
on $X$, where $\Delta_pf:=-\mathrm{div}(|\nabla f|^{p-2}\nabla f)$.

For every $n \in \mathbf{N}$, every $K \in \mathbf{R}$, and every $d>0$, let $M(n, K, d)$ be the set of isometry classes of $n$-dimensional  compact smooth metric measure spaces $(M, \mathrm{Vol})$ with $\mathrm{diam}\,M \le d$ and
\begin{equation}\label{riccib}
\mathrm{Ric}_M \ge K(n-1),
\end{equation}
where $\mathrm{diam}\,M$ is the diameter of $M$. 
We denote by $\overline{M(n, K, d)}$ the Gromov-Hausdorff compactification of $M(n, K, d)$, i.e., every $(Y, \nu) \in \overline{M(n, K, d)}$ is the measured Gromov-Hausdorff limit compact metric measure space of a sequence $\{(X_i, \upsilon_i)\}_i$ of $(X_i, \upsilon_i) \in M(n, K, d)$.

The main purpose of this paper is to study the behavior of 
\begin{equation}\label{laplaplap}
\left(\lambda_{1, p}(X)\right)^{1/p}
\end{equation}
with respect to the Gromov-Hausdorff topology and the variable $p$.

In order to state the main result of this paper,
let $F$ be the function from $\overline{M(n, K, d)} \times [1, \infty]$ to $(0, \infty]$ defined by
\[F\left((Y, \nu), p\right):=
\begin{cases} 2 (\mathrm{diam}\,Y)^{-1} \,\,\,\,\,\mathrm{if}\,p=\infty, \\
\left(\lambda_{1, p}(Y)\right)^{1/p} \,\,\,\,\,\,\mathrm{if}\,1<p<\infty, \\
h(Y) \,\,\,\,\,\,\,\,\,\,\,\,\,\,\,\,\,\,\,\,\,\,\,\,\mathrm{if}\,p=1.
\end{cases}\]

The main result of this paper is the following:
\begin{theorem}\label{mthm}
We have the following:
\begin{enumerate}
\item\label{upp} $F$ is upper semicontinuous on $\overline{M(n, K, d)} \times [1, \infty]$.
\item\label{mthm2} $F$ is continuous on $\overline{M(n, K, d)} \times (1, \infty]$.
\item\label{compac} $F$ is continuous on $\{(Y, \nu)\} \times [1, \infty]$ for every $(Y, \nu) \in \overline{M(n, K, d)}$.
\end{enumerate}
\end{theorem}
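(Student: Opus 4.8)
The plan is to organize the argument around three pieces: (i) \emph{upper semicontinuity} of $F$ on all of $\overline{M(n,K,d)}\times[1,\infty]$; (ii) \emph{lower semicontinuity} of $F$ on $\overline{M(n,K,d)}\times(1,\infty)$; and (iii) the two boundary asymptotics on each fixed space, namely $\lim_{p\to1^+}(\lambda_{1,p}(Y))^{1/p}=h(Y)$ and $\lim_{p\to\infty}(\lambda_{1,p}(Y))^{1/p}=2(\mathrm{diam}\,Y)^{-1}$ for every $(Y,\nu)\in\overline{M(n,K,d)}$. Granting these, item \eqref{upp} is exactly (i); item \eqref{mthm2} follows on $(1,\infty)$ from (i) and (ii), while continuity at $p=\infty$ follows from (i), the continuity of $\mathrm{diam}$ under measured Gromov--Hausdorff convergence, and a version of the $p\to\infty$ asymptotics made uniform over $M(n,K,d)$ (available since the doubling and Poincar\'e constants are uniform on the class); and item \eqref{compac} follows from item \eqref{mthm2} together with (iii), the only new point being continuity in $p$ at $p=1$ from within the fixed slice, which is the first limit in (iii).

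For (i), I would transplant almost-optimal competitors along Gromov--Hausdorff approximations. Fix $((Y,\nu),p)\in\overline{M(n,K,d)}\times[1,\infty]$ and a sequence $((X_i,\upsilon_i),p_i)\to((Y,\nu),p)$; the task is to bound $\limsup_i F((X_i,\upsilon_i),p_i)$ by $F((Y,\nu),p)$. For $p\in(1,\infty)$, choose a Lipschitz $f$ on $Y$ satisfying the two normalizations with $\int_Y(\mathrm{Lip}\,f)^p\,d\nu\le\lambda_{1,p}(Y)+\varepsilon$; discretizing $Y$, pulling values back through $\varepsilon_i$-Gromov--Hausdorff approximations, and interpolating (McShane extension / partition of unity) produces $f_i\in\mathrm{Lip}(X_i)$ with $\limsup_i\int_{X_i}(\mathrm{Lip}\,f_i)^{p_i}\,d\upsilon_i\le\int_Y(\mathrm{Lip}\,f)^p\,d\nu$ (using that $t\mapsto t^{p_i}$ converges to $t\mapsto t^p$ uniformly on bounded sets and that the measures converge weakly). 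The two normalizations are then restored by subtracting the unique zero $c_i$ of the continuous, strictly decreasing, surjective map $c\mapsto\int_{X_i}|f_i-c|^{p_i-2}(f_i-c)\,d\upsilon_i$ (one checks $c_i\to0$) and rescaling in $L^{p_i}$; hence $\limsup_i\lambda_{1,p_i}(X_i)\le\lambda_{1,p}(Y)+\varepsilon$, and letting $\varepsilon\to0$ and taking $p_i$-th roots gives the bound. For $p=\infty$ one argues the same way with $\mathrm{Lip}\,f$ in place of its $L^p$-norm, and, when $p_i<\infty$, tests $\lambda_{1,p_i}(X_i)$ against a centered distance function $d(\cdot,x_i)$ with $x_i$ nearly realizing $\mathrm{diam}\,X_i$, whose $L^{p_i}$-norm stays $\ge(\mathrm{diam}\,X_i)/2-o(1)$ by the uniform lower volume bounds on the class. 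For $p=1$ one uses a Lipschitz, coarea-type reformulation of $h$ so that the same transplantation applies; this detour is needed because Minkowski's exterior boundary measure is not itself well behaved under Gromov--Hausdorff convergence.

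For (ii), fix $p\in(1,\infty)$, take $(X_i,\upsilon_i)\to(Y,\nu)$ and $p_i\to p$, and let $f_i$ be a $p_i$-eigenfunction on $X_i$ with $\int_{X_i}|f_i|^{p_i}\,d\upsilon_i=1$. The key input is a priori regularity that is uniform over $M(n,K,d)$ and locally uniform in $p$: the uniform Sobolev and Poincar\'e inequalities on $M(n,K,d)$ feed Moser iteration to give $\|f_i\|_{L^\infty}\le C(n,K,d,p)$ (using also that $\lambda_{1,p_i}(X_i)$ is bounded above by (i)), and a gradient estimate for the $p_i$-eigenvalue equation then gives $\mathrm{Lip}\,f_i\le C(n,K,d,p)$. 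By Arzel\`a--Ascoli along the Gromov--Hausdorff approximations, a subsequence of $f_i$ converges uniformly to some $f\in\mathrm{Lip}(Y)$; the normalizations pass to the limit (from uniform convergence of $f_i$, weak convergence of $\upsilon_i$, and $p_i\to p$), so $f$ is admissible for the $p$-problem on $Y$. For each $q\in(1,p)$, lower semicontinuity of the $L^q$-Cheeger energy under measured Gromov--Hausdorff convergence together with $\|\cdot\|_{L^q(\upsilon_i)}\le\|\cdot\|_{L^{p_i}(\upsilon_i)}$ gives $\left(\int_Y(\mathrm{Lip}\,f)^q\,d\nu\right)^{1/q}\le\liminf_i\left(\lambda_{1,p_i}(X_i)\right)^{1/p_i}$; letting $q\uparrow p$ yields $\left(\lambda_{1,p}(Y)\right)^{1/p}\le\liminf_i\left(\lambda_{1,p_i}(X_i)\right)^{1/p_i}$, i.e.\ lower semicontinuity of $F$ at $((Y,\nu),p)$. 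Finally, for (iii): the Cheeger inequality $\lambda_{1,p}(Y)\ge(h(Y)/p)^p$ gives $\liminf_{p\to1^+}(\lambda_{1,p}(Y))^{1/p}\ge h(Y)$, the reverse bound comes from testing against a truncated distance function built from an almost-optimal set for $h(Y)$ and letting $p\to1^+$, and the $p\to\infty$ limit is the known $\infty$-eigenvalue asymptotics, valid on doubling spaces supporting a Poincar\'e inequality.

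The main obstacle is the a priori estimate in (ii): producing a gradient (Lipschitz) bound for eigenfunctions of $\Delta_{p_i}$ that is uniform over the whole class $M(n,K,d)$ and stable as $p_i\to p$, and then running the energy lower-semicontinuity so that the $p_i$-energies are controlled in the limit --- the degeneracy of $\Delta_p$ together with the simultaneous variation of the exponent makes the regularity theory and the closure step the delicate points. The other place requiring care is the corner $p=1$: since Minkowski's exterior boundary measure is not continuous under Gromov--Hausdorff convergence, the Cheeger constant must be accessed through its Lipschitz/coarea description, both for the transplantation in (i) and for matching it with $\lim_{p\to1^+}(\lambda_{1,p})^{1/p}$ in (iii).
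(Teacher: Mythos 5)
Your decomposition (upper semicontinuity everywhere, lower semicontinuity on $(1,\infty]$, slice-wise limits at $p=1$ and $p=\infty$) matches the architecture of the paper's proof, but the three hardest steps are not actually supplied, and in each case the device you propose either fails or is circular. First, in the transplantation for upper semicontinuity: discretizing $Y$, pulling back values through Gromov--Hausdorff approximations, and extending by McShane controls only the \emph{global} Lipschitz constant $\mathbf{Lip}f_i$ by $\mathbf{Lip}f$; it does not give $\limsup_i\int_{X_i}(\mathrm{Lip}f_i)^{p_i}d\upsilon_i\le\int_Y(\mathrm{Lip}f)^pd\nu$, since $\int(\mathrm{Lip}f_i)^{p_i}d\upsilon_i$ can be as large as $(\mathbf{Lip}f)^{p_i}$, which vastly exceeds the energy of $f$. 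The paper obtains exactly this energy-controlled approximation from \cite[Theorem $4.2$]{holip}, a nontrivial result about $L^p$-strong convergence of Lipschitz functions \emph{and their differentials}; without it (or Theorem \ref{plap} of \cite{ho}, which the paper cites as a black box for $p\in(1,\infty)$), your upper bound does not close. Second, for lower semicontinuity your route runs through a Lipschitz a priori bound for $p_i$-eigenfunctions, uniform over $M(n,K,d)$ and stable as $p_i\to p$ (including $p_i\to\infty$); you correctly flag this as the main obstacle, but it is not available from doubling and Poincar\'e alone, and asserting that the $p\to\infty$ asymptotics ``made uniform over $M(n,K,d)$'' is available is circular --- that uniform statement is Corollary \ref{grosje}, a \emph{consequence} of the theorem. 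The paper instead avoids all pointwise gradient estimates: it uses the Rellich-type compactness of \cite[Theorem $4.9$]{ho} (weak compactness in $H_{1,p}$ plus lower semicontinuity of the energy under measured Gromov--Hausdorff convergence), and for the $p\to\infty$ endpoint runs Grosjean's nodal-domain/inradius argument on the limit space, which needs the segment inequality, Proposition \ref{lipinf} (to pass from $\|\mathrm{Lip}f\|_{L^\infty}$ to $\mathbf{Lip}f$), and the Haj\l asz--Koskela H\"older embedding to get uniform convergence of the Dirichlet eigenfunctions; none of this appears in your outline.

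Third, for continuity at $p=1$ on a fixed slice you invoke the Cheeger-type inequality $\lambda_{1,p}(Y)\ge(h(Y)/p)^p$ on the limit space $Y$. That inequality is Matei's and is proved for smooth manifolds; the paper explicitly derives its (weak) extension to limit spaces \emph{from} Theorem \ref{mthm} (Remark \ref{cheeger}), so using it as an input risks circularity unless you prove it directly on $(Y,\nu)$. The paper's actual mechanism is different and is its key technical contribution: the compact embedding $H_{1,p}\hookrightarrow L^r$ with respect to Gromov--Hausdorff convergence (Theorem \ref{compact}, built from the $(q,p)$-Poincar\'e inequality of Haj\l asz--Koskela and a telescope/McShane truncation argument), which yields right-continuity of $r\mapsto\lambda_{1,r}(Y)$ at $r=1$ (Theorem \ref{rrrr}) by extracting an $L^r$-strongly convergent subsequence of near-minimizers with $r>1$ and passing $c_{q_i}(f_i)\to c_q(f_\infty)$ to the limit. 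You should either reproduce that compactness argument or give a direct metric-measure proof of the Cheeger inequality via the coarea/Federer--Fleming characterization of $h$; as written, the $\liminf$ direction at $p=1$ is a gap.
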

Note that Theorem \ref{mthm} is a generalization of the following results:
\begin{enumerate}
\item Cheeger-Colding proved in \cite{ch-co3} that the eigenvalues of the Dirichlet Laplacian on spaces with lower Ricci curvature bounds are continuous with respect to the Gromov-Hausdorff topology (this was conjectured by Fukaya in \cite{fu}).
In particular, this yields that $F$ is continuous on $\overline{M(n, K, d)} \times \{2\}$.  
\item Grosjean proved in \cite{gros} that 
\begin{equation}\label{grosjean}
\lim_{p \to \infty}\left( \lambda_{1, p}(M)\right)^{1/p}=\frac{2}{\mathrm{diam}\,M}
\end{equation}
holds for every compact Riemannian manifold $M$.
\end{enumerate}
Note that it is essential to study of $(\lambda_{1, p})^{1/p}$ instead of $\lambda_{1, p}$.
See Remark \ref{99009900} for a reason.

In order to introduce an application of Theorem \ref{mthm}, we recall the following Matei's estimates \cite{Ma}:
\begin{equation}\label{777777}
\frac{h(M)}{p} \le \left(\lambda_{1, p}(M)\right)^{1/p}
\end{equation}
and
\begin{equation}\label{7777778}
\left(\lambda_{1, p}(M)\right)^{1/p} \le C(n, K)\left(h(M) +h(M)^p\right)^{1/p}
\end{equation}
hold for every $1<p <\infty$ and every $n$-dimensional compact Riemannian manifold $M$ with (\ref{riccib}),
where $C(n, K)$ is a positive constant depending only on $n$ and $K$.
Note that for $p=2$, (\ref{777777}) and (\ref{7777778}) correspond to Cheeger's isoperimetric inequality \cite{ch0} and Buser's one \cite{bu}, respectively.

For positive numbers $a, b \in \mathbf{R}_{>0}$, we now use the notation: $a \stackrel{n, K}{\asymp} b$ if there exists a positive number $C:=C(n, K)>1$ depending only on $n$ and $K$ such that $C^{-1}b\le a \le Cb$ holds. 
Theorem \ref{mthm} yields the following scale invariant estimates. Compare with (\ref{777777}) and (\ref{7777778}).
\begin{theorem}\label{wawawa}
Let $M$ be an $n$-dimensional compact Riemannian manifold with
\begin{equation}\label{3444} 
\left(\mathrm{diam}\,M\right)^2\mathrm{Ric}_M \ge K(n-1).
\end{equation}
Then, we have
\begin{equation}\label{liyaq}
\left(\lambda_{1, p}(M)\right)^{1/p} \stackrel{n, K}{\asymp} h(M) \stackrel{n, K}{\asymp} \left(\mathrm{diam}\,M\right)^{-1}
\end{equation}
for every $1<p<\infty$.
\end{theorem}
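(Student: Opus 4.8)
The plan is to reduce everything to a compactness/continuity statement obtained from Theorem \ref{mthm}, using scale invariance to normalize the diameter to $1$. First I would observe that all three quantities in \eqref{liyaq} are scale invariant: if $M$ is rescaled by a factor $c>0$, then $\mathrm{diam}\,M$ scales like $c$, $h(M)$ scales like $c^{-1}$ (Minkowski boundary measure has one more length in the denominator than the numerator after renormalizing the probability measure), and $\lambda_{1,p}(M)$ scales like $c^{-p}$, so $(\lambda_{1,p}(M))^{1/p}$ scales like $c^{-1}$. Moreover the hypothesis \eqref{3444}, namely $(\mathrm{diam}\,M)^2\mathrm{Ric}_M\ge K(n-1)$, is itself scale invariant. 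Hence, replacing $M$ by $(\mathrm{diam}\,M)^{-1}M$, it suffices to prove: for every $n$-dimensional compact Riemannian manifold $M$ with $\mathrm{diam}\,M=1$ and $\mathrm{Ric}_M\ge K(n-1)$, one has $(\lambda_{1,p}(M))^{1/p}\stackrel{n,K}{\asymp}1$ and $h(M)\stackrel{n,K}{\asymp}1$, with constants independent of $p\in(1,\infty)$.

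Next I would package the normalized spaces into the compact space $\overline{M(n,K,1)}$. On the compact set $\overline{M(n,K,1)}\times[1,\infty]$, the function $F$ is upper semicontinuous by Theorem \ref{mthm}(\ref{upp}), hence bounded above by some constant $C_1=C_1(n,K)$; in particular $(\lambda_{1,p}(M))^{1/p}=F((M,\mathrm{Vol}),p)\le C_1$ for all such $M$ and all $p$, and likewise $h(M)=F((M,\mathrm{Vol}),1)\le C_1$. For the lower bound I would argue that $F$ is strictly positive everywhere on $\overline{M(n,K,1)}\times[1,\infty]$: for $p=\infty$, $F=2(\mathrm{diam}\,Y)^{-1}$ and $\mathrm{diam}\,Y\le 1$ (diameter cannot increase in the GH limit under this normalization — more precisely $\mathrm{diam}$ is continuous and bounded by $1$ on the limit), so $F\ge 2>0$; for finite $p$ and for $p=1$, positivity of $\lambda_{1,p}(Y)$ and $h(Y)$ on these limit spaces is where the lower Ricci bound really enters, and this is essentially the content of the known spectral gap / Cheeger-type lower bounds on $RCD$ or Ricci-limit spaces that the paper develops in its preliminary sections. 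Granting positivity pointwise, I would then use that a positive upper semicontinuous function on a compact space attains a positive minimum only if it is also lower semicontinuous, so instead I would invoke continuity: by Theorem \ref{mthm}(\ref{mthm2}), $F$ is continuous on the compact set $\overline{M(n,K,1)}\times[p_0,\infty]$ for any fixed $p_0>1$, hence attains a positive minimum $c(n,K,p_0)$ there; and on the remaining strip $\overline{M(n,K,1)}\times[1,p_0]$ I would use Theorem \ref{mthm}(\ref{compac}) together with Matei's lower bound \eqref{777777}, $h(M)/p\le(\lambda_{1,p}(M))^{1/p}$, to transfer a uniform positive lower bound for $h$ (the $p=1$ slice) to the whole strip after controlling $p$ by $p_0$.

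The cleanest route, which I would actually carry out, is therefore: (i) show $\inf h>0$ over normalized $M$, i.e. $h(M)\ge c_0(n,K)$, which follows since $h=F(\cdot,1)$ is positive on the compact set $\overline{M(n,K,1)}\times\{1\}$ and, by part (\ref{compac}) applied slicewise, the infimum over the manifolds in $M(n,K,1)$ equals the minimum over the compactification, hence is a positive constant; (ii) from \eqref{777777} deduce $(\lambda_{1,p}(M))^{1/p}\ge h(M)/p$, which is not yet uniform in $p$, so instead combine the upper bound $h(M)\le C_1$ with \eqref{7777778} to get $(\lambda_{1,p}(M))^{1/p}\le C(n,K)(h(M)+h(M)^p)^{1/p}\le C(n,K)(C_1+C_1^p)^{1/p}$, which is bounded by a constant times $\max\{1,C_1\}$ independent of $p$; (iii) for the lower bound on $(\lambda_{1,p})^{1/p}$ uniform in $p$, use Grosjean's limit \eqref{grosjean} plus monotonicity/continuity in $p$ from part (\ref{compac}): since $p\mapsto(\lambda_{1,p}(M))^{1/p}$ is continuous on $[1,\infty]$ with endpoint values $h(M)\ge c_0$ and $2(\mathrm{diam}\,M)^{-1}=2$, and since $F$ is jointly continuous on $(1,\infty]$, a standard compactness argument on $\overline{M(n,K,1)}\times[1,\infty]$ yields a uniform positive lower bound. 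Chaining these with the two normalizations $\mathrm{diam}\,M=1$ gives \eqref{liyaq}.

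The main obstacle I anticipate is step (iii): getting the lower bound for $(\lambda_{1,p}(M))^{1/p}$ \emph{uniform in $p$} as $p\to 1^+$ and as $p\to\infty$ simultaneously. Near $p=\infty$ this is handled by \eqref{grosjean} and continuity at $p=\infty$ (part (\ref{mthm2}) gives joint continuity up to and including $\infty$ on $(1,\infty]$); near $p=1$ one must rule out $(\lambda_{1,p}(M))^{1/p}\to 0$, and the only tool available without a direct estimate is the joint continuity/compactness of $F$ on $\overline{M(n,K,1)}\times[1,\infty]$ — but $F$ is only known to be upper semicontinuous there, not continuous at $p=1$ in the joint variables. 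I would circumvent this by instead using the slicewise continuity in $p$ (part (\ref{compac})) to note that for each fixed limit space $(Y,\nu)$ the function $p\mapsto F((Y,\nu),p)$ is continuous and positive on the compact interval $[1,\infty]$, hence bounded below by a positive constant $c(Y)$; then an upper semicontinuity/contradiction argument (if no uniform $c(n,K)>0$ existed, extract a GH-convergent sequence $(M_i,p_i)$ with $F\to 0$, pass to a limit $(Y_\infty,p_\infty)$, and contradict $F((Y_\infty,\cdot),p_\infty)>0$ using upper semicontinuity of $F$ to bound the limsup from above by $F$ at the limit — wait, that gives the wrong direction, so one instead uses the lower bound $h(M_i)/p_i\le F(M_i,p_i)$ together with $h(M_i)\ge c_0$ and $p_i\le p_\infty<\infty$ when $p_\infty\neq\infty$, and \eqref{grosjean}-type control when $p_\infty=\infty$). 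This case analysis on the location of $p_\infty\in[1,\infty]$ is the real content of the proof; everything else is scaling and bookkeeping.
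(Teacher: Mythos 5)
Your normalization step, your upper bound (finiteness of $\max F$ on the compact set $\overline{M(n,K,1)}\times[1,\infty]$ via upper semicontinuity, or alternatively via (\ref{7777778})), and your lower bound for $p$ bounded away from $1$ (continuity of $F$ on the diameter-one slice times $[p_0,\infty]$ from part (\ref{mthm2}) of Theorem \ref{mthm}, plus compactness) all match the paper, which takes $p_0=2$. The gap is in your treatment of $p$ near $1$. You correctly reduce that range, via Matei's inequality (\ref{777777}), to a uniform positive lower bound $h(M)\ge c_0(n,K)$ for diameter-normalized manifolds; but your proposed proof of that bound does not work. Part (\ref{compac}) of Theorem \ref{mthm} only gives continuity of $p\mapsto F((Y,\nu),p)$ for each \emph{fixed} space and says nothing about the dependence of $h=F(\cdot,1)$ on the space variable, where only upper semicontinuity is known. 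Pointwise positivity of an upper semicontinuous function on a compact set does not yield a positive infimum (upper semicontinuity delivers a finite maximum, not a positive minimum), so the claim that ``the infimum over the manifolds equals the minimum over the compactification, hence is a positive constant'' is unjustified; whether $F$ is lower semicontinuous in the space variable at $p=1$ is exactly the part of the continuity of $F$ that the paper leaves open.

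The missing ingredient is a quantitative estimate, and it is already available in Section 2 of the paper: by Theorem \ref{ch-co} and Proposition \ref{seg}, every $M\in M(n,K,1)$ satisfies a $(1,1)$-Poincar\'e inequality with constant $\tau(n,K)$; Theorem \ref{sp}, Remark \ref{op}, and Remark \ref{spr} upgrade this to a $(p,p)$-Poincar\'e inequality with constants uniformly bounded for $p\in[1,2]$; and Remark \ref{101} then gives $\lambda_{1,p}(M)\ge \left(C(n,K)\,\mathrm{diam}\,M\right)^{-1}$ for all $1\le p<2$. This supplies both the uniform lower bound on $h$ (the case $p=1$) that your step (i) needs and, more directly, the uniform lower bound on $(\lambda_{1,p}(M))^{1/p}$ for $1<p<2$ without any detour through $h$. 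This is precisely how the paper closes the argument; with that substitution your proof is complete.
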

Note that the scale invariant assumption (\ref{3444}) is essential.
See Colbois-Matei's result \cite{colb} for a reason.
There are many important works on lower bounds of $\lambda_{1, p}$ \cite{kn, Ma, matei2, NV, V, V2, zy}.
It is important that (\ref{liyaq}) is a two-sided bound and is \textit{independent} of the exponent $p$.
Note that roughly speaking, (\ref{liyaq}) implies that if $(\lambda_{1, p})^{1/p}$ is small (or big) for some $p$, then $(\lambda_{1, q})^{1/q}$ is also small (or big) for every $q$, quantitatively. See Corollary \ref{hoho}.

Other applications of Theorem \ref{mthm} include the estimates (\ref{777777}), (\ref{7777778}) and (\ref{liyaq}) on limit spaces, 
a quantitative version of Grosjean's result (\ref{grosjean}), and a new Lichnerowicz-Obata type theorem.
See Corollary \ref{grosje}, Remarks \ref{cheeger}, \ref{8877665544} and Theorem \ref{mey} for the precise statements.

We now give an outline of the proof of Theorem \ref{mthm}.
It consists of the following two steps:
\begin{enumerate}
\item The first step is to establish a compact embedding of Sobolev spaces $H_{1, p} \hookrightarrow L^r$ with respect to the Gromov-Hausdorff topology via $(q, p)$-Poincar\'e inequality. 
See Theorem \ref{compact}.
This is a key to prove (\ref{compac}) of Theorem \ref{mthm}.
\item The second step is to apply Grosjean's argument in \cite{gros} to our setting with the Rellich type compactness with respect to the Gromov-Hausdorff topology given in \cite{ho}. 
This argument with several results given in \cite{holip, ho} allows us to prove Theorem \ref{mthm}.
\end{enumerate}
Theorem \ref{wawawa} is proven by a compactness argument via Theorem \ref{mthm}.

The organization of this paper is as follows:

In Section $2$, we will recall several fundamental notion and properties of metric measure spaces.
We will also establish the first step above.

Section $3$ corresponds to the second step above, i.e., we will finish the proof of Theorem \ref{mthm}. 
We will also prove Theorem \ref{wawawa}.

In Section $4$, we will apply Theorem \ref{mthm} to establish a new Lichnerowicz-Obata type theorem for limit spaces and give an application.

\textbf{Acknowledgments.}
The author would like to express his appreciation to Professor Kei Funano for his suggestion regarding the main theorem and several discussion.
He was supported  by Grant-in-Aid for Young Scientists (B) $24740046$.
\section{Preliminaries}
In this section, we fix several notation and prepare several tools on metric measure spaces in order to prove Theorem \ref{mthm}.
\subsection{Notation}
For real numbers $a, b \in \mathbf{R}$ and a positive number $\epsilon>0$, throughout this paper, we use the following notation:
\[a=b \pm \epsilon \Longleftrightarrow |a-b|<\epsilon.\]

Let us denote by $ \Psi (\epsilon_1, \epsilon _2 ,\ldots ,\epsilon_k  ; c_1, c_2,\ldots, c_l )$
some positive valued function on $\mathbf{R}_{>0}^k \times \mathbf{R}^l $ satisfying 
\[\lim_{\epsilon_1, \epsilon_2,\ldots ,\epsilon_k \to 0}\Psi (\epsilon_1, \epsilon_2,\ldots ,\epsilon_k  ; c_1, c_2 ,\ldots ,c_l)=0\] 
for fixed real numbers $c_1, c_2,\ldots ,c_l$.
We often denote by $C(c_1, c_2,\ldots ,c_l)$ some positive constant depending only on fixed real numbers $c_1, c_2,\ldots ,c_l$.

Let $X$ be a metric space and let $x \in X$. 
For every $r>0$, put $B_r(x):=\{w \in X; \overline{x, w}<r\}$, where $\overline{x, w}$ is the distance between $x$ and $w$.
We say that \textit{$X$ is a geodesic space} if for every $p, q \in X$ there exists an isometric embedding $\gamma : [0, \overline{p, q}] \to X$ such that $\gamma (0)=p$ and $\gamma (\overline{p, q})=q$ hold (we call $\gamma$ \textit{a minimal geodesic from $p$ to $q$}).
For every Lipschitz function $f$ on $X$, let 
\[\mathbf{Lip}f:= \sup_{a \neq b}\frac{|f(a)-f(b)|}{\overline{a, b}},\]
let
\[\mathrm{Lip}f(x):= \lim_{r \to 0}\sup_{y \in B_r(x) \setminus \{x\}}\frac{|f(x)-f(y)|}{\overline{x, y}}\]
if $x$ is not isolated in $X$, and let $\mathrm{Lip}f(x):=0$ otherwise.
\subsection{Metric measure spaces} 
Throughout subsection $2.2$, we always discuss about a compact metric measure space $(X, \upsilon)$.
Let $\kappa, \tau$, and $\lambda$ be positive numbers.
\subsubsection{Doubling condition and Poincar\'e inequality}
We recall the definitions of doubling conditions and Poincar\'e inequalities (for Lipschitz functions) on metric measure spaces:
\begin{definition}
Let $p, q \in [1, \infty)$.
\begin{enumerate}
\item We say that \textit{$(X, \upsilon)$ satisfies the doubling condition for $\kappa$} if $\upsilon (B_{2r}(x)) \le 2^{\kappa}\upsilon (B_r(x))$ holds for every $r>0$ and every $x \in X$.
\item We say that \textit{$(X, \upsilon)$ satisfies the $(q, p)$-Poincar\'e inequality for $\tau$} if 
\[\left(\frac{1}{\upsilon (B_r(x))}\int_{B_r(x)}\left| f- \frac{1}{\upsilon (B_r(x))}\int_{B_r(x)}fd\upsilon\right|^qd\upsilon\right)^{1/q} \le \tau r \left( \frac{1}{\upsilon (B_r(x))}\int_{B_r(x)} (\mathrm{Lip}f)^pd\upsilon \right)^{1/p}\]
holds for every $x \in X$, every $r>0$, and every Lipschitz function $f$ on $X$.
\end{enumerate}
\end{definition}
\begin{remark}\label{op}
\begin{enumerate}
\item The  H$\ddot{\text{o}}$lder inequality yields that if $(X, \upsilon)$ satisfies the $(q, p)$-Poincar\'e inequality for $\tau$, then for every $\hat{p} \ge p$ and every $\hat{q} \le q$, $(X, \upsilon)$ satisfies the $(\hat{q}, \hat{p})$-Poincar\'e inequality for $\tau$.
\item Assume that $(X, \upsilon)$ satisfies the doubling condition for $\kappa$. 
For every $f \in L^1(X)$, let $\mathrm{Leb}\,f$ be the set of points $x \in X$ with
\[\lim_{r \to 0}\frac{1}{\upsilon (B_r(x))}\int_{B_r(x)}|f(y)-f(x)|d\upsilon=0.\]
Then, we have
\[\upsilon (X \setminus \mathrm{Leb}\,f)=0.\]
See for instance \cite{He} for the proof.
\item Let $1<p<\infty$. Assume that $(X, \upsilon)$ satisfies the doubling condition for $\kappa$ and that $(X, \upsilon)$ satisfies the $(1, p)$-Poincar\'e inequality for $\tau$. Then, we can define the Sobolev space $H_{1, p}(X)$. See for instance \cite{ch1, hajl, shanm} for the definition.
It is known that the space of Lipschitz functions on $X$ is dense in $H_{1, p}(X)$. See \cite[Theorem $4.47$]{ch1}.
\end{enumerate}
\end{remark}
We now recall \textit{Haj\l asz-Koskela's Poincar\'e-Sobolev inequality}:
\begin{theorem}\cite[Theorem $1$]{HK}\label{sp}
Let $1 \le p<\infty$.
Assume that $X$ is a geodesic space, that $(X, \upsilon)$ satisfies the doubling condition for $\kappa$, and that $(X, \upsilon)$ satisfies the $(1, p)$-Poincar\'e inequality for $\tau$.
Then, we see that $(X, \upsilon)$ satisfies the $(\hat{p}, p)$-Poincar\'e inequality for some $C:=C(\kappa, \tau, p)>0$ and some $\hat{p}:=\hat{p}(\kappa, \tau, p)>p$.
\end{theorem}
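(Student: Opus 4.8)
The plan is to reproduce the argument of Haj\l asz and Koskela \cite{HK}; here I only outline the steps. Replacing $\kappa$ by $\max\{\kappa,p+1\}$ (which only strengthens the doubling hypothesis), I may assume $\kappa>p$, and I will prove the conclusion with $\hat p:=\kappa p/(\kappa-p)>p$, which depends only on $\kappa$ and $p$. Throughout, $g:=(\mathrm{Lip}f)^p$, all integrals are with respect to $\upsilon$, and $g_B:=\upsilon(B)^{-1}\int_B g$. The argument has three phases: a pointwise potential estimate obtained by telescoping; the passage from it to a weak-type Sobolev--Poincar\'e inequality via the maximal operator; and an upgrade to the stated (strong, undilated) inequality via truncation and geodesic chaining.

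\emph{The pointwise estimate.} Fix a Lipschitz $f$, a ball $B:=B_R(x_0)$, and $x\in B$. Put $B_j:=B_{2^{-j+1}R}(x)$, so that $B\subset B_0\subset B_{3R}(x_0)=:3B$ and $\upsilon(B_j)^{-1}\int_{B_j}f\to f(x)$ as $j\to\infty$ by continuity of $f$. I would telescope $f(x)-f_B$ along $(B_j)_{j\ge0}$, bound each increment by $|f_{B_{j+1}}-f_{B_j}|\le(\upsilon(B_j)/\upsilon(B_{j+1}))\,\upsilon(B_j)^{-1}\int_{B_j}|f-f_{B_j}|$ using the doubling condition and the $(1,p)$-Poincar\'e inequality on $B_j$ (and similarly for $|f_B-f_{B_0}|$), to reach
\[|f(x)-f_B|\le C(\kappa,\tau,p)\sum_{j\ge0}2^{-j}R\left(\upsilon(B_j)^{-1}\int_{B_j}g\right)^{1/p}.\]
Then I would estimate the series by the Hedberg splitting: on large balls one uses $\int_{B_j}g\le\int_{3B}g$ together with the mass bound $\upsilon(B_j)\ge C(\kappa)^{-1}2^{-\kappa j}\upsilon(B)$ (obtained by iterating doubling), and on small balls one uses $\upsilon(B_j)^{-1}\int_{B_j}g\le M(g\chi_{3B})(x)$, the restricted maximal function. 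Optimizing over the splitting scale --- this is precisely where $\kappa>p$ enters, to make the relevant geometric series converge --- gives
\[|f(x)-f_B|\le C(\kappa,\tau,p)\,R\left(\upsilon(B)^{-1}\int_{3B}g\right)^{1/\kappa}\left(M(g\chi_{3B})(x)\right)^{1/p-1/\kappa}.\]

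\emph{Weak type, truncation, and dilation removal.} Feeding this into the weak $(1,1)$ bound $\upsilon(\{M(g\chi_{3B})>t\})\le C(\kappa)t^{-1}\int_{3B}g$ for the maximal operator on the doubling space $(X,\upsilon)$ (see \cite{He}) and using the identities $(1/p-1/\kappa)^{-1}=\hat p$ and $1+\hat p/\kappa=\hat p/p$, a short computation gives the weak-type inequality
\[\upsilon(\{x\in B:|f(x)-f_B|>t\})\le C(\kappa,\tau,p)\,\upsilon(B)\left(\frac{R\,(\upsilon(3B)^{-1}\int_{3B}(\mathrm{Lip}f)^p)^{1/p}}{t}\right)^{\hat p}\]
for all $t>0$. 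To remove the ``weak'' I would use the truncation method (going back to Maz'ya; see \cite{HK}): subtracting a median of $f$ on $B$ reduces matters to a nonnegative Lipschitz $u$ with $\upsilon(\{u=0\}\cap B)\ge\upsilon(B)/2$, and applying the weak-type inequality to the truncations $u_k:=\min\{(u-2^k)_+,2^k\}$ --- which satisfy $\mathrm{Lip}u_k\le(\mathrm{Lip}u)\chi_{\{2^k\le u\le 2^{k+1}\}}$ --- and summing over $k\in\mathbf{Z}$ using the (up to bounded overlap) disjointness of the bands $\{2^k\le u\le2^{k+1}\}$ together with the elementary inequality $\sum_k b_k^{\hat p/p}\le(\sum_k b_k)^{\hat p/p}$ for $b_k\ge0$ (valid since $\hat p\ge p$) yields the strong-type bound $\|f-f_B\|_{L^{\hat p}(B)}\le C(\kappa,\tau,p)\,R\,\|\mathrm{Lip}f\|_{L^p(3B)}$. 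Since $\upsilon(X)=1$ and $\hat p>p$, the resulting factor $\upsilon(3B)^{1/p}\upsilon(B)^{-1/\hat p}$ is bounded by $C(\kappa)$, so this is the $(\hat p,p)$-Poincar\'e inequality with dilation $3$ on the right-hand side. Finally, since $X$ is geodesic, a standard chaining argument (see \cite{HK}) --- joining two points of a ball by a minimal geodesic, covering it by a Boman-type chain of balls, and summing the dilated inequality along the chain --- allows the enlarged ball $3B$ to be replaced by $B$ itself at the cost of a larger constant, producing the $(\hat p,p)$-Poincar\'e inequality for some $C=C(\kappa,\tau,p)$.

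The technical heart is the Hedberg splitting in the pointwise estimate: it is what produces the Sobolev gain $\hat p>p$ and fixes the dependence of $\hat p$ on the doubling exponent $\kappa$. The truncation step and the geodesic chaining are by now routine, but are exactly what turn the naturally occurring weak-type bound into the stated Poincar\'e inequality with no dilation; the only delicate point there is the bookkeeping of constants, which is simplified here by the normalization $\upsilon(X)=1$.
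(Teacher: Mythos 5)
The paper gives no proof of this statement itself---it is quoted directly from Haj\l asz--Koskela \cite{HK}---and your outline is a faithful reproduction of that cited argument (telescoping pointwise potential estimate, Hedberg splitting against the restricted maximal function to produce the gain $\hat p=\kappa p/(\kappa-p)$, the weak $(1,1)$ maximal bound, Maz'ya truncation to pass from weak to strong type, and geodesic chaining to remove the dilation $3B\to B$), with the exponent arithmetic $(1/p-1/\kappa)^{-1}=\hat p$ and $1+\hat p/\kappa=\hat p/p$ checking out. I see no gap; the only quibble is that replacing $\kappa$ by $\max\{\kappa,p+1\}$ \emph{weakens} (rather than strengthens) the doubling hypothesis, which is of course exactly why that harmless reduction is permitted.
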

\begin{remark}\label{spr}
We can assume that $C$ and $\hat{p}$ as in Theorem \ref{sp} are continuous with respect to the valuables $\kappa, \tau$, and $p$. See \cite[Theorem $1$]{HK} (or \cite[Theorem $4.18$]{He}).

\end{remark}
We end this subsection by introducing the following Colding-Minicozzi's result:
\begin{proposition}\cite[Lemma $3.3$]{cm}\label{cm}
Assume that $X$ is a geodesic space and that $(X, \upsilon)$ satisfies the doubling condition for $\kappa$.
Then, we see that
\[\upsilon (B_r(x) \setminus B_{(1-\delta)r}(x)) \le \Psi(\delta; \kappa)\upsilon (B_r(x))\]
holds for every $r>0$, every $x \in X$, and every $0<\delta<1/2$.
In particular, if $X$ is not a single point, then $\upsilon$ is atomless, i.e., $\upsilon(\{x\})=0$ holds for every $x \in X$.
\end{proposition}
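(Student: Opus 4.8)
The plan is to convert the doubling condition into control on thin shells via a covering built from minimal geodesics issuing from $x$, and then to deduce the estimate either by iterating over the $\sim\delta^{-1}$ disjoint shells inside $B_r(x)$ or, more safely, by a contradiction argument that isolates the qualitative fact that a doubling measure on a geodesic space charges no metric sphere.

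First I would prove the following \emph{one-layer comparison}. Fix $x$, $r>0$ and $0<\delta<1/4$ (larger $\delta$ is harmless). Given $y\in B_r(x)\setminus B_{(1-\delta)r}(x)$, a minimal geodesic from $x$ to $y$ meets $\{w:\overline{x,w}=(1-2\delta)r\}$ at a point $w_y$ with $\overline{w_y,y}=\overline{x,y}-(1-2\delta)r<2\delta r$; picking a maximal $\delta r$-separated set $\{w_i\}\subset\{w:\overline{x,w}=(1-2\delta)r\}$, the balls $B_{3\delta r}(w_i)$ then cover $B_r(x)\setminus B_{(1-\delta)r}(x)$, while the balls $B_{\delta r/2}(w_i)$ are pairwise disjoint and lie in the shell $B_{(1-3\delta/2)r}(x)\setminus B_{(1-5\delta/2)r}(x)$, which is disjoint from $B_r(x)\setminus B_{(1-\delta)r}(x)$ and strictly inside $B_{(1-\delta)r}(x)$. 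Since $\upsilon(B_{3\delta r}(w_i))\le\upsilon(B_{4\delta r}(w_i))\le 2^{3\kappa}\upsilon(B_{\delta r/2}(w_i))$, summing and using disjointness gives
\[\upsilon\bigl(B_r(x)\setminus B_{(1-\delta)r}(x)\bigr)\le 2^{3\kappa}\,\upsilon\bigl(B_{(1-3\delta/2)r}(x)\setminus B_{(1-5\delta/2)r}(x)\bigr).\]

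To finish, the route I would actually pursue is by contradiction. If the proposition failed there would be $\kappa$, $\epsilon_0>0$, spaces $(X_j,\upsilon_j)$, points $x_j$, radii $r_j$, and $\delta_j\downarrow0$ with $\upsilon_j(B_{r_j}(x_j)\setminus B_{(1-\delta_j)r_j}(x_j))>\epsilon_0\,\upsilon_j(B_{r_j}(x_j))$; rescaling $r_j=1$ and normalizing $\upsilon_j(B_2(x_j))=1$, pointed measured Gromov--Hausdorff precompactness of doubling spaces yields a limit $(X_\infty,x_\infty,\upsilon_\infty)$ that is again geodesic, with $\upsilon_\infty$ doubling and $\upsilon_\infty(B_1(x_\infty))$ bounded below by a positive constant, and (with the usual care over open versus closed balls and the semicontinuity of $\upsilon_\infty$) passing to the limit forces $\upsilon_\infty(\{y:\overline{x_\infty,y}=1\})\ge\epsilon_0\,2^{-\kappa}>0$. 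But running the one-layer comparison inside $X_\infty$ around any metric sphere $\{y:\overline{x_\infty,y}=R\}$, with the role of $\delta r$ played by a parameter $\epsilon\downarrow0$, one gets $\upsilon_\infty(\{\,R-\epsilon\le\overline{x_\infty,y}\le R\,\})\le 2^{3\kappa}\,\upsilon_\infty(\{\,R-\tfrac52\epsilon\le\overline{x_\infty,y}\le R-\tfrac32\epsilon\,\})$, and as $\epsilon\downarrow0$ the right-hand side tends to $0$ (the two bounding radii both approach $R$ from below, so the nested sets have empty intersection) while the left-hand side decreases to $\upsilon_\infty(\{y:\overline{x_\infty,y}=R\})$; hence every metric sphere is $\upsilon_\infty$-null, a contradiction. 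A direct, quantitative route --- iterate the one-layer comparison along the $\sim\delta^{-1}$ disjoint shells of width $\delta r$ down to radius $r/2$, then use the reverse doubling inequality $\upsilon(B_\rho(x))\le C(\kappa)(\rho/r)^{\sigma(\kappa)}\upsilon(B_r(x))$ for $\rho\le r/2$ --- should also work, but the hard part there (and the reason I prefer the contradiction argument) is the bookkeeping: one must track how the doubling factors and the overlap multiplicities of the successive shells accumulate over $\sim\delta^{-1}$ layers so that the final bound genuinely tends to $0$ rather than to a $\delta$-independent fraction of $\upsilon(B_r(x))$.

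For the last sentence I would argue off-centre: if $X$ is not a single point, pick $w\ne x$, set $d:=\overline{x,w}>0$ and $r:=d/(1-\delta/2)$ for $0<\delta<1/2$, so that $(1-\delta)r<d<r$ and hence $x\in B_r(w)\setminus B_{(1-\delta)r}(w)$; then $\upsilon(\{x\})\le\Psi(\delta;\kappa)\,\upsilon(B_r(w))\le\Psi(\delta;\kappa)\,\upsilon(X)$, and letting $\delta\to0$ with $\upsilon(X)<\infty$ gives $\upsilon(\{x\})=0$.
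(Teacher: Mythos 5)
Your argument is correct, but it is a genuinely different route from the paper's: the paper offers no proof at all, simply citing Colding--Minicozzi \cite[Lemma 3.3]{cm}, whose argument (like Buckley's ``annular decay'' property) is a direct quantitative covering/iteration that produces an explicit rate $\Psi(\delta;\kappa)=C(\kappa)\delta^{\alpha(\kappa)}$. Your one-layer shell comparison is exactly the kind of covering step those direct proofs are built on, and you are right that the naive iteration of it loses a factor $2^{3\kappa}$ per layer, which is precisely the bookkeeping the cited proof is organized to avoid; your alternative is to extract only the qualitative consequence (a doubling measure on a proper geodesic space charges no metric sphere) and then get the uniform statement by a contradiction/compactness argument. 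That route works: the rescaling and normalization are legitimate, pointed measured Gromov--Hausdorff precompactness of uniformly doubling spaces, the stability of the geodesic and doubling properties in the limit, and the portmanteau-type semicontinuity you invoke are all standard (though each is asserted rather than proved, and together they are much heavier machinery than the two-page direct argument, and they yield no explicit $\Psi$ --- which is all the paper needs, since $\Psi$ is only required to tend to $0$). One small slip: the inner shells $\{R-\tfrac52\epsilon\le \overline{x_\infty,\cdot}\le R-\tfrac32\epsilon\}$ are not nested as $\epsilon\downarrow 0$, but they are contained in the sets $\{R-\tfrac52\epsilon\le \overline{x_\infty,\cdot}< R\}$, which do decrease to $\emptyset$, so their measures still tend to $0$ and your conclusion that every sphere is $\upsilon_\infty$-null stands. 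The off-center choice of annulus in the atomlessness argument is the right trick and is correct as written.
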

\subsubsection{Segment inequality}
Throughout subsection $2.2.2$, we always assume that $X$ is a geodesic space.

For every nonnegative valued Borel function $f$ on $X$ and any $x, y \in X$, let
\[\mathcal{F}_f(x, y) := \inf_{\gamma}\int_{[0, \overline{x, y}]}f(\gamma)ds,\]
where the infimum runs over minimal geodesics $\gamma$ from $x$ to $y$.
We now recall the definition of \textit{the segment inequality (on balls) for $\lambda$} by Cheeger-Colding (see also \cite[Theorem $2.15$]{ch}):
\begin{definition}\cite{ch-co3}
We say that \textit{$(X, \upsilon)$ satisfies the segment inequality for $\lambda$} if 
\[\int_{B_r(x) \times B_r(x)}\mathcal{F}_f(y, z) d(\upsilon \times \upsilon) \le \lambda r \upsilon (B_r(x)) \int_{B_{3r}(x)}fd\upsilon\]
holds for every $x \in X$, every $r>0$, and every nonnegative valued Borel function $f$ on $X$. 
\end{definition} 
In \cite{ch-co3}, Cheeger-Colding proved the following:
\begin{proposition}\cite{ch-co3}\label{seg}
Assume that $(X, \upsilon)$ satisfies the doubling condition for $\kappa$ and that $(X, \upsilon)$ satisfies the segment inequality for $\lambda$.
Then we see that $(X, \upsilon)$ satisfies the $(1, 1)$-Poincar\'e inequality for some $\tau(\kappa, \lambda)>0$.
\end{proposition}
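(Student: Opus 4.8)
The plan is to extract the $(1,1)$-Poincar\'e inequality from the segment inequality in two stages: a direct computation first produces a version of it with a dilated ball in the gradient term, and then this dilation is removed by the standard self-improvement of Poincar\'e inequalities on geodesic doubling spaces. (This is, in essence, the argument of Cheeger-Colding in \cite{ch-co3}.)

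First I would record that on the geodesic space $X$ the pointwise Lipschitz constant $\mathrm{Lip}f$ serves as an upper gradient of any Lipschitz function $f$, namely $|f(y)-f(z)|\le\mathcal{F}_{\mathrm{Lip}f}(y,z)$ for all $y,z\in X$. Indeed, if $\gamma\colon[0,L]\to X$ is a minimal geodesic from $y$ to $z$ parametrized by arc length, then $f\circ\gamma$ is Lipschitz, hence absolutely continuous, and since $\overline{\gamma(s),\gamma(t)}=|s-t|$ one checks that $|(f\circ\gamma)'(t)|\le\mathrm{Lip}f(\gamma(t))$ for almost every $t$; integrating over $[0,L]$ and taking the infimum over all such $\gamma$ gives the assertion.

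Next, for a ball $B:=B_r(x)$ with mean $f_B:=\upsilon(B)^{-1}\int_Bf\,d\upsilon$, the elementary bound
\[\int_B|f-f_B|\,d\upsilon\le\frac{1}{\upsilon(B)}\int_{B\times B}|f(y)-f(z)|\,d(\upsilon\times\upsilon)\le\frac{1}{\upsilon(B)}\int_{B\times B}\mathcal{F}_{\mathrm{Lip}f}(y,z)\,d(\upsilon\times\upsilon),\]
together with the segment inequality applied to the nonnegative Borel function $\mathrm{Lip}f$, gives
\[\frac{1}{\upsilon(B_r(x))}\int_{B_r(x)}|f-f_{B_r(x)}|\,d\upsilon\le\lambda r\,\frac{1}{\upsilon(B_r(x))}\int_{B_{3r}(x)}\mathrm{Lip}f\,d\upsilon.\]
Since the doubling condition yields $\upsilon(B_{3r}(x))\le 2^{2\kappa}\upsilon(B_r(x))$, the right-hand side is at most $2^{2\kappa}\lambda r$ times the average of $\mathrm{Lip}f$ over $B_{3r}(x)$; that is, $(X,\upsilon)$ satisfies a $(1,1)$-Poincar\'e inequality of weak type, with the ball dilated by $3$ in the gradient term and with Poincar\'e constant $C(\kappa)\lambda$.

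Finally I would invoke the fact that on a geodesic metric measure space with the doubling condition for $\kappa$, such a weak $(1,1)$-Poincar\'e inequality (with a fixed dilation factor $\sigma$ and constant $c$) self-improves to the $(1,1)$-Poincar\'e inequality for some $\tau$ in the sense of the definition above, with $\tau$ depending only on $\kappa$, $\sigma$, and $c$; see for instance \cite{He} (or \cite{HK}). Applied with $\sigma=3$ and $c=C(\kappa)\lambda$, this gives $\tau=\tau(\kappa,\lambda)$, as claimed. The only ingredient here that is not a routine manipulation is this last one — the Whitney-type chaining argument needed to remove the dilation — but it is entirely standard and uses nothing about Ricci curvature, so I would simply quote it.
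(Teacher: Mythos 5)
Your argument is correct and is essentially the proof the paper points to: the paper itself gives no proof but refers to Section~2 of \cite{ch-co3}, where the segment inequality is combined with the fact that $\mathrm{Lip}f$ is an upper gradient to produce a $(1,1)$-Poincar\'e inequality with a dilated ball, which is then tightened by the standard self-improvement on geodesic doubling spaces. Your telescoping of these same three steps, with $\tau$ depending only on $\kappa$ and $\lambda$, matches that route, so there is nothing to add.
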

See Section $2$ in \cite{ch-co3} for the proof.

The following proposition will be used in the proof of Theorem \ref{mthm}:
\begin{proposition}\label{lipinf}
Under the same assumptions as in Proposition \ref{seg}, we see that 
\[\mathbf{Lip}f=||\mathrm{Lip}f||_{L^{\infty}(X)}\]
holds for every Lipschitz function $f$ on $X$.
\end{proposition}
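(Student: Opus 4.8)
The plan is to prove the two inequalities $\mathbf{Lip}f \le \|\mathrm{Lip}f\|_{L^{\infty}(X)}$ and $\|\mathrm{Lip}f\|_{L^{\infty}(X)} \le \mathbf{Lip}f$ separately. The second one is immediate from the definitions: for any non-isolated $x$ and any $y$ near $x$ we have $|f(x)-f(y)| \le \mathbf{Lip}f \cdot \overline{x,y}$, so $\mathrm{Lip}f(x) \le \mathbf{Lip}f$ for every $x$ (the isolated case is trivial since then $\mathrm{Lip}f(x)=0$), hence $\|\mathrm{Lip}f\|_{L^{\infty}(X)} \le \mathbf{Lip}f$. This direction needs no hypotheses at all.

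For the reverse inequality I would use the geodesic structure together with the segment inequality. Fix $x, y \in X$ with $x \neq y$ and set $L := \|\mathrm{Lip}f\|_{L^{\infty}(X)}$. The natural first attempt is: take a minimal geodesic $\gamma$ from $x$ to $y$, and bound $|f(x)-f(y)|$ by integrating $\mathrm{Lip}f$ along $\gamma$, which would give $|f(x)-f(y)| \le \int_0^{\overline{x,y}} \mathrm{Lip}f(\gamma(s))\,ds \le L\,\overline{x,y}$ provided $\mathrm{Lip}f(\gamma(s)) \le L$ for a.e.\ $s$. The subtlety is that $\mathrm{Lip}f \le L$ holds only $\upsilon$-a.e.\ on $X$, and a single geodesic is $\upsilon$-null, so there is no reason the bad set avoids $\gamma$. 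This is exactly where the segment inequality enters, and it is the main obstacle to handle carefully. The device is to work with balls and average over pairs of points: let $g$ be a Borel function with $g = \mathrm{Lip}f$ $\upsilon$-a.e.\ and $g \le L$ everywhere (e.g.\ $g := \min\{\mathrm{Lip}f, L\}$, which still satisfies $|f(a)-f(b)| \le \mathcal{F}_g(a,b)$ for points $a,b$ in a full-measure set, using that $\mathrm{Lip}f$ is an upper gradient in the appropriate a.e.\ sense — more precisely, for $\upsilon$-a.e.\ pair $(a,b)$ one has $|f(a)-f(b)| \le \mathcal{F}_{\mathrm{Lip}f}(a,b) = \mathcal{F}_g(a,b)$, since $\mathrm{Lip}f$ is genuinely an upper gradient of the Lipschitz function $f$ along every geodesic).

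Concretely, fix $x$ and a small radius $r$, and consider $a, b$ ranging over $B_r(x)$. For $\upsilon \times \upsilon$-a.e.\ pair $(a,b) \in B_r(x) \times B_r(x)$ we have $|f(a)-f(b)| \le \mathcal{F}_g(a,b) \le \mathcal{F}_L(a,b) = L\,\overline{a,b} \le 2Lr$; but to get the pointwise Lipschitz bound at $x$ I instead integrate the segment inequality:
\[
\int_{B_r(x)\times B_r(x)} |f(a)-f(b)|\, d(\upsilon\times\upsilon) \le \int_{B_r(x)\times B_r(x)} \mathcal{F}_g(a,b)\, d(\upsilon\times\upsilon) \le \lambda r\, \upsilon(B_r(x)) \int_{B_{3r}(x)} g\, d\upsilon \le \lambda r\, \upsilon(B_r(x))\, \upsilon(B_{3r}(x))\, L.
\]
Dividing and using the doubling condition to compare $\upsilon(B_{3r}(x))$ with $\upsilon(B_r(x))$, one obtains that the average of $|f(a)-f(b)|$ over $B_r(x)^2$ is at most $C(\kappa)\lambda r L$. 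Now pick $y \ne x$; applying this with $x$ replaced by a point on the geodesic from $x$ to $y$ and $r$ chosen appropriately, or more cleanly applying the $(1,1)$-Poincaré inequality from Proposition \ref{seg} to $f$ on balls and letting the centers run along the geodesic, one deduces $|f(x')-f(y')| \le C L \overline{x',y'}$ for points $x',y'$ in full-measure subsets of neighborhoods of $x,y$; letting $x' \to x$, $y' \to y$ along such sets and using continuity of $f$ gives $|f(x)-f(y)| \le C L\,\overline{x,y}$. Finally, to remove the constant $C$, apply this bound along a chain of $N$ points $x = x_0, x_1, \dots, x_N = y$ equally spaced on a minimal geodesic: each step contributes at most (essentially) $L \cdot \overline{x,y}/N$ plus a lower-order error that vanishes as $N \to \infty$, because on each short segment the relevant averages of $\mathrm{Lip}f$ converge to the Lebesgue value $\le L$; summing and letting $N \to \infty$ yields $|f(x)-f(y)| \le L\,\overline{x,y}$, hence $\mathbf{Lip}f \le L$. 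The chaining step is what upgrades the Poincaré-type estimate (which inevitably carries a dimensional constant) to the sharp inequality, and getting the error terms in the chaining to vanish uniformly — using Proposition \ref{cm} to control the measure of thin annuli and the Lebesgue differentiation theorem of Remark \ref{op}(2) — is the technical heart of the argument.
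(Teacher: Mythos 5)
There is a genuine gap, and it sits exactly at the point your own remarks identify as the crux. The claim you need is that for $\upsilon\times\upsilon$-a.e.\ pair $(a,b)$ one has $|f(a)-f(b)|\le \mathcal{F}_g(a,b)$ with $g:=\min\{\mathrm{Lip}f, L\}$, but your justification --- ``$\mathcal{F}_{\mathrm{Lip}f}(a,b)=\mathcal{F}_g(a,b)$ since $\mathrm{Lip}f$ is an upper gradient'' --- is false as stated: $g$ differs from $\mathrm{Lip}f$ only on a $\upsilon$-null set, but every individual geodesic is itself $\upsilon$-null, so modifying the integrand on a null set can change the line integrals $\int_\gamma$ (and hence $\mathcal{F}$) arbitrarily; this is precisely the subtlety you flagged two sentences earlier, and asserting the a.e.-pair inequality is assuming the hard part. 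Note also that if that inequality \emph{were} justified, you would be done on the spot: $\mathcal{F}_g(a,b)\le L\,\overline{a,b}$ since $g\le L$ everywhere, so $|f(a)-f(b)|\le L\,\overline{a,b}$ on a dense set of pairs, and continuity of $f$ gives $\mathbf{Lip}f\le L$ --- the entire averaged/Poincar\'e/chaining superstructure would be unnecessary.

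The route you actually carry out cannot recover the sharp constant. The integrated segment inequality plus telescoping does give $|f(a)-f(b)|\le C(\kappa,\lambda)\,L\,\overline{a,b}$ for a.e.\ (hence, by continuity, all) pairs, but the constant $C(\kappa,\lambda)>1$ is scale-invariant: applying the same estimate on each of the $N$ subsegments of a minimal geodesic gives $|f(x_i)-f(x_{i+1})|\le C L\,\overline{x_i,x_{i+1}}$ at every scale, and summing reproduces $C L\,\overline{x,y}$; the Lebesgue-point/thin-annuli considerations you invoke control the averages of $\mathrm{Lip}f$, not the multiplicative constant in the Poincar\'e-type estimate, so the ``lower-order error'' does not absorb $C-1$. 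Indeed, if chaining could upgrade $C$ to $1$, the conclusion would follow from doubling plus the $(1,1)$-Poincar\'e inequality alone, whereas the point of the hypothesis is the stronger segment inequality. The paper's proof uses it in an essentially different way: apply the segment inequality to the indicator function $1_{X\setminus A}$ of the null set where $\mathrm{Lip}f>L$; this produces, for a.e.\ pair $(x,y)$ and every $\epsilon>0$, a minimal geodesic meeting $X\setminus A$ in length $<\epsilon$, and integrating the upper gradient $\mathrm{Lip}f$ along that geodesic gives $|f(x)-f(y)|\le L\,\overline{x,y}+\epsilon\,\mathbf{Lip}f$; letting $\epsilon\to 0$ and using density of such pairs together with continuity of $f$ yields $\mathbf{Lip}f\le L$ with constant exactly $1$. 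Your proposal never uses the segment inequality in this ``almost avoiding a prescribed null set'' form, which is the missing idea.
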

\begin{proof}
It suffices to check 
\begin{equation}\label{suf}
\mathbf{Lip}f \le ||\mathrm{Lip}f||_{L^{\infty}(X)}.
\end{equation}
There exists a Borel subset $A$ of $X$ such that $\upsilon(X \setminus A)=0$ holds and that $\mathrm{Lip}f \le ||\mathrm{Lip}f||_{L^{\infty}(X)}$ holds on $A$.
By applying the segment inequality to the indicator function $1_{X \setminus A}$ of $X \setminus A$, there exists a Borel subset $V$ of $X \times X$ such that $(\upsilon \times \upsilon ) \left((X \times X) \setminus V\right)=0$ holds and that for every $(x, y) \in V$ and every $\epsilon >0$, there exists a minimal geodesic $\gamma$ from $x$ to $y$ such that 
\[\int_{[0, \overline{x, y}]}1_{X \setminus A}(\gamma (s))ds<\epsilon\]
holds.
Therefore, since $\mathrm{Lip}f$ is an upper gradient of $f$ (see \cite{ch1, heko} for the definition), we have
\begin{align}\label{21212121}
|f(x)-f(y)| &\le \int_{[0, \overline{x, y}]}\mathrm{Lip}f(\gamma (s))ds \nonumber \\
&= \int_{[0, \overline{x, y}]} 1_{A}(\gamma (s))\mathrm{Lip}f(\gamma (s))ds + \int_{[0, \overline{x, y}]} 1_{X \setminus A}(\gamma (s))\mathrm{Lip}f(\gamma (s))ds \nonumber \\
&\le ||\mathrm{Lip}f||_{L^{\infty}(X)}\overline{x, y} + \mathbf{Lip}f \epsilon.
\end{align}
Since $\epsilon$ is arbitrary and $V$ is dense in $X \times X$, (\ref{21212121}) yields (\ref{suf}).
\end{proof}
\subsubsection{First eigenvalue of $p$-Laplacian}
For every $1 \le p <\infty$ and every $f \in L^p(X)$, let
\begin{align*}\label{uuyytt}c_p(f):=\inf_{c \in \mathbf{R}}\left(\int_X|f-c|^pd\upsilon \right)^{1/p}.
\end{align*}
We omit the proof of next proposition because it is easy to check it. 
\begin{proposition}\label{1lip}
We have the following:
\begin{enumerate}
\item The function $f \mapsto C_p(f)$ on $L^p(X)$ is $1$-Lipschitz.
\item $c_p(f+k)=c_p(f)$ and $c_p(kf)=|k|c_p(f)$ hold for every $f \in L^p(X)$ and every $k \in \mathbf{R}$.
\item For any $1 \le p \le \hat{p}<\infty$ and every $f \in L^{\hat{p}}(X)$, we have $c_p(f) \le c_{\hat{p}}(f) \le ||f||_{L^{\hat{p}}(X)}$.
\end{enumerate}
\end{proposition}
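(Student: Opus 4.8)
The plan is to read $c_p(f)=\inf_{c\in\mathbf{R}}\|f-c\|_{L^p(X)}$ as the distance in $L^p(X)$ from $f$ to the one-dimensional subspace of constant functions (well-defined since $\upsilon$ is a probability measure, so constants lie in $L^p(X)$), and to deduce all three assertions from the triangle inequality, a change of variables in the defining infimum, and H\"older's inequality.

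For (1), I would fix $f,g\in L^p(X)$. For every $c\in\mathbf{R}$ the triangle inequality in $L^p(X)$ gives $\|f-c\|_{L^p(X)}\le \|f-g\|_{L^p(X)}+\|g-c\|_{L^p(X)}$; taking the infimum over $c$ on both sides yields $c_p(f)\le \|f-g\|_{L^p(X)}+c_p(g)$. Exchanging the roles of $f$ and $g$ gives $|c_p(f)-c_p(g)|\le\|f-g\|_{L^p(X)}$, which is the claimed $1$-Lipschitz property of $c_p$.

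For (2), translation invariance is just the substitution $c\mapsto c-k$ in $c_p(f+k)=\inf_{c}\|f-(c-k)\|_{L^p(X)}$, the new parameter ranging over the same set $\mathbf{R}$. Positive homogeneity is immediate for $k=0$ since $c_p(0)=0$, and for $k\neq 0$ it follows from $\|kf-c\|_{L^p(X)}=|k|\,\|f-c/k\|_{L^p(X)}$ together with the substitution $c\mapsto c/k$. For (3), since $\upsilon$ is a Borel probability measure, H\"older's inequality gives $\|h\|_{L^p(X)}\le\|h\|_{L^{\hat p}(X)}$ whenever $1\le p\le\hat p<\infty$ and $h\in L^{\hat p}(X)$; applying this with $h=f-c$ and taking the infimum over $c$ gives $c_p(f)\le c_{\hat p}(f)$, while the choice $c=0$ in the infimum defining $c_{\hat p}(f)$ gives $c_{\hat p}(f)\le\|f\|_{L^{\hat p}(X)}$.

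I do not anticipate a genuine obstacle here: each item reduces to an elementary manipulation of infima and $L^p$-norms. The only point one might pause on is whether the infimum defining $c_p(f)$ is actually attained — it is, by uniform convexity of $L^p(X)$ for $1<p<\infty$ and by a direct compactness/lower semicontinuity argument for $p=1$ — but attainment plays no role in any of the three statements, so I would not include it in the proof.
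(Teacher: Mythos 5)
Your proof is correct: each of the three items follows exactly as you argue, from the triangle inequality in $L^p(X)$, a change of variables in the infimum, and H\"older's inequality on the probability space $(X,\upsilon)$. The paper itself omits the proof as ``easy to check,'' and your argument is precisely the elementary verification the author has in mind, so there is nothing to compare beyond noting that your remark on attainment of the infimum is indeed unnecessary here (attainment is only addressed later, in Lemma \ref{char}).
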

In order to give another formulation of $\lambda_{1, p}(X)$, we introduce the following lemma by Wu-Wang-Zheng given in \cite{WWZ}.
\begin{lemma}\cite[Lemma $2.2$]{WWZ}\label{char}
Let $1<p<\infty$ and let $f \in L^p(X)$.
Then, for a number $t \in \mathbf{R}$, the following two conditions are equivalent:
\begin{enumerate}
\item 
\[\left(\int_X|f-t|^pd\upsilon\right)^{1/p}=c_p(f),\]
\item \[\int_X|f-t|^{p-2}(f-t)d\upsilon=0.\]
\end{enumerate}
Moreover, there exists a unique $s_0 \in \mathbf{R}$ such that 
\[\left(\int_X |f-s_0|^pd\upsilon \right)^{1/p}=c_p(f)\]
holds.
We denote by $a_p(f)$ $s_0$.
\end{lemma}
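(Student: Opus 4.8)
The plan is to reduce everything to elementary properties of the single-variable function $\phi(t):=\int_X|f-t|^p\,d\upsilon$, whose infimum over $t\in\mathbf{R}$ is $c_p(f)^p$ by definition. Note first that $\phi(t)<\infty$ for every $t$, since $||f-t||_{L^p(X)}\le ||f||_{L^p(X)}+|t|$ and $\upsilon$ is a probability measure. I claim $\phi$ is strictly convex, coercive, and of class $C^1$; granting this, both stated conditions and the uniqueness assertion follow at once.

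First I would check strict convexity. The map $x\mapsto|x|^p$ is strictly convex on $\mathbf{R}$ for $p>1$, hence so is $t\mapsto|f(x)-t|^p$ for each fixed $x$ (it is the composition with the affine bijection $t\mapsto f(x)-t$), and integration against the probability measure $\upsilon$ preserves strict convexity. Next, continuity and coercivity: the reverse triangle inequality gives $|\phi(t)^{1/p}-\phi(s)^{1/p}|\le|t-s|$, so $\phi$ is continuous, while $\phi(t)^{1/p}=||f-t||_{L^p(X)}\ge|t|-||f||_{L^p(X)}\to\infty$ as $|t|\to\infty$. Therefore $\phi$ attains its infimum at some $s_0$, and $s_0$ is unique by strict convexity; this is precisely the ``moreover'' part, with $a_p(f):=s_0$, because $\phi(s_0)=\inf\phi=c_p(f)^p$.

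Then I would prove $\phi\in C^1(\mathbf{R})$ with $\phi'(t)=-p\int_X|f-t|^{p-2}(f-t)\,d\upsilon$. For fixed $a\in\mathbf{R}$ the function $t\mapsto|a-t|^p$ is $C^1$ with derivative $-p|a-t|^{p-2}(a-t)$ (read as $0$ at $t=a$), and by the mean value theorem its difference quotients over $t,s$ ranging in a bounded interval $[-M,M]$ are dominated in absolute value by $p(|a|+M)^{p-1}$, which is $\upsilon$-integrable since $f\in L^p(X)$, $p-1<p$, and $\upsilon$ is a probability measure; hence differentiation under the integral sign is legitimate. Since a convex $C^1$ function has a critical point exactly at its global minimizer, we obtain for $t\in\mathbf{R}$ the chain: condition $(2)$ $\iff$ $\phi'(t)=0$ $\iff$ $t$ minimizes $\phi$ $\iff$ $\phi(t)=c_p(f)^p$ $\iff$ condition $(1)$.

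The hard part, and really the only point needing care, is the domination estimate for differentiation under the integral when $1<p<2$, where $|f-t|^{p-2}$ is unbounded near $\{f=t\}$: this is resolved by noting that $\big||f-t|^{p-2}(f-t)\big|=|f-t|^{p-1}$ is genuinely integrable, so no actual singularity arises. The remaining verifications are routine.
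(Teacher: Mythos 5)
Your argument is correct. Note that the paper itself offers no proof of this lemma; it is quoted verbatim from the reference \cite[Lemma $2.2$]{WWZ}, and the standard proof there is exactly the one you give: $\phi(t)=\int_X|f-t|^p\,d\upsilon$ is finite, coercive, strictly convex (strict convexity of $|x|^p$ for $p>1$ survives integration against the nonzero measure $\upsilon$), and differentiable with $\phi'(t)=-p\int_X|f-t|^{p-2}(f-t)\,d\upsilon$, so the unique minimizer is characterized by the vanishing of this integral. Your handling of the two delicate points — the domination of the difference quotients by $p(|f|+M)^{p-1}\in L^1(\upsilon)$ (using that $\upsilon$ is a probability measure, so $L^p\subset L^{p-1}$), and the observation that $|f-t|^{p-2}(f-t)$ means $\mathrm{sgn}(f-t)|f-t|^{p-1}$ and so causes no singularity when $1<p<2$ — is exactly what is needed, and the equivalence of (1) and (2) then follows since a differentiable convex function on $\mathbf{R}$ is minimized precisely at its critical points.
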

The following formulation of $\lambda_{1, p}(X)$ is necessary to prove Theorem \ref{mthm}:
\begin{corollary}\label{wahaha}
For every $1<p<\infty$, we have
\[\lambda_{1, p}(X)=\inf_f \int_X(\mathrm{Lip}f)^pd\upsilon,\]
where the infimum runs over Lipschitz functions $f$ on $X$ with $c_p(f)=||f||_{L^p}=1$. 
\end{corollary}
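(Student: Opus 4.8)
The plan is to show that the two families of admissible test functions appearing in the two formulations of $\lambda_{1,p}(X)$ literally coincide, so that the infima of $\int_X(\mathrm{Lip}f)^pd\upsilon$ over them agree term by term. First I would note that every Lipschitz function on the compact metric measure space $(X,\upsilon)$ is bounded, hence lies in $L^p(X)$ (as $\upsilon$ is a probability measure), so that Lemma \ref{char} applies to it. Then I apply Lemma \ref{char} with $t=0$: for a Lipschitz function $f$ on $X$, the equation $\int_X|f|^{p-2}fd\upsilon=0$ holds if and only if $\left(\int_X|f|^pd\upsilon\right)^{1/p}=c_p(f)$, i.e.\ if and only if $\|f\|_{L^p}=c_p(f)$.

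Combining this equivalence with the normalization $\int_X|f|^pd\upsilon=1$, a Lipschitz function $f$ satisfies both constraints in the definition of $\lambda_{1,p}(X)$ (namely $\|f\|_{L^p}=1$ and $\int_X|f|^{p-2}fd\upsilon=0$) if and only if it satisfies $c_p(f)=\|f\|_{L^p}=1$. Since the functional being minimized, $\int_X(\mathrm{Lip}f)^pd\upsilon$, is the same in both formulations, the two infima are equal, which is the assertion. In the degenerate case where $X$ is a single point there is no admissible $f$ in either formulation, so both sides equal $+\infty$ by the stated convention and the identity persists.

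There is essentially no obstacle here; the statement is a reformulation of Lemma \ref{char} specialized to $t=0$, together with the trivial observation that Lipschitz functions are valid $L^p$ test functions. If one prefers an argument not explicitly invoking Lemma \ref{char}, the same conclusion follows from the strict convexity of $c\mapsto\int_X|f-c|^pd\upsilon$ for $1<p<\infty$, whose derivative at $c$ is a negative constant times $\int_X|f-c|^{p-2}(f-c)d\upsilon$; vanishing of this derivative at $c=0$ identifies $0$ as the unique minimizer, i.e.\ $\|f\|_{L^p}=c_p(f)=a_p(f)+\text{(nothing)}$, yielding the required matching of the constraint sets. Either way the proof is short.
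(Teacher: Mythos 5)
Your proof is correct and takes essentially the same route as the paper, whose own proof simply cites Lemma \ref{char} (together with Proposition \ref{1lip}): applying Lemma \ref{char} with $t=0$ shows that the two constraint sets coincide verbatim, so the two infima agree term by term. The only cosmetic difference is that, as your argument makes clear, Proposition \ref{1lip} is not actually needed for the statement as written.
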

\begin{proof}
This is a direct consequence of Proposition \ref{1lip} and Lemma \ref{char}.
\end{proof}
\subsubsection{Cheeger constant}
For every $f \in L^1(X)$, we say that a number \textit{$M_f \in \mathbf{R}$ is a median of $f$} if 
\[\upsilon \left(f \ge M_f\right) \ge \frac{1}{2}\]
and 
\[\upsilon \left(f \le M_f\right) \ge \frac{1}{2}\]
hold.
It is easy to check the following (see also Section VI in \cite{chavel}):
\begin{lemma}\label{665}
For every $f \in L^1(X)$, there exists a median of $f$.
Moreover, we see that
\[\int_X|f-M_f|d\upsilon=c_1(f)\]
holds for every median $M_f$ of $f$ 
(thus, we often denote by $a_1(f)$ a median $M_f$ of $f$ in this paper).
\end{lemma}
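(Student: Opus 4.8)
The plan is to prove the two assertions in turn: existence of a median via the distribution function of $f$, and then the identity $\int_X|f-M_f|\,d\upsilon=c_1(f)$ by a direct pointwise estimate that works uniformly over all medians.

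First I would establish existence. Consider the distribution function $G(t):=\upsilon(f\le t)$, which is nondecreasing, right-continuous, and satisfies $\lim_{t\to-\infty}G(t)=0$ and $\lim_{t\to+\infty}G(t)=1$ (here we use that $\upsilon$ is a probability measure and that $f\in L^1(X)$ is finite $\upsilon$-a.e.). Put $M_f:=\inf\{t\in\mathbf{R};G(t)\ge 1/2\}$, which is a finite real number. Right-continuity of $G$ gives $\upsilon(f\le M_f)=G(M_f)\ge 1/2$, and since $\{f<M_f\}$ is the increasing union of the sets $\{f\le t\}$ with $t<M_f$, we get $\upsilon(f<M_f)=\sup_{t<M_f}G(t)\le 1/2$, hence $\upsilon(f\ge M_f)=1-\upsilon(f<M_f)\ge 1/2$. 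Thus $M_f$ is a median.

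Next I would prove the optimality, for an \emph{arbitrary} median $M_f$ (not only the one just constructed). Fix $c>M_f$. On $\{f\le M_f\}$ one has $|f-c|-|f-M_f|=(c-f)-(M_f-f)=c-M_f$ pointwise, while on $\{f>M_f\}$ the triangle inequality $|f-c|\ge|f-M_f|-|c-M_f|$ gives $|f-c|-|f-M_f|\ge-(c-M_f)$. Integrating these two estimates and using $\upsilon(f>M_f)=1-\upsilon(f\le M_f)\le 1/2\le\upsilon(f\le M_f)$ yields
\[\int_X|f-c|\,d\upsilon-\int_X|f-M_f|\,d\upsilon\ge(c-M_f)\bigl(\upsilon(f\le M_f)-\upsilon(f>M_f)\bigr)\ge 0.\]
The symmetric argument for $c<M_f$ — splitting along $\{f\ge M_f\}$ and $\{f<M_f\}$ and using $\upsilon(f\ge M_f)\ge 1/2$ — gives the same inequality. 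Hence $\int_X|f-M_f|\,d\upsilon\le\int_X|f-c|\,d\upsilon$ for every $c\in\mathbf{R}$, and taking the infimum over $c$ we obtain $\int_X|f-M_f|\,d\upsilon=c_1(f)$. (All these integrals are finite because $f\in L^1(X)$ and $\upsilon(X)=1$.)

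I do not expect a genuine obstacle here: the statement is elementary. The one point that needs care is that the identity is asserted for \emph{every} median, so the proof must not rely on a specific choice of $M_f$; this is why I would use the pointwise split above, which invokes only the two defining inequalities of a median, rather than the alternative route through the $\upsilon$-a.e. derivative $\upsilon(f<c)-\upsilon(f>c)$ of the convex function $c\mapsto\int_X|f-c|\,d\upsilon$ (though that route works too and additionally identifies the set of minimizers with the set of medians).
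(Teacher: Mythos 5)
Your proof is correct. The paper itself gives no argument for this lemma --- it is stated with ``It is easy to check the following'' and a pointer to Section VI of Chavel --- and your two steps (existence via the right-continuous distribution function $G(t)=\upsilon(f\le t)$, optimality via the pointwise split on $\{f\le M_f\}$ and $\{f>M_f\}$ using only the two defining inequalities of a median) constitute exactly the standard argument such a reference supplies, including the care needed to cover \emph{every} median rather than one particular choice.
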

\begin{proposition}\label{char1}
Assume that $\upsilon$ is atomless. Then, we have 
\[h(X)= \inf_{f} \int_X \mathrm{Lip}fd\upsilon,\]
where the infimum runs over Lipschitz functions $f$ on $X$ with $c_1(f)=||f||_{L^1}=1$.
\end{proposition}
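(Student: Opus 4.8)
The plan is to reduce the statement to a scale-invariant form and then prove two opposite inequalities. Since $c_1(f+k)=c_1(f)$ and $c_1(kf)=|k|c_1(f)$ by Proposition~\ref{1lip}, and $\|f-a_1(f)\|_{L^1}=c_1(f)$ by Lemma~\ref{665}, the substitution $f\mapsto (f-a_1(f))/c_1(f)$ identifies
\[
\inf\Bigl\{\int_X \mathrm{Lip}f\,d\upsilon : f\ \text{Lipschitz},\ c_1(f)=\|f\|_{L^1}=1\Bigr\}=\inf\Bigl\{\frac{\int_X \mathrm{Lip}f\,d\upsilon}{c_1(f)} : f\ \text{Lipschitz},\ c_1(f)>0\Bigr\}.
\]
Hence it suffices to show that the right-hand side equals $h(X)$, which I would do by proving ``$h(X)\le$'' and ``$h(X)\ge$'' separately.

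For ``$h(X)\le$'', fix a Lipschitz $f$ with $c_1(f)>0$; subtracting a median we may assume $0$ is a median of $f$, so that $c_1(f)=\|f\|_{L^1}=\int_X|f|\,d\upsilon$ and $\upsilon(\{f>0\}),\upsilon(\{f<0\})\le 1/2$. The key device is a soft truncation: for $\delta>0$ set $g_\delta^{+}:=(f-\delta)_{+}\ge 0$ and $g_\delta^{-}:=(-f-\delta)_{+}\ge 0$. Since $s\mapsto (s-\delta)_{+}$ is $1$-Lipschitz we have $\mathrm{Lip}g_\delta^{\pm}\le \mathrm{Lip}f$ pointwise, and since $\overline{\{f>\delta\}}\cap\overline{\{f<-\delta\}}=\emptyset$, at least one of $g_\delta^{+},g_\delta^{-}$ vanishes identically near any given point; therefore $\mathrm{Lip}g_\delta^{+}+\mathrm{Lip}g_\delta^{-}\le \mathrm{Lip}f$ pointwise. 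Next I would establish the co-area inequality
\[
\int_X \mathrm{Lip}h\,d\upsilon\ \ge\ \int_0^{\infty}\upsilon^{+}(\{h>t\})\,dt\qquad (h\ge 0\ \text{Lipschitz}),
\]
using the identity $B_r(\{h>t\})=\{\,\sup_{B_r(\cdot)}h>t\,\}$ (so $\sup_{B_r(\cdot)}h$ is lower semicontinuous), the layer-cake formula to rewrite $\int_X(\sup_{B_r(\cdot)}h-h)\,d\upsilon$ as $\int_0^{\infty}(\upsilon(B_r(\{h>t\}))-\upsilon(\{h>t\}))\,dt$, Fatou's lemma in the variable $t$, and the elementary bound $\limsup_{r\to 0}r^{-1}(\sup_{B_r(x)}h-h(x))\le \mathrm{Lip}h(x)$ (dominated by the constant $\mathbf{Lip}h$). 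Applying this to $g_\delta^{+}$ and $g_\delta^{-}$, and using that $\{f>s\}$ and $\{f<-s\}$ are admissible for $h(X)$ whenever $s\ge\delta>0$ (so $\upsilon^{+}(\{f>s\})\ge h(X)\upsilon(\{f>s\})$, and likewise on the other side), I obtain
\[
\int_X \mathrm{Lip}f\,d\upsilon\ \ge\ \int_X(\mathrm{Lip}g_\delta^{+}+\mathrm{Lip}g_\delta^{-})\,d\upsilon\ \ge\ h(X)\Bigl(\int_\delta^{\infty}\upsilon(\{f>s\})\,ds+\int_\delta^{\infty}\upsilon(\{f<-s\})\,ds\Bigr),
\]
and letting $\delta\downarrow 0$ the right-hand side increases to $h(X)\int_X|f|\,d\upsilon=h(X)\,c_1(f)$.

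For ``$h(X)\ge$'', fix a Borel set $A$ with $0<\upsilon(A)\le 1/2$; we may assume $\upsilon^{+}(A)<\infty$, which forces $\upsilon(\overline A)=\upsilon(A)$ (otherwise $\upsilon(B_r(A))-\upsilon(A)\ge\upsilon(\overline A)-\upsilon(A)>0$ for all $r$, so $\upsilon^{+}(A)=\infty$). Choose radii $r_i\downarrow 0$ realizing the liminf defining $\upsilon^{+}(A)$ with each $r_i$ a continuity point of the nondecreasing function $r\mapsto\upsilon(B_r(A))$ (as such points are co-countable and this function is left-continuous, the liminf is attained along them), and pick $0<s_i<r_i$ with $s_i/r_i\to 0$. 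Consider the trapezoidal functions $f_i:=\min\{(r_i-\mathrm{dist}(\cdot,A))_{+}/(r_i-s_i),\,1\}$, which are $(r_i-s_i)^{-1}$-Lipschitz, equal to $1$ on the open neighborhood $B_{s_i}(A)\supseteq A$ and equal to $0$ on $X\setminus B_{r_i}(A)$; then $\mathrm{Lip}f_i$ vanishes off $\{s_i\le\mathrm{dist}(\cdot,A)\le r_i\}$, so $\int_X\mathrm{Lip}f_i\,d\upsilon\le (r_i-s_i)^{-1}\bigl(\upsilon(B_{r_i}(A))-\upsilon(B_{s_i}(A))\bigr)\le(r_i-s_i)^{-1}\bigl(\upsilon(B_{r_i}(A))-\upsilon(A)\bigr)$, while a short case analysis of $c\mapsto \int_X|f_i-c|\,d\upsilon$ gives $c_1(f_i)\ge\min\{\upsilon(A),\,1-\upsilon(B_{r_i}(A))\}$, which tends to $\upsilon(A)$ since $\upsilon(B_{r_i}(A))\to\upsilon(\overline A)=\upsilon(A)\le 1/2$. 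Consequently $\limsup_i \bigl(\int_X\mathrm{Lip}f_i\,d\upsilon\bigr)/c_1(f_i)\le \upsilon^{+}(A)/\upsilon(A)$, and taking the infimum over all such $A$ completes this direction.

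The step I expect to be the main obstacle is ``$h(X)\le$'', for two reasons. First, the co-area inequality is the analytic heart: the naive inclusion $B_r(\{h>t\})\subseteq\{h>t-r\,\mathbf{Lip}h\}$ only yields the weaker $\int_0^{\infty}\upsilon^{+}(\{h>t\})\,dt\le \mathbf{Lip}h$, so one genuinely needs the localized estimate on $r^{-1}(\sup_{B_r(\cdot)}h-h)$ together with Fatou. Second, reaching the \emph{sharp} constant is delicate: splitting $f$ into $f_{+}$ and $f_{-}$ directly would lose a factor $2$, because the pointwise Lipschitz constants of $f_{+}$ and $f_{-}$ can both be positive on $\{f=0\}$; the soft truncation with $\delta>0$ circumvents this by forcing $g_\delta^{+}$ and $g_\delta^{-}$ to have disjoint supports, and the limit $\delta\downarrow 0$ then recovers the constant $1$. (If one insisted on a fixed median, one would instead need $\mathrm{Lip}f=0$ $\upsilon$-a.e.\ on $\{f=a_1(f)\}$, and it is there that the atomlessness of $\upsilon$ would enter; the route above sidesteps this.) A minor technical point is the selection of the radii $r_i$ in the ``$h(X)\ge$'' direction.
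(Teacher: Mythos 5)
Your proposal is correct, and it is essentially a self-contained proof of the classical equivalence that the paper does not prove but instead delegates to \cite[Lemma 2.2]{mil} (and to \cite{bh, fed-fl, ma3}): the normalization step via Proposition \ref{1lip} and Lemma \ref{665} is exactly the paper's first reduction, and the two inequalities you prove constitute the Federer--Fleming/Maz'ja/Bobkov--Houdr\'e co-area argument that underlies the cited lemma. The two technical devices you flag are indeed the right ones and are handled correctly: the $\delta$-truncation $g_\delta^{\pm}=(\pm f-\delta)_+$ forces the closed sets $\overline{\{f>\delta\}}$ and $\overline{\{f<-\delta\}}$ to be disjoint, so $\mathrm{Lip}g_\delta^{+}+\mathrm{Lip}g_\delta^{-}\le\mathrm{Lip}f$ pointwise and the monotone limit $\delta\downarrow 0$ recovers the sharp constant; and the selection of continuity points of the left-continuous nondecreasing function $r\mapsto\upsilon(B_r(A))$ is what legitimizes the identification $\upsilon(\{\mathrm{dist}(\cdot,A)\le r_i\})=\upsilon(B_{r_i}(A))$ in the upper bound for $\int_X\mathrm{Lip}f_i\,d\upsilon$. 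One small remark: as written your argument never actually invokes the atomlessness of $\upsilon$ (beyond its implicit role in guaranteeing that admissible sets $A$ and admissible functions $f$ exist, so that neither infimum is vacuously $\infty$); this is not a gap, since an unused hypothesis does no harm, and in the paper's applications atomlessness is supplied automatically by Proposition \ref{cm}.
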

\begin{proof}
This is a direct consequence of Proposition \ref{1lip}, Lemma \ref{665} and \cite[Lemma $2.2$]{mil}.
See also \cite{bh, fed-fl, ma3}. 
\end{proof}
Thus, in this paper, we often use the following notation:
\[\lambda_{1, 1}(X):=h(X).\]
\begin{remark}\label{101}
Let $1 \le p <\infty$.
By Corollary \ref{wahaha} and Proposition \ref{char1},
if $(X, \upsilon)$ satisfies the $(p, p)$-Poincar\'e inequality for $\tau$ and $\upsilon$ is atomless, then, we have
\[\lambda_{1, p}(X)\ge \frac{1}{\tau \mathrm{diam}\,X}\]

In particular, by Propositions \ref{sp} and \ref{cm}, if $X$ is a geodesic space, $(X, \upsilon)$ satisfies the doubling condition for $\kappa$,
and $(X, \upsilon)$ satisfies the $(1, p)$-Poincar\'e inequality for $\tau$,
then, 
\[\lambda_{1, p}(X)\ge \frac{1}{C\mathrm{diam}\,X}\]
holds, where $C:=C(\kappa, \tau, p)>0$.
\end{remark}
\subsection{Gromov-Hausdorff convergence}
In this subsection $2.3$, we discuss Gromov-Hausdorff convergence.
In particular, we introduce Cheeger-Colding's works and the notion of $L^p$-convergence with respect to the Gromov-Hausdorff convergence which are perform key roles to prove Theorem \ref{mthm}.
\subsubsection{Gromov-Hausdorff convergence and $L^p$-convergence of functions}
Let $\{(X_i, \upsilon_i)\}_{i \le \infty}$ be a sequence of compact metric measure spaces.
We say that \textit{$(X_i, \upsilon_i)$ Gromov-Hausdorff converges to $(X_{\infty}, \upsilon_{\infty})$} if there exist a sequence of Borel maps $\phi_i: X_i \to X_{\infty}$ and a sequence of positive numbers $\epsilon_i \searrow 0$ such that the following three conditions hold:
\begin{enumerate}
\item $X_{\infty}=B_{\epsilon_i}(\phi_i(X_i))$ holds, 
\item $\overline{x, y}=\overline{\phi_i(x), \phi_i(y)} \pm \epsilon_i$ holds for every $i<\infty$ and any $x, y \in X_i$,
\item $(\phi_i)_*\upsilon_i$ converges weakly to $\upsilon_{\infty}$ on $X_{\infty}$.
\end{enumerate}
Then, we denote by $(X_i, \upsilon_i) \to (X_{\infty}, \upsilon_{\infty})$ the convergence for short.
See \cite{ch-co1, fu, gr}.

Moreover, for a sequence $\{x_i\}_{i \le \infty}$ of points $x_i \in X_i$, we say that \textit{$x_i$ converges to $x_{\infty}$} \textit{with respect to the convergence $(X_i, \upsilon_i) \to (X_{\infty}, \upsilon_{\infty})$} if $\phi_i(x_i)$ converges to $x_{\infty}$.
Then we also denote by $x_i \to x_{\infty}$ the convergence for short.

We introduce the notion of $L^p$-convergence of functions with respect to the Gromov-Hausdorff topology by Kuwae-Shioya given in \cite{ks9, ks2}.
We give an equivalent version of the original definition given in \cite{ho} because it is useful to compare with the case of tensor fields which will be discussed in subsection $2.3.2$. 

Assume that $(X_i, \upsilon_i) \to (X_{\infty}, \upsilon_{\infty})$, that every $X_i$ is a geodesic space, and that there exists a positive number $\kappa>0$ such that for every $i \le \infty$, $(X_i, \upsilon_i)$ satisfies the doubling condition for $\kappa$.
Note that $\upsilon_i(B_r(x_i)) \to \upsilon_{\infty}(B_r(x_{\infty}))$ holds for every $x_i \to x_{\infty}$ and every $r>0$.
Let $1<p<\infty$ and let $\{f_i\}_{i \le \infty}$ be a sequence of $L^p$-functions $f_i \in L^p(X_i)$.
\begin{definition}\cite{ks9, ks2}\label{lpf}
\begin{enumerate}
\item We say that \textit{$f_i$ $L^p$-converges weakly to $f_{\infty}$ on $X_{\infty}$} if $\sup_{i}||f_i||_{L^p}<\infty$ and 
\begin{align}\label{wdef}
\lim_{i \to \infty}\int_{B_r(x_i)}f_id\upsilon_i=\int_{B_r(x_{\infty})}f_{\infty}d\upsilon_{\infty}
\end{align}
hold for every $x_i \to x_{\infty}$ and every $r>0$.
\item We say that \textit{$f_i$ $L^p$ converges strongly to $f_{\infty}$ on $X_{\infty}$} if $f_i$ $L^p$-converges weakly to $f_{\infty}$ on $X_{\infty}$ and $\limsup_{i \to \infty}||f_i||_{L^p}\le ||f_{\infty}||_{L^p}$ holds.
\end{enumerate}
\end{definition}
Note that if the spaces are the same, i.e., $(X_i, \upsilon_i) \equiv (X_{\infty}, \upsilon_{\infty})$, then the notions above coincide the ordinary sense of $L^p$-convergence.
In \cite{ho, ks9, ks2}, several fundamental properties on $L^p$-convergence were proven.
We now introduce two fundamental properties on $L^p$-weak convergence only which are well-known in the case that the spaces are the same.
Roughly speaking:
\begin{enumerate}
\item[(WC)] Every $L^p$-bounded sequence has an $L^p$-weak convergent subsequence. 
\item[(LS)] $L^p$-norms are lower semicontinuous with respect to the $L^p$-weak topology.
\end{enumerate}
\begin{remark}\label{kmkm}
Moreover, we can define a more general notion `$\{L^{p_i}\}_{i \le \infty}$-convergence' for a convergent sequence of positive numbers $p_i \to p_{\infty} \in (1, \infty)$ and a sequence $\{g_i\}_{i \le \infty}$ of $L^{p_i}$-functions $g_i \in L^{p_i}(X_i)$ as follows:
We say that \textit{$g_i$ $\{L^{p_i}\}_{i \le \infty}$-converges weakly to $g_{\infty}$ on $X_{\infty}$} if $\sup_{i}||g_i||_{L^{p_i}}<\infty$ and (\ref{wdef}) hold.
We also say that \textit{$g_i$ $\{L^{p_i}\}_{i \le \infty}$-converges strongly to $g_{\infty}$ on $X_{\infty}$} if $g_i$ $\{L^{p_i}\}_{i \le \infty}$-converges weakly to $g_{\infty}$ on $X_{\infty}$ and $\limsup_{i \to \infty}||g_i||_{L^{p_i}}\le ||g_{\infty}||_{L^{p_{\infty}}}$ holds.
Note that we can also define $\{L^{p_i}\}_{i \le \infty}$-strong convergence by a different way even if the case of $p_i \in [1, \infty)$. See \cite[Remark $3.36$]{ho}.
We here introduce a property on this convergence: If $f_i$ $L^p$-converges strongly to $f_{\infty}$ on $X_{\infty}$, then $f_i$ $\{L^{p_i}\}_i$-converges strongly to $f_{\infty}$ on $X_{\infty}$ for every $\{p_i\}_{i \le \infty} \subset [1, p]$ with $p_i \to p_{\infty}$. 
In particular, we have 
\begin{align}\label{glp}
\lim_{i \to \infty}||f_i||_{L^{p_i}}=||f_{\infty}||_{L^{p_{\infty}}}.
\end{align}
See \cite{ho} for the details.
\end{remark}
\begin{proposition}\label{plpl}
Assume that $f_i$ $L^p$-converges strongly to $f_{\infty}$ on $X_{\infty}$.
Then for every $\{p_i\}_{i \le \infty} \subset [1, p]$ with $p_i \to p_{\infty}$, we have
\[\lim_{i \to \infty}c_{p_i}(f_i) = c_{p_{\infty}}(f_{\infty}).\]
\end{proposition}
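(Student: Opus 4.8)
The plan is to prove the two inequalities $\limsup_{i \to \infty} c_{p_i}(f_i) \le c_{p_{\infty}}(f_{\infty})$ and $\liminf_{i \to \infty} c_{p_i}(f_i) \ge c_{p_{\infty}}(f_{\infty})$ separately. The inputs are: the existence of optimal constants $a_p(f)$ for $f \in L^p(X)$ (the unique minimizer of Lemma \ref{char} when $1<p<\infty$, a median from Lemma \ref{665} when $p=1$); the stability of $L^p$- and $\{L^{p_i}\}_i$-convergence under shifts by convergent real constants, together with the norm convergence (\ref{glp}) attached to $\{L^{p_i}\}_i$-strong convergence in Remark \ref{kmkm}; and the lower semicontinuity property (LS), in its variable-exponent form, from \cite{ho}.

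For the upper bound, I would put $t_{\infty} := a_{p_{\infty}}(f_{\infty})$, so that $||f_{\infty} - t_{\infty}||_{L^{p_{\infty}}} = c_{p_{\infty}}(f_{\infty})$. The constant function $t_{\infty}$ on $X_i$ converges $L^p$-strongly to the constant $t_{\infty}$ on $X_{\infty}$: its weak convergence is immediate from $\upsilon_i(B_r(x_i)) \to \upsilon_{\infty}(B_r(x_{\infty}))$, and its $L^p$-norm equals $|t_{\infty}|$ on every $X_i$. Hence $f_i - t_{\infty}$ converges $L^p$-strongly, and therefore --- since $\{p_i\}_{i \le \infty} \subset [1,p]$ and $p_i \to p_{\infty}$ --- also $\{L^{p_i}\}_i$-strongly, to $f_{\infty} - t_{\infty}$; by (\ref{glp}) this gives $||f_i - t_{\infty}||_{L^{p_i}} \to ||f_{\infty} - t_{\infty}||_{L^{p_{\infty}}} = c_{p_{\infty}}(f_{\infty})$. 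Since $c_{p_i}(f_i) \le ||f_i - t_{\infty}||_{L^{p_i}}$ directly from the definition of $c_{p_i}$, taking $\limsup$ yields the first inequality.

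For the lower bound, I would pass to a subsequence along which $c_{p_i}(f_i) \to \liminf_{i \to \infty} c_{p_i}(f_i)$ and, for each such $i$, choose $t_i := a_{p_i}(f_i)$ (read as a median when $p_i=1$), so that $||f_i - t_i||_{L^{p_i}} = c_{p_i}(f_i)$. The triangle inequality gives $|t_i| = ||t_i||_{L^{p_i}} \le ||f_i||_{L^{p_i}} + ||f_i - t_i||_{L^{p_i}} \le 2||f_i||_{L^{p_i}}$, and $\sup_i ||f_i||_{L^{p_i}} < \infty$ because $f_i$ is in particular $\{L^{p_i}\}_i$-weakly convergent; so, after passing to a further subsequence, $t_i \to t_0$ for some $t_0 \in \mathbf{R}$. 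Then $f_i - t_i$ converges $\{L^{p_i}\}_i$-weakly to $f_{\infty} - t_0$, by linearity of the defining relation (\ref{wdef}) and the uniform bound just obtained, so the (variable-exponent) lower semicontinuity property (LS) gives $\liminf_{i \to \infty} c_{p_i}(f_i) = \liminf_{i \to \infty} ||f_i - t_i||_{L^{p_i}} \ge ||f_{\infty} - t_0||_{L^{p_{\infty}}} \ge c_{p_{\infty}}(f_{\infty})$, which is the second inequality. Combining the two inequalities proves the proposition.

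This is a soft compactness argument, so I do not expect a genuine difficulty; the one point requiring care is invoking correctly the variable-exponent versions of the basic $L^p$-convergence facts from \cite{ho} --- that affine modifications preserve ($\{L^{p_i}\}_i$-)strong convergence and that (LS) persists as the exponents vary --- and observing that the endpoint $p_{\infty}=1$ needs no change once $a_{p_{\infty}}$ is interpreted as a median there via Lemma \ref{665}.
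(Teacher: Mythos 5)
Your proposal is correct and follows essentially the same route as the paper: the upper bound by testing with the fixed constant $a_{p_{\infty}}(f_{\infty})$ and the norm convergence (\ref{glp}), and the lower bound by noting $|a_{p_i}(f_i)|\le 2\sup_i\|f_i\|_{L^p}<\infty$, extracting a convergent subsequence of the optimal constants, and comparing with $c_{p_{\infty}}(f_{\infty})$. The only cosmetic difference is that in the lower bound the paper applies (\ref{glp}) to the strongly convergent sequence $f_i-a_{p_i}(f_i)$ rather than invoking a variable-exponent (LS), which you could equally do since your $f_i-t_i$ in fact converges strongly.
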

\begin{proof}
(\ref{glp}) yields
\[\lim_{i \to \infty} ||f_i-a_{p_{\infty}}(f_{\infty})||_{L^{p_i}}=||f_{\infty}-a_{p_{\infty}}(f_{\infty})||_{L^{p_{\infty}}}.\]
Since $c_{p_i}(f_i)\le ||f_i-a_{p_{\infty}}(f_{\infty})||_{L^{p_i}}$ holds for every $i$, we have 
\begin{align}\label{lllll}
\limsup_{i \to \infty}c_{p_i}(f_i) \le c_{p_{\infty}}(f_{\infty}).
\end{align}
On the other hand, since 
\[|a_{p_i}(f_i)| \le ||f_i-a_{p_i}(f_i)||_{L^{p_i}}+||f_i||_{L^{p_i}} \le 2 ||f_i||_{L^{p_i}} \le 2 \sup_{i}||f_i||_{L^p}<\infty,\]
without loss of generality, we can assume that there exists a number $a_{\infty} \in \mathbf{R}$ such that $a_{p_i}(f_i) \to a_{\infty}$ holds.
(\ref{glp}) yields
\[\lim_{i \to \infty}||f_i-a_{p_i}(f_i)||_{L^{p_i}}=||f_{\infty}-a_{\infty}||_{L^{p_{\infty}}}.\]
Since $||f_{\infty}-a_{\infty}||\ge c_{p_{\infty}}(f_{\infty})$,
we have 
\begin{align}\label{pppppp}
\liminf_{i \to \infty}c_{p_i}(f_i) \ge c_{p_{\infty}}(f_{\infty}).
\end{align}
(\ref{lllll}) and (\ref{pppppp}) yield the assertion.
\end{proof}
\begin{theorem}\label{compact}
Let $q \in (1, \infty)$ and let $\tau>0$.
Assume that for every $i < \infty$, $(X_i, \upsilon_i)$ satisfies the $(q, p)$-Poincar\'e inequality for $\tau$.
Then, for every sequence $\{f_i\}_{i < \infty}$ of Sobolev functions $f_i \in H_{1, p}(X_i)$ with $\sup_i||f_i||_{H_{1, p}}<\infty$, there exist a subsequence $\{f_{i(j)}\}_j$ and an $L^q$-function $f_{\infty} \in L^{q}(X_{\infty})$ such that $f_{i(j)}$ $L^r$-converges strongly to $f_{\infty}$ on $X_{\infty}$ for every $1<r<q$.
\end{theorem}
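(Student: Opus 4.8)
The plan is to run the classical Rellich--Kondrachov compactness argument in the Gromov--Hausdorff setting. Fix an upper gradient $g_i$ of $f_i$ with $\sup_i\|g_i\|_{L^p(X_i)}\le\sup_i\|f_i\|_{H_{1,p}}<\infty$, and put $D:=\sup_i\mathrm{diam}\,X_i$, which is finite because $\mathrm{diam}\,X_i\to\mathrm{diam}\,X_\infty$. First I would record a uniform $L^q$-bound: applying the $(q,p)$-Poincar\'e inequality for $\tau$ with $x\in X_i$ and $r>\mathrm{diam}\,X_i$, so that $B_r(x)=X_i$ and $\upsilon_i(B_r(x))=1$ (the inequality being valid for $f_i\in H_{1,p}(X_i)$ with $g_i$ in place of $\mathrm{Lip}f_i$, by the density of Lipschitz functions in Remark~\ref{op} and the definition of $H_{1,p}$), yields
\[\left\|f_i-\int_{X_i}f_i\,d\upsilon_i\right\|_{L^q(X_i)}\le\tau D\,\|g_i\|_{L^p(X_i)},\]
hence $\sup_i\|f_i\|_{L^q(X_i)}<\infty$. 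By (WC) I then pass to a subsequence along which $f_i$ $L^q$-converges weakly to some $f_\infty\in L^q(X_\infty)$; by H\"older's inequality and (\ref{wdef}) the same subsequence $L^r$-converges weakly to $f_\infty$ for every $1\le r<q$.

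The core of the argument is a uniform $L^p$-approximation of $f_i$ by ball averages. For $\delta>0$ set
\[(f_i)_\delta(x):=\frac{1}{\upsilon_i(B_\delta(x))}\int_{B_\delta(x)}f_i\,d\upsilon_i.\]
Iterating the $(1,p)$-Poincar\'e inequality (a consequence of the $(q,p)$-Poincar\'e inequality and Jensen, see Remark~\ref{op}) over the dyadic balls $B_{2^{-j}\delta}(x)$, together with the doubling condition, gives for every $x\in\mathrm{Leb}\,f_i$
\[|f_i(x)-(f_i)_\delta(x)|\ \le\ 2^{\kappa}\tau\delta\sum_{j\ge0}2^{-j}\Bigl(\frac{1}{\upsilon_i(B_{2^{-j}\delta}(x))}\int_{B_{2^{-j}\delta}(x)}g_i^{\,p}\,d\upsilon_i\Bigr)^{1/p}.\]
Taking $L^p(X_i)$-norms, applying Minkowski's inequality in the sum over $j$, and using the Fubini estimate
\[\Bigl\|\,x\mapsto\Bigl(\frac{1}{\upsilon_i(B_\rho(x))}\int_{B_\rho(x)}g_i^{\,p}\,d\upsilon_i\Bigr)^{1/p}\,\Bigr\|_{L^p(X_i)}\le 2^{\kappa/p}\|g_i\|_{L^p(X_i)}\qquad(\rho>0),\]
which is immediate from the doubling condition, I obtain
\[\sup_i\|f_i-(f_i)_\delta\|_{L^p(X_i)}\ \le\ C(\kappa,p)\,\tau\,\delta\,\sup_i\|f_i\|_{H_{1,p}}\ =:\ \Psi(\delta).\]

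On the other hand, for each fixed $\delta$ the functions $(f_i)_\delta$ are uniformly bounded --- $|(f_i)_\delta(x)|\le\upsilon_i(B_\delta(x))^{-1}\|f_i\|_{L^1}$, and $\inf_x\upsilon_i(B_\delta(x))$ is bounded below in terms of $\delta,\kappa,D$ by the doubling condition --- and equicontinuous, with a modulus of continuity depending only on $\delta,\kappa,q,D$ and $\sup_i\|f_i\|_{L^q}$; the equicontinuity comes from reducing $B_\delta(x)\triangle B_\delta(x')$ to thin annuli, bounding their measure by Proposition~\ref{cm}, and combining this with H\"older's inequality and the uniform $L^q$-bound. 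Hence, after a further subsequence, the Arzel\`a--Ascoli theorem along Gromov--Hausdorff convergence produces a continuous function on $X_\infty$ to which $(f_i)_\delta$ converges uniformly; by the $L^q$-weak convergence of $f_i$ and the convergence $\upsilon_i(B_\delta(x_i))\to\upsilon_\infty(B_\delta(x_\infty))$ this limit is $(f_\infty)_\delta$, and uniform convergence together with the weak convergence of measures upgrades it to $L^p$-strong convergence $(f_i)_\delta\to(f_\infty)_\delta$ for each fixed $\delta$. A diagonal argument over $\delta=1/m$ then yields a single subsequence of $\{f_i\}$ along which $(f_i)_{1/m}\to(f_\infty)_{1/m}$ $L^p$-strongly for all $m$. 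Since $f_i-(f_i)_\delta\to f_\infty-(f_\infty)_\delta$ $L^p$-weakly, (LS) gives $\|f_\infty-(f_\infty)_\delta\|_{L^p}\le\Psi(\delta)$, so from $\|f_i\|_{L^p}\le\|(f_i)_\delta\|_{L^p}+\Psi(\delta)$, letting $i\to\infty$ and then $\delta\to0$, we get $\limsup_i\|f_i\|_{L^p}\le\|f_\infty\|_{L^p}$; combined with $L^p$-weak convergence this is $L^p$-strong convergence $f_i\to f_\infty$. Finally, to reach $L^r$-strong convergence for every $1<r<q$: the uniform $L^q$-bound makes $\int_{\{|f_i|>T\}}|f_i|^r\,d\upsilon_i\le T^{r-q}\sup_i\|f_i\|_{L^q}^q$ tend to $0$ uniformly in $i$ as $T\to\infty$, while the truncations $|f_i|^r\wedge T^r$ converge $L^1$-strongly to $|f_\infty|^r\wedge T^r$ (composition of the $L^p$-strong convergence of $f_i$ with a bounded Lipschitz function, as in \cite{ho}); hence $\|f_i\|_{L^r}\to\|f_\infty\|_{L^r}$, which with $L^r$-weak convergence gives $L^r$-strong convergence.

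The step I expect to be the main obstacle is the third one: one must make two competing estimates coexist --- that $(f_i)_\delta$ approximates $f_i$ in $L^p$ with an error small uniformly in $i$ (a quantitative use of the Poincar\'e inequality and the doubling condition), and that, at each fixed scale $\delta$, the averages $(f_i)_\delta$ subconverge along the Gromov--Hausdorff convergence to the correct limit $(f_\infty)_\delta$. The latter is possible only because, at a fixed scale, the $(f_i)_\delta$ enjoy a modulus of continuity independent of $i$, which in turn rests on the Colding--Minicozzi volume estimate (Proposition~\ref{cm}) and on the uniform $L^q$-bound from the first step; reconciling these with the diagonal passage $\delta\to0$ is the technical heart of the proof.
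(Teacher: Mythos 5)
Your argument is correct (modulo the standard facts from \cite{ho} that you invoke), but it follows a genuinely different route from the paper's. The paper does not mollify: it fixes a threshold $t$, takes the good set $K_{i,t}$ where the maximal function of $g_{f_i}^p$ is at most $t^p$ (so $\upsilon_i(X_i\setminus K_{i,t})\le Ct^{-p}$), shows by a telescope argument that $f_i$ is $C(\kappa,\tau)t$-Lipschitz on $K_{i,t}$, and extends by McShane's lemma to a uniformly Lipschitz function $f_{i,t}$ on all of $X_i$; the $(q,p)$-Poincar\'e inequality gives a uniform $L^q$-bound on $f_{i,t}$, so H\"older on the small bad set yields $\|f_i-f_{i,t}\|_{L^r}\le\Psi(t^{-1})$ directly for every $r<q$, and then the Lipschitz Arzel\`a--Ascoli-type results of \cite{ho} plus (LS) finish the proof by letting $t\to\infty$. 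You instead approximate by ball averages $(f_i)_\delta$, control $\|f_i-(f_i)_\delta\|_{L^p}$ uniformly by a telescoped $(1,p)$-Poincar\'e inequality and the doubling/Fubini estimate, obtain equicontinuity at fixed scale from Proposition \ref{cm} and the uniform $L^q$-bound, identify the limit as $(f_\infty)_\delta$ straight from the definition of $L^q$-weak convergence, and only at the end upgrade from $L^p$- to $L^r$-strong convergence ($1<r<q$) via truncation and uniform integrability. What the paper's route buys is that its approximants are uniformly Lipschitz, so it can quote its earlier convergence results for Lipschitz functions verbatim and land in every $L^r$, $r<q$, in a single step; what your route buys is that it avoids the maximal-function estimate and the McShane extension entirely, the $(q,p)$-inequality with $q$ large entering only through the a priori $L^q$-bound and the final integrability upgrade, while the compactness core uses only $(1,p)$ plus doubling. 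The price is that you need an Arzel\`a--Ascoli statement along Gromov--Hausdorff convergence for merely equicontinuous (not Lipschitz) sequences and the stability of strong convergence under composition with bounded Lipschitz maps; both are available in \cite{ho} (and are used elsewhere in the paper, e.g.\ in the proof of Claim \ref{987}), and likewise the extension of the Poincar\'e inequality from Lipschitz functions to $H_{1,p}$-functions with upper gradients, which you justify by density as in Remark \ref{op}, is the same implicit step the paper takes, so there is no genuine gap.
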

\begin{proof}
By (WC), without loss of generality, we can assume that there exists an $L^q$-function $f_{\infty} \in L^q(X_{\infty})$ such that $f_i$ $L^q$-converges weakly to $f_{\infty}$ on $X_{\infty}$.
It suffices to check that $f_{i}$ $L^r$-converges strongly to $f_{\infty}$ on $X_{\infty}$ for every $1<r<q$.

Let $1<r<q$, $t \ge 1$, $d:=\sup_i\mathrm{diam}\,X_i$, $L:=\sup_i||f_i||_{H_{1, p}}$, and let $K_{i, t}$ denotes the set of points $x \in X_i$ satisfying that 
\[\frac{1}{\upsilon_i(B_r(x))}\int_{B_r(x)}(g_{f_i})^pd\upsilon_i \le t^p\]
holds for every $r>0$, where $g_{f_i}$ is the generalized minimal upper gradient of $f_i$ (see for instance \cite{ch1} for the definition).
It is not difficult to check that $\upsilon_i(X_i \setminus K_{i, t}) \le C(\kappa, d, L)t^{-p}$ holds (cf. \cite[Theorem $2.2$]{He}).
Then by a `telescope argument using the $(1, p)$-Poincar\'e inequality for $\tau$ on $X_i$', without loss of generality, we can assume that $f_i$ is $C(\kappa, \tau)t$-Lipschitz on $K_{i, t}$ (cf. \cite[Theorem $4.14$]{ch1}).
By Macshane's lemma (cf. \cite[$(8.2)$]{ch1}), there exists a $C(\kappa, \tau)t$-Lipschitz function $f_{i, t}$ on $X_i$ such that $f_{i, t}|_{K_{i, t}} \equiv f_i$ holds.

Let $x_{i, t} \in K_{i, t} \cap \mathrm{Leb}\,f_i$.
Applying a telescope argument again yields 
\[|f_{i}(x_{i, t})| \le \int_{X_i}|f_i|d\upsilon_i +C(\kappa, \tau, d)t \le C(\kappa, \tau, d,  L)t.\]
Thus, for every $y_i \in X_i$, we have 
\begin{align*}
|f_{i, t}(y_i)|&\le |f_{i, t}(x_{i, t})|+\mathbf{Lip}f_{i, t}\overline{y_i, x_{i, t}} \\
&\le  |f_i(x_{i, t})|+C(\kappa, \tau, d)t \le C(\kappa, \tau, d, L)t.
\end{align*}
In particular, 
\begin{align}\label{oooi}
\int_{X_i}|f_{i, t}|d\upsilon_i &= \int_{X_i \setminus K_{i, t}}|f_{i, t}|d\upsilon_i + \int_{K_{i, t}}|f_i|d\upsilon_i \nonumber \\
&\le C(\kappa, \tau, d, L)t\upsilon_i(X_i \setminus K_{i, t})+L \le C(\kappa, \tau, d, L).
\end{align}
On the other hand, by \cite[Corolalry $2.25$ and Theorem $5.1$]{ch1}, we have
\begin{align}\label{yyyyy}
\left(\int_{X_i}\left(\mathrm{Lip}\,f_{i, t}\right)^{p}d\upsilon_i \right)^{1/p} &\le \left(\int_{X_i \setminus K_{i, t}}\left(\mathrm{Lip}\,f_{i, t}\right)^{p}d\upsilon_i\right)^{1/p} + \left(\int_{K_{i, t}}\left(g_{f_i}\right)^{p}d\upsilon_i\right)^{1/p} \nonumber \\
&\le \left(C(\kappa, \tau)^pt^p\upsilon_i(X_i \setminus K_{i, t})\right)^{1/p} + L \le C(\kappa, \tau, d, L).
\end{align}
Therefore, (\ref{oooi}), (\ref{yyyyy}), and the $(q, p)$-Poincar\'e inequality for $\tau$ on $X_i$ give $||f_{i, t}||_{L^q}\le C(\kappa, \tau, d, L)$.
Thus, the H$\ddot{\text{o}}$lder inequality yields that
\begin{align}\label{mmmmm}
\left(\int_{X_i}|f_i-f_{i, t}|^rd\upsilon_i\right)^{1/r} &= \left(\int_{X_i}1_{X_i \setminus K_{i, t}}|f_i-f_{i, t}|^rd\upsilon_i \right)^{1/r}\nonumber \\
&\le \upsilon_i(X_i \setminus K_{i, t})^{1/(\alpha r)}||f_i-f_{i, t}||_{L^q}\le \Psi
\end{align}
holds for every $i$, where $\beta:=q/r>1, \Psi=\Psi(t^{-1};\kappa, \tau, d, L, q, r)$ and $\alpha$ is the conjugate exponent of $\beta$.

On the other hand, without loss of generality, we can assume that there exists a $C(\kappa, \tau)t$-Lipschitz function $\hat{f}_{\infty, t}$ on $X_{\infty}$ such that $f_{i, t}(z_i) \to \hat{f}_{\infty, t}(z_i)$ holds for every sequence $\{z_i\}_{i \le \infty}$ of points $z_i \in X_i$ with $z_i \to z_{\infty}$ (cf. \cite[Proposition $3.3$]{ho}).
Note that for every $1<s<\infty$, $f_{i, t}$ $L^s$-converges strongly to $\hat{f}_{\infty, t}$ on $X_{\infty}$ (cf. \cite[Proposition $3.32$]{ho}).
Then (LS) yields
\begin{align}\label{bbbb}
||f_{\infty}-\hat{f}_{\infty, t}||_{L^r} \le \liminf_{i \to \infty}||f_i-f_{i, t}||_{L^r} \le \Psi.
\end{align}
Since $t$ is arbitrary, (\ref{mmmmm}) and (\ref{bbbb}) yield that $f_{i}$ $L^r$-converges strongly to $f_{\infty}$ on $X_{\infty}$.
\end{proof}
We give an application of Theorem \ref{compact}.  
\begin{theorem}\label{rrrr}
Let $1 \le q <\infty$ and let $(X, \upsilon)$ be a compact metric measure space.
Assume that $X$ is a geodesic space, that $(X, \upsilon)$ satisfies the doubling condition for some $\kappa >0$, and that $(X, \upsilon)$ satisfies $(1, q)$-Poincar\'e inequality for some $\tau>0$.
Then, we see that the function $r \mapsto \lambda_{1, r}(X)$ is right-continuous at $q$.
\end{theorem}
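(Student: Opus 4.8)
The plan is to establish the two one-sided bounds $\limsup_{r\to q^{+}}\lambda_{1,r}(X)\le\lambda_{1,q}(X)$ and $\liminf_{r\to q^{+}}\lambda_{1,r}(X)\ge\lambda_{1,q}(X)$. If $X$ is a single point all the quantities equal $\infty$ and there is nothing to prove, so assume $X$ is not a point; then $\upsilon$ is atomless by Proposition \ref{cm}, and $\lambda_{1,q}(X)<\infty$ since one may test with any non-constant Lipschitz function.

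For the upper bound no Poincar\'e assumption is needed. Given $\varepsilon>0$, use Corollary \ref{wahaha} (Proposition \ref{char1} when $q=1$) to choose a Lipschitz $f$ with $c_{q}(f)=\|f\|_{L^{q}}=1$ and $\int_{X}(\mathrm{Lip}f)^{q}\,d\upsilon\le\lambda_{1,q}(X)+\varepsilon$. For $r>q$ set $f_{r}:=c_{r}(f)^{-1}(f-a_{r}(f))$; by Proposition \ref{1lip} and Lemma \ref{char} one has $c_{r}(f_{r})=\|f_{r}\|_{L^{r}}=1$, hence $\lambda_{1,r}(X)\le\int_{X}(\mathrm{Lip}f_{r})^{r}\,d\upsilon=c_{r}(f)^{-r}\int_{X}(\mathrm{Lip}f)^{r}\,d\upsilon$. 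Now $c_{r}(f)\to c_{q}(f)=1$ as $r\to q^{+}$ by Proposition \ref{plpl} applied to the constant sequence $f_{i}\equiv f$ (which, being bounded, lies in $L^{p}$ for any fixed $p>q$), while $\int_{X}(\mathrm{Lip}f)^{r}\,d\upsilon\to\int_{X}(\mathrm{Lip}f)^{q}\,d\upsilon$ by dominated convergence (since $\mathrm{Lip}f\le\mathbf{Lip}f<\infty$ and $\upsilon(X)=1$). Thus $\limsup_{r\to q^{+}}\lambda_{1,r}(X)\le\lambda_{1,q}(X)+\varepsilon$, and $\varepsilon$ was arbitrary.

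For the lower bound I argue by contradiction. Suppose $r_{i}\downarrow q$ with $\lambda_{1,r_{i}}(X)\to L<\lambda_{1,q}(X)$, and pick Lipschitz $f_{i}$ with $c_{r_{i}}(f_{i})=\|f_{i}\|_{L^{r_{i}}}=1$ and $\int_{X}(\mathrm{Lip}f_{i})^{r_{i}}\,d\upsilon\le\lambda_{1,r_{i}}(X)+1/i$; treat first $q>1$. Since $r_{i}>q$ and $\upsilon(X)=1$, H\"older's inequality gives $\|f_{i}\|_{L^{q}}\le1$ and $\|\mathrm{Lip}f_{i}\|_{L^{q}}\le(\lambda_{1,r_{i}}(X)+1/i)^{1/r_{i}}$, which is bounded; since $g_{f_{i}}\le\mathrm{Lip}f_{i}$, the sequence $\{f_{i}\}$ is bounded in $H_{1,q}(X)$. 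By Theorem \ref{sp} applied with exponent $q$ there are $\hat q=\hat q(\kappa,\tau,q)>q$ and $C=C(\kappa,\tau,q)$ so that $(X,\upsilon)$ satisfies the $(\hat q,q)$-Poincar\'e inequality for $C$; hence Theorem \ref{compact} (applied with $\hat q$ in the role of its parameter $q$, with $q$ in the role of $p$, and with all spaces equal to $(X,\upsilon)$) produces a subsequence and $f_{\infty}\in L^{\hat q}(X)$ with $f_{i(j)}\to f_{\infty}$ strongly in $L^{s}$ for every $s\in(1,\hat q)$. Fixing $s\in(q,\hat q)$, we have $r_{i(j)}\in[1,s]$ for $j$ large, so Remark \ref{kmkm} and Proposition \ref{plpl} give $\|f_{\infty}\|_{L^{q}}=\lim_{j}\|f_{i(j)}\|_{L^{r_{i(j)}}}=1$ and $c_{q}(f_{\infty})=\lim_{j}c_{r_{i(j)}}(f_{i(j)})=1$. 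As $f_{i(j)}\to f_{\infty}$ strongly in $L^{q}$ and $c_{q}$ is $1$-Lipschitz on $L^{q}(X)$ (Proposition \ref{1lip}), it follows that $\|f_{i(j)}\|_{L^{q}}\to1$ and $c_{q}(f_{i(j)})\to1$. Therefore, for $j$ large, $\tilde f_{j}:=c_{q}(f_{i(j)})^{-1}(f_{i(j)}-a_{q}(f_{i(j)}))$ is a Lipschitz competitor for $\lambda_{1,q}(X)$ with $\int_{X}(\mathrm{Lip}\tilde f_{j})^{q}\,d\upsilon=c_{q}(f_{i(j)})^{-q}\int_{X}(\mathrm{Lip}f_{i(j)})^{q}\,d\upsilon\le c_{q}(f_{i(j)})^{-q}(\lambda_{1,r_{i(j)}}(X)+1/i(j))^{q/r_{i(j)}}\to L$, whence $\lambda_{1,q}(X)\le L$, a contradiction.

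When $q=1$ the same scheme works with Theorem \ref{compact} replaced by the compact embedding of $BV(X)$ into $L^{s}(X)$ for $s$ below the Sobolev conjugate exponent $\hat 1>1$ furnished by Theorem \ref{sp} — available on a compact doubling geodesic space carrying a $(1,1)$-Poincar\'e inequality — the sequence $\{f_{i}\}$ being bounded in $BV(X)$ because $\|f_{i}\|_{L^{1}}\le1$ and $\|\mathrm{Lip}f_{i}\|_{L^{1}}\le(\lambda_{1,r_{i}}(X)+1/i)^{1/r_{i}}$ is bounded; one then obtains $\|f_{\infty}\|_{L^{1}}=c_{1}(f_{\infty})=1$ from Remark \ref{kmkm} and Proposition \ref{plpl}, deduces $\|f_{i(j)}\|_{L^{1}}\to1$ and $c_{1}(f_{i(j)})\to1$, and renormalizes the Lipschitz $f_{i(j)}$ into admissible competitors for $h(X)=\lambda_{1,1}(X)$ (Proposition \ref{char1}), reaching $h(X)\le L$. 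I expect the main obstacle to be exactly this passage to the limit in the lower bound — namely, showing that $f_{\infty}$ retains the normalization $\|f_{\infty}\|_{L^{q}}=c_{q}(f_{\infty})=1$ — which is where the $(1,q)$-Poincar\'e hypothesis is indispensable, since it is precisely what (via Theorem \ref{compact}, resp. the $BV$ embedding) rules out concentration, and hence loss of mass, of the almost-minimizers $f_{i}$ in the limit. The remaining verifications are routine manipulations with the $\{L^{p_{i}}\}$-convergence apparatus already developed.
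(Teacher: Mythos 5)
Your argument is correct and, for the main case $q>1$, it is essentially the paper's own proof: the same normalization-plus-Proposition \ref{plpl} (and dominated convergence) argument for $\limsup_{r\searrow q}\lambda_{1,r}(X)\le\lambda_{1,q}(X)$, and the same lower-bound mechanism — almost-minimizers bounded in $H_{1,q}$, Theorem \ref{sp} to upgrade to a $(\hat q,q)$-Poincar\'e inequality, Theorem \ref{compact} (with all spaces equal) to extract an $L^{s}$-strong limit for some $s\in(q,\hat q)$, and then Remark \ref{kmkm}/Proposition \ref{plpl} to keep the normalization $c_q=\|\cdot\|_{L^q}=1$ in the limit — merely phrased as a contradiction instead of directly; the "crux" you single out is exactly the step the paper handles with Proposition \ref{plpl}. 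The only genuine divergence is at $q=1$: the paper makes no case distinction and simply invokes Theorem \ref{compact}, whose stated exponent range is $1<p<\infty$, so strictly speaking its own proof glosses over $p=1$; you instead import a compact embedding of $BV(X)$ into $L^{s}(X)$ on doubling geodesic spaces with a $(1,1)$-Poincar\'e inequality, which is true (Miranda's $BV$ theory on PI spaces, combined with the $(\hat 1,1)$-Sobolev--Poincar\'e bound) but lies outside the toolkit developed in the paper, so if you use it you should cite it rather than attribute it to Theorem \ref{sp}. Two small remarks: your renormalized competitors are the Lipschitz $f_{i(j)}$ themselves, so it is irrelevant that the limit is only $BV$ — good; and at $q=1$ you can in fact dispense with compactness entirely, since interpolating $\|f_i-a_1(f_i)\|_{L^{r_i}}$ between $L^1$ and the uniform $L^{\hat 1}$ bound furnished by the $(\hat 1,1)$-Poincar\'e inequality already forces $c_1(f_i)\to 1$, which is all the final chain of inequalities requires.
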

\begin{proof}
Let $\epsilon>0$ and let $f$ be a Lipschitz function on $X$ with $||\mathrm{Lip}f||^q_{L^q}\le \lambda_{1, q}(X) + \epsilon$ and $||f||_{L^q}=c_q(f)=1$.
Then, since
$\lambda_{1, r}(X)\le \left(c_{r}(f)\right)^{-r}||\mathrm{Lip}f||_{L^r}^r$ holds
for every $r > q$,
by letting $r \searrow q$, Proposition \ref{plpl} yields 
$\limsup_{r \searrow q}\lambda_{1, r}(X) \le ||\mathrm{Lip}f||_{L^q}^q \le \lambda_{1, q}(X)+\epsilon.$
Since $\epsilon$ is arbitrary, we have 
\begin{align}\label{ohana}
\limsup_{r \searrow q}\lambda_{1, r}(X) \le \lambda_{1, q}(X).
\end{align}
On the other hand, let $\{q_i\}_i$ be a sequence of positive numbers $q_{i} \searrow q$,
and let $\{f_{i}\}_i$ be a sequence of Lipschitz functions $f_i$ on $X$ with $||\mathrm{Lip}f_{i}||^{q_i}_{L^{q_{i}}}\le \lambda_{1, q_{i}}(X) + \epsilon$ and $||f_{i}||_{L^{q_{i}}}=c_{q_{i}}(f_{i})=1$.
By Theorems \ref{sp}, \ref{compact}, and (\ref{ohana}), without loss of generality, we can assume that there exist a number $r>p$ and an $L^r$-function $f_{\infty} \in L^{r}(X)$ such that $f_i$ $L^{r}$-converges strongly to $f_{\infty}$ on $X$.
Thus, by Proposition \ref{plpl}, we have $c_{q}(f_i) \to c_q(f_{\infty})$.

Since
\[\lambda_{1, q}(X)\le c_q(f_i)^{-q}||\mathrm{Lip}f_i||^q_{L^q}\le c_q(f_i)^{-q}||\mathrm{Lip}f_i||^q_{L^{q_i}}\le c_q(f_i)^{-q}(\lambda_{1, q_i}(X)+\epsilon)^{q/q_i},\]
by letting $i \to \infty$ and $\epsilon \to 0$, we have $\liminf_{i \to \infty}\lambda_{1, q_i}(X) \ge \lambda_{1, q}(X)$. 
\end{proof}
\begin{remark}
It is known that finite dimensional Alexandrov spaces, $CD(K, \infty)$-metric measure spaces, and locally strongly doubling metric measure spaces satisfy the $(1, 1)$-Poincar\'e inequalities.
See \cite[Theorem $7.2$]{KMS}, \cite[Theorem $1$]{Ra}, and \cite[Theorem $4.1$]{rm}. 
Thus, by Theorem \ref{rrrr}, for every such a metric measure space $(X, \upsilon)$, we see that the function $r \mapsto \lambda_{1, r}(X)$ is right continuous on $[1, \infty)$.
See also for instance \cite{bgp, ks1, ks2, lo-vi, oh, os, st1, st2, vi} for these topics.
\end{remark}
\subsubsection{Cheeger-Colding's works and $L^p$-convergence of tensor fields}
In \cite{ch-co, ch-co1, ch-co2, ch-co3}, Cheeger-Colding developed the structure theory on limit spaces of Riemannian manifolds with lower Ricci curvature bounds.
We introduce several their results we need to prove Theorem \ref{mthm}:
\begin{theorem}\cite{ch-co, ch-co1, ch-co2, ch-co3}\label{ch-co}
Let $(X, \upsilon) \in \overline{M(n, K, d)}$.
Then we have the following:
\begin{enumerate}
\item $(X, \upsilon)$ satisfies the doubling condition for $\kappa:=\kappa(n, K, d)$.
\item $(X, \upsilon)$ satisfies the segment inequality for $\lambda:=\lambda (n, K, d)$.
\item There exist a topological space $TX$ (called \textit{the tangent bundle of $X$}) and a Borel map $\pi:TX \to X$ such that the following three conditions hold:
\begin{enumerate}
\item $\upsilon(X \setminus \pi(TX))=0$ holds,
\item For every $x \in \pi (TX)$, the fiber $T_xX:=\pi^{-1}(x)$ is a finite dimensional Hilbert space. Let us denote by $\langle \cdot, \cdot \rangle$ the inner product (called \textit{the Riemannian metric of $X$}) for short,
\item For every Lipschitz function $f$ on $X$, there exist a Borel subset $X_f$ of $X$ and a section $\nabla f: X_f \to TX$ such that 
$\upsilon (X \setminus X_f)=0$ holds and that
\begin{equation}\label{y}
|\nabla f|(x)=\mathrm{Lip}f(x)
\end{equation}
holds for every $x \in X_f$, where $|\nabla f|:=\sqrt{\langle \nabla f, \nabla f \rangle}$.
Moreover, for every $g \in H_{1, p}(X)$, there exists a section $\nabla g$ such that
\[||g||_{H_{1, p}(X)}=\left(\int_X|g|^pd\upsilon \right)^{1/p}+\left(\int_X|\nabla g|^pd\upsilon \right)^{1/p}\]
holds.
\end{enumerate}
\end{enumerate}
\end{theorem}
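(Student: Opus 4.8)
The statement collects standard facts from Cheeger--Colding's structure theory together with Cheeger's differentiation theorem, so the plan is to reduce each item to the corresponding property on the approximating manifolds $M_i \in M(n, K, d)$ and then pass to the measured Gromov--Hausdorff limit, invoking the relevant results of \cite{ch-co, ch-co1, ch-co2, ch-co3, ch1}.

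For (1), Bishop--Gromov relative volume comparison gives, for each $M_i$, that $\mathrm{Vol}(B_{2r}(x))/\mathrm{Vol}(B_r(x))$ is at most the ratio of the corresponding volumes in the simply connected model space of curvature $K$, and since $\mathrm{diam}\,M_i \le d$ this ratio is bounded by some $2^{\kappa}$ with $\kappa = \kappa(n, K, d)$; by Colding's volume continuity (part of the Cheeger--Colding theory \cite{ch-co, ch-co1, ch-co2, ch-co3}) one has $\upsilon_i(B_r(x_i)) \to \upsilon_\infty(B_r(x_\infty))$ for every $x_i \to x_\infty$ and every $r>0$, so the doubling inequality for $\kappa$ passes to $(X, \upsilon)$. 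For (2), Cheeger--Colding proved the segment inequality on manifolds with $\mathrm{Ric} \ge K(n-1)$ with $\lambda = \lambda(n, K, d)$; I would first check it on the limit for continuous nonnegative $f$, where it is stable under the convergence (minimal geodesics in the $M_i$ subconverge by Arzel\`a--Ascoli to minimal geodesics in $X$, so the left-hand side is controlled in the limit, while the right-hand side converges by the weak convergence of measures), and then extend to general nonnegative Borel $f$ by monotone approximation.

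The substantive item is (3). By (1), (2) and Proposition \ref{seg}, $(X, \upsilon)$ is doubling and satisfies a $(1,1)$-Poincar\'e inequality; hence Cheeger's differentiation theorem \cite{ch1} applies and produces a countable measurable atlas with respect to which every Lipschitz function is differentiable $\upsilon$-a.e. This yields a measurable bundle $TX$ with finite-dimensional fibers together with a Borel projection $\pi$ satisfying $\upsilon(X \setminus \pi(TX)) = 0$, and a section $\nabla f$ for each Lipschitz $f$; Cheeger's theorem further identifies the minimal generalized upper gradient $g_f$ with $\mathrm{Lip}\,f$ $\upsilon$-a.e., which gives (\ref{y}). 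The point that genuinely uses the lower Ricci bound is that the fiber conorm is Euclidean, i.e.\ comes from an inner product $\langle\cdot,\cdot\rangle$ $\upsilon$-a.e.: this is Cheeger--Colding's infinitesimal Hilbertian structure, extracted from the almost-splitting theorem and the analysis of tangent cones at $\upsilon$-a.e.\ point. Finally, the statement for $g \in H_{1,p}(X)$ follows by density of Lipschitz functions in $H_{1,p}(X)$ (Remark \ref{op}(3)) together with closability of $f \mapsto \nabla f$ with respect to the $L^p$-norm of $|\nabla f|$, so that $\nabla g$ is defined as the $L^p$-limit of $\nabla f_j$ for Lipschitz $f_j \to g$ in $H_{1,p}(X)$.

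The main obstacle is precisely the Hilbertian character of the fibers in (3): unlike (1), (2), and the differentiable-structure part of (3), it fails for general doubling spaces supporting a Poincar\'e inequality and relies essentially on Cheeger--Colding's rigidity and structure theory. Since a complete argument for all three items is already contained in \cite{ch-co, ch-co1, ch-co2, ch-co3} (and \cite{ch1} for the differentiation part), the proof here will be a pointer to those references rather than a reproduction of the details.
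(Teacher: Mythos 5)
Your proposal is correct and matches the paper's treatment: the paper states Theorem \ref{ch-co} purely as a compilation of results from \cite{ch-co, ch-co1, ch-co2, ch-co3} (together with Cheeger's differentiation theory in \cite{ch1} for the measurable tangent bundle), and simply defers to those references for the details. Your sketch of how each item is obtained there — Bishop--Gromov plus volume convergence for (1), stability of the segment inequality for (2), and Cheeger's differentiation theorem combined with Cheeger--Colding's infinitesimal Hilbertianity and density of Lipschitz functions for (3) — is an accurate account of the cited arguments.
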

See \cite{ch-co, ch-co1, ch-co2, ch-co3} for the details.
Let us denote by $L^p(TX)$ the set of $L^p$-sections from $\pi (TX)$ to $TX$.

In \cite{ho}, we discussed $L^p$-convergence of tensor fields with respect to the Gromov-Hausdorff topology (see also \cite{ho0}).
We recall it in the case of vector fields only.
Let $(X_i, \upsilon_i) \to (X_{\infty}, \upsilon_{\infty})$ in $\overline{M(n, K, d)}$ with $\mathrm{diam}\, X_{\infty}>0$, and let $1<p <\infty$.
\begin{definition}\cite[Definition $1.1$]{ho}
Let $V_i \in L^p(TX_i)$ for every $i \le \infty$. 
\begin{enumerate}
\item We say that \textit{$V_i$ $L^p$-converges weakly to $V_{\infty}$ on $X_{\infty}$} if 
$\sup_{i\le \infty} ||V_i||_{L^p}<\infty$
and 
\[\lim_{i \to \infty}\int_{B_r(x_i)}\langle V_i, \nabla r_{y_i}\rangle d\upsilon_i=\int_{B_r(x_{\infty})}\langle V_{\infty}, \nabla r_{y_{\infty}}\rangle d\upsilon_{\infty}\]
hold for every $x_i \to x_{\infty}$, every $y_i \to y_{\infty}$, and every $r>0$, where $r_{y_i}$ is the distance function from $y_i$, i.e., $r_{y_i}(w):=\overline{y_i, w}$. 
\item We say that \textit{$V_i$ $L^p$-converges strongly to $V_{\infty}$ on $X_{\infty}$} if $V_i$ $L^p$-converges weakly to $V_{\infty}$ on $X_{\infty}$ and 
$\limsup_{i \to \infty}||V_i||_{L^p}\le ||V_{\infty}||_{L^p}$
holds.
\end{enumerate}
\end{definition}
Compare with Definition \ref{lpf}.
We can also get several fundamental properties of this convergence, e.g., (WC) and (LS) in this setting.
See \cite[Propositions $3.50$ and $3.64$]{ho}.

We end this subsection by introducing two results given in \cite{ho}.
One of them is a Rellich type compactness.
The other is the continuity of the first eigenvalues of the $p$-Laplacian with respect to the Gromov-Hausdorff topology.
In Section $3$, they will play crucial roles in the proof of Theorem \ref{mthm}.
\begin{theorem}\cite[Theorem $4.9$]{ho}\label{rel}
Let $\{f_i\}_{i<\infty}$ be a sequence of Sobolev functions $f_i \in H_{1, p}(X_i)$ with $\sup_i||f_i||_{H_{1, p}(X_i)}<\infty$.
Then, there exist a Sobolev function $f_{\infty} \in H_{1, p}(X_{\infty})$ and a subsequence $\{f_{i(j)}\}_j$ such that
$f_{i(j)}$ $L^p$-converges strongly to $f_{\infty}$ on $X_{\infty}$ and that $\nabla f_{i(j)}$ $L^p$-converges weakly to $\nabla f_{\infty}$ on $X_{\infty}$.
In particular, we have
\begin{equation}\label{ii}
\liminf_{j \to \infty}\int_{X_{i(j)}}|\nabla f_{i(j)}|^pd\upsilon_{i(j)} \ge \int_{X_{\infty}}|\nabla f_{\infty}|^pd\upsilon_{\infty}.
\end{equation}
\end{theorem}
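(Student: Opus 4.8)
The plan is to combine the compact embedding $H_{1,p}\hookrightarrow L^{r}$ from Theorem \ref{compact} for the functions with the weak compactness (WC) for vector fields, and then to identify the $L^{p}$-weak limit of the gradients with the gradient of the $L^{p}$-strong limit by an integration-by-parts argument. First I would record that all of the spaces carry uniform functional inequalities. By Theorem \ref{ch-co}, every $(X_{i},\upsilon_{i})$ with $i\le\infty$ is a geodesic space which satisfies the doubling condition for $\kappa:=\kappa(n,K,d)$ and the segment inequality for $\lambda:=\lambda(n,K,d)$; hence by Proposition \ref{seg} it satisfies the $(1,1)$-Poincar\'e inequality for some $\tau:=\tau(\kappa,\lambda)>0$, and therefore the $(1,p)$-Poincar\'e inequality for the same $\tau$ by Remark \ref{op}~(1). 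Applying Theorem \ref{sp} together with Remark \ref{spr}, each $(X_{i},\upsilon_{i})$ satisfies the $(\hat{p},p)$-Poincar\'e inequality for a uniform constant $C:=C(\kappa,\tau,p)>0$ and a uniform exponent $\hat{p}:=\hat{p}(\kappa,\tau,p)>p$. Since $1<p<\hat{p}$, Theorem \ref{compact} applied with $q:=\hat{p}$ gives, after passing to a subsequence, a function $f_{\infty}\in L^{\hat{p}}(X_{\infty})$ such that $f_{i(j)}$ $L^{p}$-converges strongly to $f_{\infty}$ on $X_{\infty}$. Next, using Theorem \ref{ch-co}~(3) I would take the gradient sections $\nabla f_{i}\in L^{p}(TX_{i})$; since $\big(\int_{X_{i}}|\nabla f_{i}|^{p}d\upsilon_{i}\big)^{1/p}\le\|f_{i}\|_{H_{1,p}(X_{i})}$ is uniformly bounded, the standing hypothesis $\mathrm{diam}\,X_{\infty}>0$ makes $L^{p}$-convergence of vector fields available, and (WC) for vector fields produces a further subsequence along which $\nabla f_{i(j)}$ $L^{p}$-converges weakly to some $V_{\infty}\in L^{p}(TX_{\infty})$.

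The heart of the matter, which I expect to be the main obstacle, is to show that $f_{\infty}\in H_{1,p}(X_{\infty})$ with $V_{\infty}=\nabla f_{\infty}$. The idea is to test the weak convergence of $\nabla f_{i(j)}$ against a family of vector fields on $X_{\infty}$ rich enough to characterize the distributional gradient, each of which admits a ``lift'' to the $X_{i}$ that $L^{p/(p-1)}$-converges strongly together with an $L^{p/(p-1)}$-strongly convergent divergence; gradients of distance functions (whose convergence is built into Cheeger-Colding's theory) and the more general vector fields constructed in \cite{ho} are the natural candidates. For such a test field $W_{\infty}$ with lift $W_{i}$, passing to the limit in the identity $\int_{X_{i}}\langle\nabla f_{i},W_{i}\rangle\,d\upsilon_{i}=-\int_{X_{i}}f_{i}\,\mathrm{div}\,W_{i}\,d\upsilon_{i}$ yields $\int_{X_{\infty}}\langle V_{\infty},W_{\infty}\rangle\,d\upsilon_{\infty}=-\int_{X_{\infty}}f_{\infty}\,\mathrm{div}\,W_{\infty}\,d\upsilon_{\infty}$, the left side converging by the weak--strong pairing and the right side because $f_{i(j)}\to f_{\infty}$ strongly in $L^{p}$ and $\mathrm{div}\,W_{i}\to\mathrm{div}\,W_{\infty}$ strongly in $L^{p/(p-1)}$. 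This forces $f_{\infty}\in H_{1,p}(X_{\infty})$ and $V_{\infty}=\nabla f_{\infty}$. This step is precisely where the full $L^{p}$-convergence machinery of \cite{ho} and the structure theory of Theorem \ref{ch-co} are needed, and the delicate point is to verify that the chosen class of test fields is dense enough; alternatively one can run a Mazur-lemma/Fuglede-type argument showing that $|V_{\infty}|$ is a $p$-weak upper gradient of $f_{\infty}$ and then match the minimal upper gradients.

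Finally, once $V_{\infty}=\nabla f_{\infty}$ is established, $f_{\infty}\in H_{1,p}(X_{\infty})$ is clear because $\|f_{\infty}\|_{H_{1,p}(X_{\infty})}=\big(\int_{X_{\infty}}|f_{\infty}|^{p}d\upsilon_{\infty}\big)^{1/p}+\big(\int_{X_{\infty}}|\nabla f_{\infty}|^{p}d\upsilon_{\infty}\big)^{1/p}<\infty$ (using $f_{\infty}\in L^{\hat{p}}(X_{\infty})\subset L^{p}(X_{\infty})$ and $V_{\infty}\in L^{p}(TX_{\infty})$), and the asserted strong convergence of $f_{i(j)}$ to $f_{\infty}$ and weak convergence of $\nabla f_{i(j)}$ to $\nabla f_{\infty}$ hold by construction. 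The inequality (\ref{ii}) then follows at once from (LS) for vector fields applied to the $L^{p}$-weak convergence $\nabla f_{i(j)}\to\nabla f_{\infty}$.
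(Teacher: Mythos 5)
First, note that the paper itself offers no proof of Theorem \ref{rel}: it is quoted verbatim from \cite[Theorem $4.9$]{ho}, so there is no internal argument to compare yours against. Judged on its own terms, your overall architecture is reasonable: uniform doubling, segment and Poincar\'e inequalities from Theorem \ref{ch-co}, Proposition \ref{seg} and Theorem \ref{sp}; $L^p$-strong convergence of a subsequence of the $f_i$ via Theorem \ref{compact} with $q=\hat{p}>p$; weak compactness (WC) for the gradients; and the deduction of (\ref{ii}) from (LS). All of those steps are sound within the framework the paper sets up.

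The genuine gap is the step you yourself flag as the heart of the matter: identifying the $L^p$-weak limit $V_{\infty}$ of $\nabla f_{i(j)}$ with $\nabla f_{\infty}$ and concluding $f_{\infty}\in H_{1,p}(X_{\infty})$. The integration-by-parts argument you propose requires test fields $W_i$ on the varying spaces whose divergences exist and converge strongly in $L^{p/(p-1)}$, and the natural candidates --- the gradients of distance functions $\nabla r_{y_i}$ that appear in the very definition of $L^p$-weak convergence of vector fields --- do not qualify: $\Delta r_{y_i}$ is in general only a signed Radon measure with a singular part on the cut locus, with no uniform $L^{p/(p-1)}$ bound and no strong convergence as the spaces vary. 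One must instead work with regularized fields (gradients of harmonic approximations, or of the Lipschitz approximations produced in \cite{holip}), prove that their divergences converge, and prove that such fields are dense enough to detect the gradient; equivalently, one must show directly that $|V_{\infty}|$ dominates a $p$-weak upper gradient of $f_{\infty}$. Note also that Mazur's lemma cannot be applied across a sequence of different underlying spaces, so the Fuglede-type alternative you mention likewise needs the transfer machinery of \cite{holip, ho}. This identification is precisely the nontrivial content of \cite[Theorem $4.9$]{ho}; as written, your proposal defers it rather than proves it.
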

\begin{theorem}\cite[Theorem $4.20$]{ho}\label{plap}
The function $\left((X, \upsilon), p\right) \mapsto \lambda_{1, p}(X)$ from $\overline{M(n, K, d)} \times (1, \infty)$ to $(0, \infty]$ is continuous. 
\end{theorem}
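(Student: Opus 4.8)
The plan is to establish the two one-sided estimates separately along sequences. Fix $(X_\infty,\upsilon_\infty)\in\overline{M(n,K,d)}$ and $p_\infty\in(1,\infty)$, and let $(X_i,\upsilon_i)\to(X_\infty,\upsilon_\infty)$ and $p_i\to p_\infty$; it suffices to prove $\limsup_i\lambda_{1,p_i}(X_i)\le\lambda_{1,p_\infty}(X_\infty)$ and $\liminf_i\lambda_{1,p_i}(X_i)\ge\lambda_{1,p_\infty}(X_\infty)$. Before that I would record the uniform structure available on the whole class: by Theorem \ref{ch-co} every member of $\overline{M(n,K,d)}$ is a geodesic space satisfying the doubling condition for a fixed $\kappa=\kappa(n,K,d)$ and the segment inequality for a fixed $\lambda=\lambda(n,K,d)$, hence by Proposition \ref{seg} the $(1,1)$-Poincar\'e inequality for a fixed $\tau=\tau(n,K,d)$, hence (Remark \ref{op}) the $(1,p)$-Poincar\'e inequality for every $p$; then Theorem \ref{sp} and Remark \ref{spr} allow me to fix once and for all a compact interval $I=[p_\infty-\delta,p_\infty+\delta]\subset(1,\infty)$, an exponent $q\in(p_\infty,\infty)$, and a constant $\tau'>0$ such that every member of $\overline{M(n,K,d)}$ satisfies the $(q,p)$-Poincar\'e inequality for $\tau'$ for every $p\in I$. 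Remark \ref{101} then gives a uniform lower bound $\lambda_{1,p}(X)\ge 1/(C\,\mathrm{diam}\,X)$ with $C=C(n,K,d,p_\infty)$ for all $p\in I$; in particular, if $\mathrm{diam}\,X_\infty=0$ (so $X_\infty$ is a point and $\lambda_{1,p_\infty}(X_\infty)=\infty$) then $\mathrm{diam}\,X_i\to0$ forces $\lambda_{1,p_i}(X_i)\to\infty$ and the statement is trivial there, so I may assume $\mathrm{diam}\,X_\infty>0$.

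For the upper estimate, fix $\epsilon>0$ and, by Corollary \ref{wahaha}, choose a Lipschitz function $f$ on $X_\infty$ with $\|f\|_{L^{p_\infty}}=c_{p_\infty}(f)=1$ and $\int_{X_\infty}(\mathrm{Lip}\,f)^{p_\infty}\,d\upsilon_\infty\le\lambda_{1,p_\infty}(X_\infty)+\epsilon$. The key step is to construct Lipschitz functions $f_i$ on $X_i$ with uniformly bounded global Lipschitz constants, $L^s$-converging strongly to $f$ for every $s<\infty$, and realizing the gradient energy in the limit, i.e. $\limsup_i\int_{X_i}(\mathrm{Lip}\,f_i)^{p_i}\,d\upsilon_i\le\int_{X_\infty}|\nabla f|^{p_\infty}\,d\upsilon_\infty=\int_{X_\infty}(\mathrm{Lip}\,f)^{p_\infty}\,d\upsilon_\infty$ (using (\ref{y})); this is where one invokes the approximation results of \cite{ho} of the type already used in the proof of Theorem \ref{compact}, possibly after a preliminary mollification of $f$. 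Then Proposition \ref{plpl} together with its $\{L^{p_i}\}$-version (Remark \ref{kmkm}) gives $c_{p_i}(f_i)\to1$ and $\|f_i\|_{L^{p_i}}\to1$, so $f_i$ is non-constant for large $i$, and Corollary \ref{wahaha} together with Proposition \ref{1lip} yields $\lambda_{1,p_i}(X_i)\le\int_{X_i}(\mathrm{Lip}\,f_i)^{p_i}\,d\upsilon_i\,/\,c_{p_i}(f_i)^{p_i}$; letting $i\to\infty$ and then $\epsilon\to0$ gives the upper estimate.

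For the lower estimate, pass to a subsequence realizing $\ell:=\liminf_i\lambda_{1,p_i}(X_i)$; the uniform lower bound gives $\ell>0$, and I may assume $\ell<\infty$. Choose Lipschitz near-minimizers $f_i$ on $X_i$ with $\|f_i\|_{L^{p_i}}=c_{p_i}(f_i)=1$ and $\int_{X_i}(\mathrm{Lip}\,f_i)^{p_i}\,d\upsilon_i\le\lambda_{1,p_i}(X_i)+1/i$. Since $\mathrm{Lip}\,f_i$ is an upper gradient of $f_i$ and, by H\"older, $\{\mathrm{Lip}\,f_i\}$ is bounded in $L^{p_-}$ for a fixed $p_-\in I$ with $p_-<p_\infty$ chosen so that its Poincar\'e--Sobolev exponent (Theorem \ref{sp}) still exceeds $p_\infty$, the sequence $\{f_i\}$ is bounded in $H_{1,p_-}(X_i)$ as well as in $H_{1,p_i}(X_i)$. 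Applying Theorem \ref{compact} (with lower exponent $p_-$) together with the Rellich-type compactness of Theorem \ref{rel} in the $\{L^{p_i}\}$-framework, I extract a further subsequence and $f_\infty\in H_{1,p_\infty}(X_\infty)$ with $f_i\to f_\infty$ strongly in $L^r$ for some $r>p_\infty$ and $\nabla f_i\to\nabla f_\infty$ weakly in the $\{L^{p_i}\}$-sense; the lower semicontinuity of the gradient energy then gives $\ell=\lim_i\int_{X_i}(\mathrm{Lip}\,f_i)^{p_i}\,d\upsilon_i\ge\int_{X_\infty}|\nabla f_\infty|^{p_\infty}\,d\upsilon_\infty$, while Proposition \ref{plpl} in its $\{L^{p_i}\}$-version forces $\|f_\infty\|_{L^{p_\infty}}=c_{p_\infty}(f_\infty)=1$. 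By density of Lipschitz functions in $H_{1,p_\infty}(X_\infty)$ (Remark \ref{op}) and Corollary \ref{wahaha}, this admissible $f_\infty$ satisfies $\int_{X_\infty}|\nabla f_\infty|^{p_\infty}\,d\upsilon_\infty\ge\lambda_{1,p_\infty}(X_\infty)$, hence $\ell\ge\lambda_{1,p_\infty}(X_\infty)$. Combining the two estimates proves the theorem.

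The main obstacle is the variable-exponent analysis. Both steps need the $\{L^{p_i}\}$-versions of the convergence machinery of \cite{ho}: a Rellich compactness producing a limit lying in $H_{1,p_\infty}(X_\infty)$ with weakly $\{L^{p_i}\}$-convergent gradients, and the accompanying lower semicontinuity $\liminf_i\|V_i\|_{L^{p_i}}^{p_i}\ge\|V_\infty\|_{L^{p_\infty}}^{p_\infty}$ for vector fields, together with the continuity of $c_{p_i}$ in Proposition \ref{plpl} and of $\|\cdot\|_{L^{p_i}}$ in (\ref{glp}); and, in the upper estimate, the delicate construction of Lipschitz approximants on the $X_i$ whose gradient energies actually converge to that of $f$ rather than merely stay bounded. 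These are precisely the points where the fine Cheeger--Colding structure theory of tangent cones on limit spaces, as developed in \cite{ho}, is indispensable; once they are available, the remaining argument is the soft two-sided compactness argument described above.
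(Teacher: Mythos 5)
You should first note that the paper does not prove Theorem \ref{plap} at all: it is imported verbatim as \cite[Theorem 4.20]{ho}, so there is no in-paper proof to match. Judged on its own, your two-sided compactness scheme is sound and is exactly the style of argument this paper uses for the neighbouring results (the upper bound in the proof of part (1) of Theorem \ref{mthm}, the lower bound in Claim \ref{987}, and Theorem \ref{rrrr}), and presumably also the strategy of \cite{ho} itself. Two remarks on where you lean on unstated inputs. First, the ``key step'' of your upper estimate --- producing Lipschitz $f_i$ on $X_i$ with uniformly bounded Lipschitz constants whose gradient energies converge to that of $f$ --- is not what the proof of Theorem \ref{compact} supplies (that argument truncates functions already living on the $X_i$); the correct source is \cite[Theorem 4.2]{holip}, which is precisely what the paper invokes for the analogous step in the proof of part (1) of Theorem \ref{mthm}. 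Second, your lower estimate as written requires variable-exponent versions of Rellich compactness and of the lower semicontinuity $\liminf_i\|\nabla f_i\|_{L^{p_i}}^{p_i}\ge\|\nabla f_\infty\|_{L^{p_\infty}}^{p_\infty}$, which you flag but do not establish. These can be avoided entirely by the fixed-exponent device the paper uses in Claim \ref{987}: for any fixed $s<p_\infty$ one has $s<p_i$ eventually, so H\"older gives $\bigl(\int_{X_i}(\mathrm{Lip}f_i)^{s}d\upsilon_i\bigr)^{1/s}\le\bigl(\lambda_{1,p_i}(X_i)+1/i\bigr)^{1/p_i}$; applying Theorem \ref{rel} and (\ref{ii}) at the fixed exponent $s$ and then letting $s\nearrow p_\infty$ yields $\int_{X_\infty}|\nabla f_\infty|^{p_\infty}d\upsilon_\infty\le\liminf_i\lambda_{1,p_i}(X_i)$ with only the fixed-exponent machinery already stated in the paper, while Proposition \ref{plpl} and Remark \ref{kmkm} handle the normalization as you describe. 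With that substitution your argument closes; as it stands, the variable-exponent steps are a genuine (if acknowledged) debt to \cite{ho}, which is the very reference the theorem is quoted from.
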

\section{Proof of main theorems}
We are now in a position to prove Theorem \ref{mthm}.  

\textit{Proof of $(2)$ of Theorem \ref{mthm}.}

By Theorem \ref{plap}, it suffices to check that if
 $p_j \to \infty$ and $(X_j, \upsilon_j) \to (X_{\infty}, \upsilon_{\infty})$ in $\overline{M(n, K, d)}$,
then 
\begin{equation}\label{last}
\lim_{j \to \infty}\lambda_{1, p_j}(X_j)=\frac{2}{\mathrm{diam}\,X_{\infty}}
\end{equation}
holds under the assumption that $(X_j, \upsilon_j) \in M(n, K, d)$ holds for every $j<\infty$.
We give a proof of (\ref{last}) by separating the following two cases:

\underline{\textit{The case of $\mathrm{diam}\,X_{\infty}>0$.}}

Note that the following argument is essentially due to Grosjean \cite{gros},
however, in our setting, it is little more delicate than the proof of \cite[Theorem $1.1$]{gros}
because we need several results stated in Section $2$.
\begin{claim}\label{098}
We have
\[\limsup_{j \to \infty}\left(\lambda_{1, p_j}(X_j)\right)^{1/p_j} \le\frac{2}{\mathrm{diam}\,X_{\infty}}.\]
\end{claim}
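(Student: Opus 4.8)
The plan is to construct, for large $j$, an explicit competitor Lipschitz function on $X_j$ whose Rayleigh quotient is close to $(2/\mathrm{diam}\,X_\infty)^{p_j}$, following Grosjean's idea: the extremal function for the $\infty$-eigenvalue problem is (up to normalization) the distance to the midpoint of an almost-diameter-realizing geodesic. First I would pick points $x_\infty, y_\infty \in X_\infty$ with $\overline{x_\infty,y_\infty}$ close to $\mathrm{diam}\,X_\infty$, and choose $x_j \to x_\infty$, $y_j \to y_\infty$, so that $\overline{x_j,y_j} \to \overline{x_\infty,y_\infty}$. Let $D:=\mathrm{diam}\,X_\infty>0$. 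I would then set $u_j := \min\{r_{x_j}, \overline{x_j,y_j} - r_{x_j}\}$ (roughly the distance to the midpoint along the near-geodesic), a $1$-Lipschitz function on $X_j$, so $\mathbf{Lip}\,u_j \le 1$, hence $\int_{X_j}(\mathrm{Lip}\,u_j)^{p_j}\,d\upsilon_j \le 1$. By Propositions \ref{lipinf}, \ref{seg} (via Theorem \ref{ch-co}), $\mathbf{Lip}\,u_j = \|\mathrm{Lip}\,u_j\|_{L^\infty}$, so this bound on the numerator is exactly what we want.

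Next I would control the denominator from below, i.e. show $c_{p_j}(u_j) = \|u_j - a_{p_j}(u_j)\|_{L^{p_j}}$ stays bounded below by something like $(D/2)(1 \pm o(1))$. The point is that $u_j$ ranges over $[0, \overline{x_j,y_j}/2]$, its sup is approximately $D/2$, and since $\upsilon_j$ is a probability measure, for any shift $c$ the $L^{p_j}$-norm of $u_j - c$ is at least... — here I would use that the essential range of $u_j$ near the endpoints forces $\|u_j-c\|_{L^{p_j}}$ to be close to $D/2$ as $p_j\to\infty$, exploiting that $\|g\|_{L^{p_j}} \to \|g\|_{L^\infty}$ for fixed $g$ and uniform-in-$j$ measure estimates from the doubling/segment-inequality structure (Theorem \ref{ch-co}, Proposition \ref{cm}). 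Then the competitor $f_j := u_j / c_{p_j}(u_j)$ (after recentering by $a_{p_j}$) has $\|f_j\|_{L^{p_j}} = c_{p_j}(f_j) = 1$ up to normalization, so by Corollary \ref{wahaha},
\[
\lambda_{1,p_j}(X_j) \le \frac{\int_{X_j}(\mathrm{Lip}\,u_j)^{p_j}\,d\upsilon_j}{c_{p_j}(u_j)^{p_j}} \le \frac{1}{c_{p_j}(u_j)^{p_j}},
\]
whence $(\lambda_{1,p_j}(X_j))^{1/p_j} \le 1/c_{p_j}(u_j) \le (2/D)(1 + o(1))$, and letting $j\to\infty$ (and then the auxiliary parameters to their limits) gives the claim.

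The case $\mathrm{diam}\,X_\infty = 0$ is excluded here since the claim is stated within "the case of $\mathrm{diam}\,X_\infty>0$", so I would not need to address it. (If it did arise, $X_\infty$ is a point, and one would argue $\lambda_{1,p_j}(X_j)^{1/p_j}\to\infty$ separately, but that is not part of this statement.)

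The main obstacle I anticipate is the lower bound on $c_{p_j}(u_j)$: one must show that recentering by the optimal constant $a_{p_j}(u_j)$ cannot shrink the $L^{p_j}$-norm much below $D/2$, uniformly in $j$, as $p_j\to\infty$. This requires quantitative control — independent of $j$ — on how much $\upsilon_j$-mass sits near each end of the range of $u_j$; I expect to get this from the segment inequality and the Colding–Minicozzi volume estimate in Theorem \ref{ch-co} and Proposition \ref{cm}, which give $\upsilon_j(B_\rho(x_j)) \ge \Psi$-type lower bounds that pass to the limit. Handling the interplay between the three limits (the $j\to\infty$ Gromov–Hausdorff limit, the $p_j\to\infty$ limit, and the choice of near-diameter points) carefully, in the right order, is the delicate bookkeeping part, but conceptually it is Grosjean's argument transplanted via the tools of Section \ref{}.
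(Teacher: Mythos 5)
There is a genuine gap, and it is not just bookkeeping: your test function cannot give the constant $2/D$. With $u_j=\min\{r_{x_j},\overline{x_j,y_j}-r_{x_j}\}$ you have $0\le u_j\le \overline{x_j,y_j}/2\approx D/2$, so taking the competitor constant $c=\overline{x_j,y_j}/4$ gives
\[
c_{p_j}(u_j)\;\le\;\bigl\|u_j-\tfrac{\overline{x_j,y_j}}{4}\bigr\|_{L^{p_j}}\;\le\;\bigl\|u_j-\tfrac{\overline{x_j,y_j}}{4}\bigr\|_{L^{\infty}}\;\le\;\tfrac{D}{4}+o(1),
\]
since $\upsilon_j$ is a probability measure. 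Hence your key step, the lower bound $c_{p_j}(u_j)\ge (D/2)(1-o(1))$, is false no matter how much mass control you extract from the doubling/segment structure: as $p_j\to\infty$ the quantity $c_{p_j}$ is governed by half the \emph{oscillation} of the test function, not by its supremum, and your tent function has oscillation only $\approx D/2$. So the argument as written proves at best $\limsup_j(\lambda_{1,p_j}(X_j))^{1/p_j}\le 4/\mathrm{diam}\,X_\infty$, which is weaker than the claim. (Your use of Corollary \ref{wahaha} and the numerator bound $\int(\mathrm{Lip}\,u_j)^{p_j}d\upsilon_j\le 1$ are fine; the defect is entirely in the choice of $u_j$.)

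To repair it you need a competitor whose range spans $\approx D$ while staying $1$-Lipschitz. One option in your spirit is $u_j:=r_{x_j}$ (distance from one endpoint of a diameter of $X_j$): then for every constant $c$ either the set $\{u_j\le\epsilon\}$ or the set $\{u_j\ge d_j-\epsilon\}$ forces $\|u_j-c\|_{L^{p_j}}\ge m^{1/p_j}(d_j/2-\epsilon)$, where $m=m(n,K,d,\epsilon)>0$ is a uniform lower volume bound for small balls (Bishop--Gromov via Theorem \ref{ch-co}), and this does yield $\liminf_j c_{p_j}(u_j)\ge \mathrm{diam}\,X_\infty/2$. The paper instead follows Grosjean's original device: take the two $1$-Lipschitz bumps $\delta_{j,i}(x)=\max\{d_j/2-\overline{x_{j,i},x},0\}$ supported in the disjoint balls $B_{d_j/2}(x_{j,1})$, $B_{d_j/2}(x_{j,2})$ around the endpoints of a diameter, and combine them with opposite signs and coefficients chosen so that the constraint $\int|f|^{p_j-2}f\,d\upsilon_j=0$ holds exactly; this gives
\[
\bigl(\lambda_{1,p_j}(X_j)\bigr)^{1/p_j}\le \max_i\left\{\left(\frac{1}{\upsilon_j(B_{d_j/2}(x_{j,i}))}\int_{B_{d_j/2}(x_{j,i})}|\delta_{j,i}|^{p_j}d\upsilon_j\right)^{-1/p_j}\right\},
\]
and then $L^p$-strong convergence of $\delta_{j,i}$ and of ball volumes, followed by $p\to\infty$, yields $2/d_\infty$ without any need for the $c_p$ lower-bound analysis. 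Either route works; the point you must internalize is that the sign-change (centering) constraint costs you half the oscillation, so a single nonnegative bump of height $D/2$ is not enough.
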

The proof is as follows.
For every $j \le \infty$, let $d_j:=\mathrm{diam}\,X_j$, let $x_{j, 1}$ and $x_{j, 2}$ be points in $X_j$ with $\overline{x_{j, 1}, x_{j, 2}}=d_j$, and let $\delta_{j, 1}$ and $\delta_{j, 2}$ be the $1$-Lipschitz functions on $X_j$ defined by
\[\delta_{j, i}(x):= \max \left\{ \frac{d_j}{2}-\overline{x_{j, i}, x}, 0\right\}.\]
Without loss of generality, we can assume that $x_{j, i} \to x_{\infty, i}$ holds for every $i=1, 2$.
Then by an argument similar to the proof of \cite[Theorem $1.1$]{gros}, we see that
\[\left(\lambda_{1, p_j}(X_j)\right)^{1/p_j}\le \max_i \left\{ \left(\frac{1}{\upsilon_j(B_{d_j/2}(x_{j, i}))}\int_{B_{d_j/2}(x_{j, i})}|\delta_{j, i}|^{p_j}d\upsilon_j \right)^{-1/p_j}\right\}\]
holds for every $j< \infty$.
Therefore, since $\delta_{j, i}$ $L^p$-converges strongly to $\delta_{\infty, i}$ on $X_{\infty}$ for every $i=1, 2$ and every $1<p<\infty$ (see for instance \cite[Proposition $3.32$]{ho}),
the H$\ddot{\text{o}}$lder inequality yields that for every $1<p<\infty$, we have
\[\limsup_{j \to \infty}\left( \lambda_{1, p_j}(X_j)\right)^{1/p_j} \le \max_i \left\{ \left(\frac{1}{\upsilon_{\infty}(B_{d_{\infty}/2}(x_{\infty, i}))}\int_{B_{d_{\infty}/2}(x_{\infty, i})}|\delta_{\infty, i}|^{p}d\upsilon_{\infty} \right)^{-1/p}\right\}. \]
By letting $p \to \infty$, we have 
\[\limsup_{j \to \infty}\left( \lambda_{1, p_j}(X_j)\right)^{1/p_j} \le \max_i \{||\delta_{\infty, i}||^{-1}_{L^{\infty}(X_{\infty})}\} = \frac{2}{d_{\infty}}.\]
Thus, we have Claim \ref{098}.

\begin{claim}\label{987}
We have 
\[\liminf_{j \to \infty}\left(\lambda_{1, p_j}(X_j)\right)^{1/p_j} \ge \frac{2}{\mathrm{diam}\,X_{\infty}}.\]
\end{claim}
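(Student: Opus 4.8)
The plan is to follow Grosjean's lower-bound argument, replacing the compactness of a single manifold by the Rellich-type compactness of Theorem \ref{rel} and the uniform geometric bounds on $M(n,K,d)$ from Theorem \ref{ch-co}. Write $d_\infty:=\mathrm{diam}\,X_\infty\in(0,d]$ and $L:=\liminf_{j\to\infty}(\lambda_{1,p_j}(X_j))^{1/p_j}$. If $L=\infty$ there is nothing to prove, so assume $L<\infty$ (in fact $L\le 2/d_\infty$ by Claim \ref{098}) and pass to a subsequence, not relabeled, along which $(\lambda_{1,p_j}(X_j))^{1/p_j}\to L$. First I would pick, via Corollary \ref{wahaha}, a Lipschitz function $u_j$ on $X_j$ with $||u_j||_{L^{p_j}}=c_{p_j}(u_j)=1$ and $\int_{X_j}(\mathrm{Lip}\,u_j)^{p_j}\,d\upsilon_j\le(1+j^{-1})\lambda_{1,p_j}(X_j)$, so that $||\mathrm{Lip}\,u_j||_{L^{p_j}}\to L$ as well. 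Since $\upsilon_j$ is a probability measure, the H\"older inequality gives, for every fixed $q\in(1,\infty)$ and all large $j$, $||u_j||_{L^q}\le 1$ and $|||\nabla u_j|||_{L^q}=||\mathrm{Lip}\,u_j||_{L^q}\le||\mathrm{Lip}\,u_j||_{L^{p_j}}$; hence $\sup_j||u_j||_{H_{1,q}(X_j)}<\infty$ for each such $q$.

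Applying Theorem \ref{rel} together with a diagonal argument over $q=2,3,4,\dots$, I may assume, after passing to a further subsequence, that there is a function $u_\infty$ with $u_j\to u_\infty$ $L^q$-strongly and $\nabla u_j\to\nabla u_\infty$ $L^q$-weakly on $X_\infty$ for every $q\in(1,\infty)$. Lower semicontinuity of the $L^q$-norm under $L^q$-weak convergence (property (LS) for vector fields) then yields $|||\nabla u_\infty|||_{L^q(X_\infty)}\le\liminf_j||\mathrm{Lip}\,u_j||_{L^{p_j}}=L$ for every $q$, so $|\nabla u_\infty|\in L^\infty(X_\infty)$ with $|||\nabla u_\infty|||_{L^\infty}\le L$. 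By Theorem \ref{ch-co}, $(X_\infty,\upsilon_\infty)$ satisfies the doubling condition and the segment inequality; granting for the moment that $u_\infty$ is Lipschitz (established in the next paragraph), Proposition \ref{lipinf} gives $\mathbf{Lip}\,u_\infty=|||\nabla u_\infty|||_{L^\infty}\le L$.

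Next I would transfer the oscillation bound to $u_\infty$. Since $\upsilon_j$ is a probability measure, $c_{p_j}(u_j)\le||u_j-\frac{1}{2}(\max_{X_j}u_j+\min_{X_j}u_j)||_{L^{p_j}}\le\frac{1}{2}(\max_{X_j}u_j-\min_{X_j}u_j)$, so $\max_{X_j}u_j-\min_{X_j}u_j\ge 2$ for every $j$. To see that this survives in the limit I need $u_j\to u_\infty$ uniformly, not merely in $L^q$, and this is where the uniform Sobolev-type embedding on $M(n,K,d)$ enters (this is what the results of \cite{holip,ho} and the telescope estimates used in the proof of Theorem \ref{compact} provide): since $||u_j||_{L^{p_j}}+|||\nabla u_j|||_{L^{p_j}}$ is bounded and $p_j\to\infty$, the $u_j$ are uniformly bounded and uniformly H\"older continuous, so by the Arzel\`a-Ascoli theorem in the Gromov-Hausdorff sense they subconverge uniformly to a continuous function, which must be $u_\infty$; being continuous with upper gradient in $L^\infty$ on a geodesic space satisfying the doubling condition and the $(1,1)$-Poincar\'e inequality, $u_\infty$ is Lipschitz, as needed above. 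Uniform convergence gives $\max_{X_\infty}u_\infty-\min_{X_\infty}u_\infty=\lim_j(\max_{X_j}u_j-\min_{X_j}u_j)\ge 2$.

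Finally, choosing $a,b\in X_\infty$ with $u_\infty(a)=\max_{X_\infty}u_\infty$ and $u_\infty(b)=\min_{X_\infty}u_\infty$, I obtain $2\le u_\infty(a)-u_\infty(b)\le\mathbf{Lip}\,u_\infty\cdot\overline{a,b}\le L\,d_\infty$, hence $L\ge 2/d_\infty$, which is Claim \ref{987}. The main obstacle is the oscillation-transfer step: $L^q$-strong convergence for a fixed $q$ does not control $L^\infty$, and the $L^{p_j}$-information (with $p_j\to\infty$) is lost upon passing to an $L^q$-limit, so one genuinely needs uniform equicontinuity of the $u_j$ over the whole class $M(n,K,d)$; pinning down the precise form of that uniform estimate, and the attendant verification that the limit $u_\infty$ is honestly Lipschitz, is the delicate point.
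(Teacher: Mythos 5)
Your argument is correct, and while it shares all of the analytic infrastructure with the paper's proof, the endgame is genuinely different. The paper follows Grosjean literally: it takes a first eigenfunction $f_j$, splits $X_j$ into the nodal domains $\Omega_j^{\pm}=f_j^{-1}(\mathbf{R}_{\gtrless 0})$, works with the first \emph{Dirichlet} eigenfunctions $f_j^{\pm}$ on these domains normalized by $\|f_j^{\pm}\|_{L^\infty}=1$, passes to Lipschitz limits $f_\infty^{\pm}$ with $\mathbf{Lip}\,f_\infty^{\pm}\le L$, and then extracts the factor $2/\mathrm{diam}\,X_\infty$ from the geometric fact that the two disjoint nonempty sets $\Omega^{\pm}=(f_\infty^{\pm})^{-1}(\mathbf{R}_{>0})$ cannot both have inradius exceeding $\tfrac12\mathrm{diam}\,X_\infty$, each inradius being bounded below by $1/\mathbf{Lip}\,f_\infty^{\pm}$. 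You instead work directly with near-minimizers normalized by $c_{p_j}(u_j)=1$ and replace the whole nodal-domain/inradius mechanism by the elementary observation that, for a probability measure, $\mathrm{osc}(u_j)\ge 2c_{p_j}(u_j)=2$, so that $2\le\mathrm{osc}(u_\infty)\le\mathbf{Lip}\,u_\infty\cdot\mathrm{diam}\,X_\infty\le L\,\mathrm{diam}\,X_\infty$. This buys you a shorter and more self-contained argument: you avoid the existence and regularity theory of first (Dirichlet) eigenfunctions of the $p$-Laplacian on open subsets and the inequality (\ref{werwer}), which the paper only sketches by reference to Grosjean. The price is the same in both proofs and you correctly identify it: one must upgrade $L^q$-strong convergence to uniform convergence in order to pass sup-norm information ($\|f_j^{\pm}\|_{L^\infty}=1$ in the paper, $\mathrm{osc}(u_j)\ge 2$ in yours) to the limit, and in both cases this rests on the quantitative Morrey-type embedding of \cite{HK2} applied with exponent $p_j>\kappa$, together with Theorem \ref{rel}, (LS), Proposition \ref{lipinf}, and the telescope argument showing the limit function is Lipschitz. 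Your justification of the uniform H\"older bound is stated somewhat loosely, but no more so than the paper's own ``it is easy to check,'' and the exponent $1-\kappa/p_j$ and constant are indeed uniformly controlled once $p_j$ is large.
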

The proof is as follows.
For every $j<\infty$, let $f_{j}$ be a first eigenfunction for $\lambda_{1, p_j}(X_j)$, let
$\Omega^+_{j}:=f_{j}^{-1}(\mathbf{R}_{>0})$, and let $\Omega^-_{j}:=f_{j}^{-1}(\mathbf{R}_{<0})$.
Let $f_{j}^{\pm} \in H_{1, p_j}(\Omega^{\pm}_{j})$ be positive valued eigenfunctions associated to the first Dirichlet eigenvalues $\lambda_{1, p_j}^D(\Omega^{\pm}_{j})$ of the $p$-Laplacian on $\Omega_j^{\pm}$ with
\begin{equation}\label{inf}
||f_{j}^{\pm}||_{L^{\infty}(\Omega^{\pm})}=1,
\end{equation}
respectively (we will omit to write `respectively' below for simplicity).

We extend $f_{j}^{\pm}$ by $0$ outside $\Omega_{j}^{\pm}$.
Thus, we have $f_{j}^{\pm} \in H_{1, p_j}(X_j)$.
For every $1<p<\infty$, by an argument similar to the proof of \cite[Theorem $1.1$]{gros}, we have
\begin{equation}\label{werwer}
\left(\int_{X_j}\left(\mathrm{Lip}f_{j}^{\pm}\right)^{p}d\upsilon_j\right)^{1/p}\le \left( \lambda_{1, p_j}(X_j)\right)^{1/p_j}
\end{equation}
for every sufficiently large $j$.

Thus, by Theorem \ref{rel} and Claim \ref{098}, without loss of generality, we can assume that there exist Borel functions $f_{\infty}^{\pm}$ on $X_{\infty}$ such that $f_{\infty}^{\pm} \in H_{1, p}(X_{\infty})$ hold for every $1<p<\infty$, that $f_{j}^{\pm}$ $L^p$-converge strongly to $f_{\infty}^{\pm}$ on $X_{\infty}$ for every $1<p<\infty$, and that $\nabla f_{j}^{\pm}$ $L^p$-converge weakly to $\nabla f_{j}^{\pm}$ on $X_{\infty}$ for every $1<p<\infty$.
Therefore, by (\ref{y}), (\ref{ii}), and (\ref{werwer}), we see that 
\[\left(\int_{X_{\infty}}\left(\mathrm{Lip} f_{\infty}^{\pm}\right)^pd\upsilon_{\infty}\right)^{1/p}\le \liminf_{j \to \infty}\left(\int_{X_j} \left(\mathrm{Lip} f_{j}^{\pm}\right)^pd\upsilon_j\right)^{1/p}\le \liminf_{j \to \infty}\left(\lambda_{1, p_j}(X_j)\right)^{1/p_j}\]
hold for every $1<p<\infty$. 
Thus, by letting $p \to \infty$, we have 
\begin{equation}\label{766667}
||\mathrm{Lip} f_{\infty}^{\pm}||_{L^{\infty}(X_{\infty})} \le \liminf_{j \to \infty}\left( \lambda_{1, p_j}(X_j)\right)^{1/p_j}.
\end{equation}

By Proposition \ref{seg} and Theorem \ref{ch-co}, since  
\[\frac{1}{\upsilon_{\infty}(B_r(w))}\int_{B_r(w)}\left| f_{\infty}^{\pm}- \frac{1}{\upsilon_{\infty}(B_r(w))}\int_{B_r(w)}f_{\infty}^{\pm}d\upsilon_{\infty} \right|d\upsilon_{\infty}\le C(n, K, d)r||\mathrm{Lip}f_{\infty}^{\pm}||_{L^{\infty}(X_{\infty})}\]
holds for every $w \in X_{\infty}$ and every $r>0$, applying a telescope argument yields that there exists a Borel subset $A$ of $X_{\infty}$ such that
$\upsilon_{\infty}(X_{\infty} \setminus A)=0$ holds and that $f_{\infty}^{\pm}$ are Lipschitz on $A$.
In particular, since $A$ is dense, we see that $f_{\infty}^{\pm}$ are Lipschitz on $X_{\infty}$.

Thus, Proposition \ref{lipinf} and (\ref{766667}) give
\begin{equation}\label{8}
\mathbf{Lip}f_{\infty}^{\pm}\le \liminf_{j \to \infty}\left( \lambda_{1, p_j}(X_j)\right)^{1/p_j}.
\end{equation} 

On the other hand, it is easy to check that Haj\l asz-Koskela's quantitative Sobolev-embedding theorem (to H$\ddot{\text{o}}$lder spaces) \cite[$(25)$ of Theorem $5.1$]{HK2} yields that $\{f_j^{\pm}\}_{j<\infty}$ are asymptotically uniformly equicontinuous on $X_{\infty}$ (see \cite[Definition $3.2$]{ho} for the definition of asymptotically uniformly equicontinuous).
In particular, by \cite[Remark $3.8$]{ho}, we see that 
\begin{equation}\label{8080}
f_j^{\pm}(x_j) \to f_{\infty}^{\pm}(x_{\infty})
\end{equation}
hold for every $x_j \to x_{\infty}$.

Let $\Omega^{\pm}:=(f_{\infty}^{\pm})^{-1}(\mathbf{R}_{>0})$.
Then by (\ref{inf}) and (\ref{8080}), we have $\Omega^{\pm} \neq \emptyset$.
Since
\[f_{j}^+f_{j}^- \equiv 0\]
holds on $X_j$ for every $j<\infty$, by letting $j \to \infty$, we see that 
\[f_{\infty}^+f_{\infty}^- \equiv 0\]
holds on $X_{\infty}$.
In particular, we see that $\Omega^+$ and $\Omega^-$ are pairwise disjoint.
Thus, we see that
\begin{equation}\label{hhh}
\frac{2}{\mathrm{diam}\,X_{\infty}} \le \max \left\{\frac{1}{r(\Omega^+)}, \frac{1}{r(\Omega^-)} \right\}
\end{equation}
holds, where 
\[r(\Omega^{\pm}):=\max_{x \in \Omega^{\pm}}\overline{x, \partial \Omega^{\pm}}\,\,\, \mathrm{and}\,\,\,\overline{x, \partial \Omega^{\pm}}:=\inf_{y \in \partial \Omega^{\pm}}\overline{x, y}.\]

Let $x^{\pm}$ be points in $X_{\infty}$ with $f_{\infty}^{\pm}(x^{\pm})=1$ and let  $y^{\pm}$ be points in $\partial \Omega^{\pm}$  with $\overline{x^{\pm}, y^{\pm}}=\overline{x^{\pm}, \partial \Omega^{\pm}}$.
Then since $f^{\pm}_{\infty}(y^{\pm})=0$, we have
\[1=|f^{\pm}_{\infty}(x^{\pm})-f^{\pm}_{\infty}(y^{\pm})| \le \mathbf{Lip}f^{\pm}_{\infty}\overline{x^{\pm}, y^{\pm}}\le  \mathbf{Lip}f^{\pm}_{\infty} r(\Omega^{\pm}).\]
Thus, (\ref{8}) yields   
\begin{equation}\label{ggggg}
\max \left\{\frac{1}{r(\Omega^+)}, \frac{1}{r(\Omega^-)} \right\} \le \liminf_{j \to \infty}\left( \lambda_{1, p_j}(X_j)\right)^{1/p_j}.
\end{equation}
Therefore, by (\ref{hhh}) and (\ref{ggggg}), we have Claim \ref{987}.

Thus, we have (\ref{last}) in this case.

\underline{\textit{The case of $\mathrm{diam}\,X_{\infty}=0$.}}

Let $\mathcal{M}$ be the set of $(X, \upsilon) \in \overline{M(n, K, 1)}$ with $\mathrm{diam}\,X=1$, and let $\mathcal{N}:=\mathcal{M} \times [2, \infty]$.
By Claims \ref{098} and \ref{987}, we see that $F$ is continuous on $\mathcal{N}$.
In particular, since $\mathcal{N}$ is compact, we have 
$C_1(n, K):=\min_{\mathcal{N}} F>0$ and $C_2(n, K):=\max_{\mathcal{N}} F<\infty.$
Note that the rescaled Riemannian manifolds $(\hat{M}_i, \mathrm{Vol}):=(M_i, (\mathrm{diam}\,M_i)^{-2}g_{M_i}, \mathrm{Vol})$ are in $\mathcal{M}$.

Thus, we have $C_1(n, K) \le (\lambda_{1, p_i}(\hat{M}_i))^{1/p_i} \le C_2(n, K)$.
Since $\lambda_{1, p_i}(\hat{M}_i)=\lambda_{1, p_i}(M_i)(\mathrm{diam}\,M_i)^{p_i}$, we have
\[\lim_{i \to \infty}\left( \lambda_{1, p_i}(X_i)\right)^{1/p_i}=\infty.\] 
Therefore, we have also (\ref{last}) in this case. \,\,\,\,\, $\Box$

\textit{Proof of $(3)$ of Theorem \ref{mthm}.}

This is a direct consequence of $(2)$ of Theorem \ref{mthm}, Theorems \ref{rrrr} and \ref{ch-co}. \,\,\,\,\, $\Box$

\textit{Proof of $(1)$ of Theorem \ref{mthm}.}

By Theorem \ref{plap} and $(2)$ of Theorem \ref{mthm}, it suffices to check that if $(X_i, \upsilon_i) \to (X_{\infty}, \upsilon_{\infty})$ in $\overline{M(n, K, d)}$ and $\{p_i\}_{i<\infty} \subset [1, \infty]$ with $p_i \to 1$, then
\begin{align}\label{nnnn}
\limsup_{i \to \infty}\lambda_{1, p_i}(X_i)\le h(X_{\infty})
\end{align}
holds.
Without loss of generality, we can assume $\mathrm{diam}\,X_{\infty}>0$.
Let $\epsilon>0$ and let $f_{\infty}$ be a Lipschitz function on $X_{\infty}$ with $||f_{\infty}||_{L^1}=1$, $c_1(f_{\infty})=1$, and 
\[\left|h(X_{\infty})-\int_{X_{\infty}}\mathrm{Lip}f_{\infty}d\upsilon_{\infty}\right|<\epsilon.\]
By \cite[Theorem $4.2$]{holip}, without loss of generality, there exists a sequence $\{f_i\}_i$ of Lipschitz functions $f_i$ on $X_i$ such that 
$\sup_{i}\mathbf{Lip}f_i<\infty$ and $f_i, df_i$ $L^p$-converge strongly to $f_{\infty}, df_{\infty}$ on $X_{\infty}$ for every $1<p<\infty$, respectively.
Then, by Proposition \ref{plpl}, we have 
\[\limsup_{i \to \infty}\lambda_{1, p_i}(X_i) \le \lim_{i \to \infty}(c_{p_i}(f_i))^{-p_i}\int_{X_i}(\mathrm{Lip}f_i)^{p_i}d\upsilon_i=\int_{X_{\infty}}\mathrm{Lip}f_{\infty}d\upsilon_{\infty} \le h(X_{\infty}) + \epsilon.\]
Since $\epsilon$ is arbitrary, we have (\ref{nnnn}).  \,\,\,\,\, $\Box$

We give a quantitative version of Grosjean's result \cite[Theorem $1.1$]{gros}:
\begin{corollary}\label{grosje}
Let $n \in \mathbf{N}$ and let $K \in \mathbf{R}$.
Then for every $\epsilon>0$, there exists a positive number $p_0:=p_0(n, K, \epsilon)>1$ such that 
\[\left| \mathrm{diam}\,M \left(\lambda_{1, p}(M)\right)^{1/p}-2\right|<\epsilon\]
holds for every $p>p_0$ and every $n$-dimensional compact Riemannian manifold $M$ with (\ref{3444}).
\end{corollary}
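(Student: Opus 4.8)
The plan is to derive the corollary from Theorem \ref{mthm} by a rescaling-and-compactness argument, exploiting the fact that the quantity $\mathrm{diam}\,M\,(\lambda_{1,p}(M))^{1/p}$ and the hypothesis $(\mathrm{diam}\,M)^2\mathrm{Ric}_M \ge K(n-1)$ are both scale invariant. First I would record this scale invariance: replacing $g_M$ by $c^2 g_M$ multiplies $\mathrm{diam}\,M$ by $c$, divides $\mathrm{Lip}f$ by $c$ pointwise, leaves the Riemannian probability measure unchanged, hence multiplies $\lambda_{1,p}(M)$ by $c^{-p}$ and turns $\mathrm{Ric}_M \ge K(n-1)g_M$ into $\mathrm{Ric}_M \ge K(n-1)c^{-2}(c^2 g_M)$. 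Consequently, for $c=(\mathrm{diam}\,M)^{-1}$ the rescaled manifold $\hat{M}:=(M,(\mathrm{diam}\,M)^{-2}g_M)$ has unit diameter, satisfies $\mathrm{Ric}_{\hat M}\ge K(n-1)$ so that $(\hat M,\mathrm{Vol})\in M(n,K,1)$, and satisfies $(\lambda_{1,p}(\hat M))^{1/p}=\mathrm{diam}\,M\,(\lambda_{1,p}(M))^{1/p}$.

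Next I would argue by contradiction. If the statement failed, there would be $\epsilon_0>0$, a sequence $p_i\to\infty$ (so $p_i>1$ for large $i$) and $n$-dimensional compact Riemannian manifolds $M_i$ with $(\mathrm{diam}\,M_i)^2\mathrm{Ric}_{M_i}\ge K(n-1)$ and $|\mathrm{diam}\,M_i\,(\lambda_{1,p_i}(M_i))^{1/p_i}-2|\ge\epsilon_0$. Passing to $\hat M_i$ as above, this reads $|(\lambda_{1,p_i}(\hat M_i))^{1/p_i}-2|\ge\epsilon_0$ with $(\hat M_i,\mathrm{Vol})\in M(n,K,1)$. By Gromov's compactness theorem I may extract a subsequence with $(\hat M_i,\mathrm{Vol})\to(X_\infty,\upsilon_\infty)$ in $\overline{M(n,K,1)}$; continuity of the diameter under Gromov-Hausdorff convergence gives $\mathrm{diam}\,X_\infty=1$.

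Finally I would invoke part $(2)$ of Theorem \ref{mthm}: since $F$ is continuous on $\overline{M(n,K,1)}\times(1,\infty]$ and $p_i\to\infty$, we get
\[(\lambda_{1,p_i}(\hat M_i))^{1/p_i}=F\big((\hat M_i,\mathrm{Vol}),p_i\big)\longrightarrow F\big((X_\infty,\upsilon_\infty),\infty\big)=\frac{2}{\mathrm{diam}\,X_\infty}=2,\]
contradicting $|(\lambda_{1,p_i}(\hat M_i))^{1/p_i}-2|\ge\epsilon_0$. This yields the existence of the claimed $p_0=p_0(n,K,\epsilon)$.

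I do not expect a genuine obstacle here: the argument is essentially the one already used in the paper to handle the case $\mathrm{diam}\,X_\infty=0$ in the proof of $(2)$ of Theorem \ref{mthm} (via the compact set $\mathcal{N}$), and one could equivalently phrase it directly as a uniform-continuity statement for $F$ on the compact set $\{(Y,\nu)\in\overline{M(n,K,1)}:\mathrm{diam}\,Y=1\}\times[2,\infty]$. The only point requiring care is the bookkeeping of the scaling behavior of $\lambda_{1,p}$, $\mathrm{diam}$ and the Ricci lower bound — precisely the scale-invariant normalization $(\mathrm{diam}\,M)^2\mathrm{Ric}_M\ge K(n-1)$ the paper has already flagged as essential — after which Theorem \ref{mthm} delivers the conclusion immediately.
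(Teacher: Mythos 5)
Your proposal is correct and follows essentially the same route as the paper: a contradiction argument that rescales each $M_i$ to unit diameter (so the scale-invariant hypothesis (\ref{3444}) becomes $\mathrm{Ric}\ge K(n-1)$), extracts a Gromov-Hausdorff convergent subsequence in $\overline{M(n,K,1)}$, and applies the continuity of $F$ on $\overline{M(n,K,1)}\times(1,\infty]$ from Theorem \ref{mthm}. The paper leaves the rescaling and the compactness extraction implicit, whereas you spell them out, but the argument is the same.
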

\begin{proof}
The proof is done by contradiction.
Assume that the assertion is false.
Then, there exist a positive number $\tau>0$, a divergent sequence $p_i \to \infty$, and a sequence $(M_i, \mathrm{Vol}) \to (M_{\infty}, \upsilon)$ in $\overline{M(n, K, 1)}$ such that $\mathrm{diam}\,M_i=1$  and  
\begin{equation}\label{tito}
\left| \left(\lambda_{1, p_i}(M_i)\right)^{1/p_i}-2\right| \ge \tau
\end{equation}
hold for every $i<\infty$.
Since $\mathrm{diam}\,M_{\infty}=1$, by letting $i \to \infty$ in (\ref{tito}), Theorem \ref{mthm} yields
\[0=|2-2|\ge \tau>0.\]
This is a contradiction.
\end{proof}
\begin{remark}\label{cheeger}
In \cite{matei2}, Matei showed that the function
\[p \mapsto p \left(\lambda_{1, p}(M)\right)^{1/p}\]
is strictly increasing on $(1, \infty)$ for every compact Riemannian manifold $M$.
See \cite[Proposition $2.6$]{matei2}.
$(3)$ of Theorem \ref{mthm} yields that this holds on $[1, \infty)$.
In particular, we can reprove Matei's isoperimetric inequality (\ref{777777}) which is a generalization of Cheeger's one given in \cite{ch0} to the first eigenvalue of $p$-Laplacian:
\begin{align}\label{mama}
h(M)=\lambda_{1, 1}(M)<p \left(\lambda_{1, p}(M)\right)^{1/p}
\end{align}
for every $1<p<\infty$.
See \cite[Theorem $4.1$]{Ma} for the original proof.
Moreover, this argument with Theorem \ref{mthm} allows us to prove a weak version of (\ref{mama}) on limit spaces, i.e., 
\[h(X) \le p \left(\lambda_{1, p}(X)\right)^{1/p}\]
holds for every $1<p<\infty$ and every $(X, \upsilon) \in \overline{M(n, K, d)}$.
\end{remark}
We are now in a position to prove Theorem \ref{wawawa}.

\textit{Proof of Theorem \ref{wawawa}.}

First we discuss upper bounds.
Let $\mathcal{M}$ be as in the proof of Theorem \ref{mthm}, let $\hat{\mathcal{N}}:=\mathcal{M} \times [1, \infty]$ and let $M$ be a compact $n$-dimensional Riemannian manifold with (\ref{3444}).
Since $\hat{\mathcal{N}}$ is compact, by $(1)$ of Theorem \ref{mthm}, we have $C_3(n, K):=\max_{\hat{{\mathcal{N}}}} F < \infty$.
In particular, we see that
\begin{align}\label{mnjj}
\left(\lambda_{1, p}(M)\right)^{1/p}\mathrm{diam}\,M\le C_3(n, K)
\end{align}
holds for every $1 < p <\infty$.

Next we discuss lower bounds.
Let $C_1(n, K)$ be as in the proof of Theorem \ref{mthm}.
Then, we see that 
\begin{align}\label{mnjjk}
C_1(n, K) \le \left(\lambda_{1, p}(M)\right)^{1/p}\mathrm{diam}\,M
\end{align}
holds for every $2 \le p<\infty$.

On the other hand, Theorem \ref{sp}, 
Remarks \ref{spr}, \ref{101} and Theorem \ref{ch-co} yield that 
\begin{align}\label{mnjjkk}
0<C_4(n, K) \le \left(\lambda_{1, p}(M)\right)^{1/p}\mathrm{diam}\,M
\end{align}
holds for every $1<p<2$.
(\ref{mnjj}), (\ref{mnjjk}), and (\ref{mnjjkk}) yields the assertion. \,\,\,\,\, $\Box$ 
\begin{remark}\label{8877665544}
Theorem \ref{wawawa} also holds on limit spaces.
The reason is as follows.
Let $(X, \upsilon)$ be the Gromov-Hausdorff limit compact metric measure space of a sequence of $n$-dimensional compact smooth metric measure spaces 
$(M_i, \mathrm{Vol})$ with $(\mathrm{diam}\,M_i)^2\mathrm{Ric}_{M_i} \ge K(n-1)$.
Without loss of generality, we can assume $\mathrm{diam}\,X>0$.

Then, by Theorem \ref{wawawa}, for every $1<p<\infty$ and every $i<\infty$, we have $(\lambda_{1, p}(M_i))^{1/p}\stackrel{n, K}{\asymp} (\mathrm{diam}\,M_i)^{-1}$.
Thus, by letting $i \to \infty$, we have  $(\lambda_{1, p}(X))^{1/p}\stackrel{n, K}{\asymp} (\mathrm{diam}\,X)^{-1}$.
By letting $p \to 1$, $(3)$ of Theorem \ref{mthm} yields
\[(\lambda_{1, p}(X))^{1/p}\stackrel{n, K}{\asymp} h(X) \stackrel{n, K}{\asymp}(\mathrm{diam}\,X)^{-1}.\]
\end{remark}
\begin{remark}
In \cite{Ma, NV, V}, Matei, Naber-Valtorta, and Valtorta gave the sharp lower bounds for the first eigenvalues of the $p$-Laplacian on manifolds with lower Ricci curvature bounds.
We discuss here on zero lower bound of Ricci curvature.
See Section $4$ for positive lower bounds.

In \cite{V}, we knew that
\begin{equation}\label{6565}
\left(\lambda_{1, p}(M)\right)^{1/p}\ge \frac{2\pi (p-1)^{1/p}}{p \sin (\pi/p) \mathrm{diam}\,M }
\end{equation}
holds for every nonnegatively Ricci curved compact Riemannian manifold $M$.
Thus, since it is easy to see that the right hand side of (\ref{6565}) goes to $2/\mathrm{diam}\,M$ as $p \to 1$, $(3)$ of Theorem \ref{mthm} and (\ref{6565}) allow us to reprove Gallot's estimate \cite{ga}:
\begin{align}\label{qqwws}
h(M) \ge \frac{2}{\mathrm{diam}\,M}
\end{align}
holds for every $M$ as above.
Note that the right hand side of (\ref{6565}) goes also to $2/\mathrm{diam}\,M$ as $p \to \infty$ and that by Theorem \ref{mthm}, 
we see that (\ref{6565}) and (\ref{qqwws}) hold on the Gromov-Hausdorff limit compact metric measure space of a sequence of $n$-dimensional nonnegatively Ricci curved compact Riemannian manifolds.
\end{remark}
\begin{remark}\label{99009900}
By Grosjean's result (\ref{grosjean}), it is easy to check that
\[\lim_{p \to \infty}\lambda_{1, p}(M)=
\begin{cases} \infty \,\,\,\,\,\mathrm{if}\,\mathrm{diam}\,M<2, \\
0 \,\,\,\,\,\,\,\,\mathrm{if}\,\mathrm{diam}\,M>2 \\
\end{cases}\]
holds for every compact Riemannian manifold $M$ (note that by Theorem \ref{mthm}, this also holds for every $(Y, \upsilon) \in \overline{M(n, K, d)}$).
In particular, we have
\begin{equation}\label{obs2}
\lim_{p \to \infty}\lambda_{1, p}(\mathbf{S}^n(r))=
\begin{cases} \infty \,\,\,\,\,\mathrm{if}\,r<2/\pi, \\
0 \,\,\,\,\,\,\,\,\mathrm{if}\,r>2/\pi, \\
\end{cases}
\end{equation}
where $\mathbf{S}^n(r):=\{x \in \mathbf{R}^{n+1}; |x|=r\}$.

Thus, (\ref{obs2}) tells us that we can NOT get a $\lambda_{1, p}$-version of Theorem \ref{mthm}, i.e., it is essential to study (\ref{laplaplap}) instead of $\lambda_{1, p}(X)$ in our setting.
\end{remark}
We end this section by giving a direct consequence of Theorem \ref{wawawa}:
\begin{corollary}\label{hoho}
Let $\epsilon>0$ and let $M$ be an $n$-dimensional compact Riemannian manifold with (\ref{3444}). 
Assume that  
\[\left(\lambda_{1, p}(M)\right)^{1/p}<\epsilon \]
holds for some $1 \le p \le \infty$.
Then we see that
\[\left(\lambda_{1, q}(M)\right)^{1/q}<\epsilon C(n, K)\]
holds for every $1 \le q \le \infty$.
\end{corollary}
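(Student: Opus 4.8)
The plan is to deduce everything from the two-sided, exponent-independent bound of Theorem \ref{wawawa}, which under the scale-invariant hypothesis (\ref{3444}) pins down $(\lambda_{1,r}(M))^{1/r}$ to within a multiplicative constant depending only on $n$ and $K$, uniformly in $r$.

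First I would record that, with the conventions $\lambda_{1,1}(M)=h(M)$ and $(\lambda_{1,\infty}(M))^{1/\infty}:=2(\mathrm{diam}\,M)^{-1}$ used throughout the paper, the estimate
\[
\left(\lambda_{1,r}(M)\right)^{1/r}\stackrel{n,K}{\asymp}\left(\mathrm{diam}\,M\right)^{-1}
\]
holds for \emph{every} $r\in[1,\infty]$: for $1<r<\infty$ this is exactly Theorem \ref{wawawa}, for $r=1$ it is the clause $h(M)\stackrel{n,K}{\asymp}(\mathrm{diam}\,M)^{-1}$ of the same theorem, and for $r=\infty$ it is trivial (with universal constant $2$). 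Consequently there is a constant $C_0:=C_0(n,K)>1$ such that
\[
C_0^{-1}\left(\mathrm{diam}\,M\right)^{-1}\le\left(\lambda_{1,r}(M)\right)^{1/r}\le C_0\left(\mathrm{diam}\,M\right)^{-1}\qquad(1\le r\le\infty).
\]

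The conclusion then follows by a one-line squeeze. If $(\lambda_{1,p}(M))^{1/p}<\epsilon$ for some $p\in[1,\infty]$, the lower bound with $r=p$ gives $(\mathrm{diam}\,M)^{-1}<C_0\epsilon$, and the upper bound with $r=q$ gives $(\lambda_{1,q}(M))^{1/q}<C_0^{2}\epsilon$ for every $q\in[1,\infty]$; so one takes $C(n,K):=C_0^{2}$. I do not anticipate any genuine obstacle: all the substance sits in Theorem \ref{wawawa}, and the only point requiring a little care is that the endpoint exponents $p,q\in\{1,\infty\}$ are legitimately included in the uniform bound — the case $1$ via the Cheeger-constant clause of Theorem \ref{wawawa}, the case $\infty$ by the elementary identification $(\lambda_{1,\infty})^{1/\infty}=2(\mathrm{diam}\,M)^{-1}$ — so that the single constant $C_0$ is valid across the whole range $[1,\infty]$.
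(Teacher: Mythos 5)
Your proposal is correct and is exactly the argument the paper intends: the corollary is stated there as a direct consequence of Theorem \ref{wawawa} with no written proof, and your squeeze via the uniform two-sided bound $(\lambda_{1,r}(M))^{1/r}\stackrel{n,K}{\asymp}(\mathrm{diam}\,M)^{-1}$ is the natural way to carry that out. Your explicit treatment of the endpoint exponents $r=1$ (the Cheeger-constant clause) and $r=\infty$ (the convention $(\lambda_{1,\infty})^{1/\infty}=2(\mathrm{diam}\,M)^{-1}$) correctly fills in the only detail the paper leaves implicit.
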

\section{New Lichnerowicz-Obata type theorem for limit spaces}
In this section, we establish a new Lichnerowicz-Obata type theorem for limit spaces (Theorem \ref{mey}) and give an application (Corollary \ref{finis}).
Note that Theorem \ref{mey} for $p=2$ is a direct consequence of Cheeger-Colding's result \cite[Theorem $7.9$]{ch-co3}, Colding's result \cite[Lemma $1.10$]{co1}, and Croke's result \cite[Theorem B]{croke}.
See also \cite{aub, ber, ho8}.
We use the following notation for convenience:
\[\left( \lambda_{1, \infty}(X)\right)^{1/\infty}:=\frac{2}{\mathrm{diam}\,X}.\]
\begin{theorem}\label{mey}
Let $(X, \upsilon) \in \overline{M(n, 1, \pi)}$ and let $1< p \le \infty$.
Then we have 
\begin{equation}\label{hyhr}
\left( \lambda_{1, p}(X) \right)^{1/p} \ge \left( \lambda_{1, p}(\mathbf{S}^n) \right)^{1/p}.
\end{equation}
Moreover, we see that the equality of (\ref{hyhr}) holds if and only if $\mathrm{diam}\,X=\pi$ holds. 
\end{theorem}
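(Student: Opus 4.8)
The plan is to prove the inequality \eqref{hyhr} by a Gromov-Hausdorff compactness argument combined with the Lichnerowicz-type lower bound available for smooth manifolds, and then to identify the equality case using the continuity properties in Theorem \ref{mthm} together with the Obata-type rigidity already known in the approximating sequence. First I would dispose of the case $\mathrm{diam}\,X = \pi$: by the Cheng maximal diameter theorem, any $(X,\upsilon)\in\overline{M(n,1,\pi)}$ with $\mathrm{diam}\,X=\pi$ must be the measured Gromov-Hausdorff limit of spheres, hence (using volume convergence and the non-branching/structure results) isometric to $\mathbf{S}^n$ with its normalized measure, so \eqref{hyhr} holds with equality trivially. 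This handles the ``if'' direction of the equality statement at the same time.

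For the inequality in general, write $(X,\upsilon)$ as the limit of a sequence $(M_i,\mathrm{Vol})\in M(n,1,\pi)$, i.e. $n$-dimensional compact Riemannian manifolds with $\mathrm{Ric}_{M_i}\ge (n-1)$ and $\mathrm{diam}\,M_i\le\pi$. For each $i<\infty$ and each $1<p<\infty$ one has the sharp Obata-type lower bound $\lambda_{1,p}(M_i)\ge\lambda_{1,p}(\mathbf{S}^n)$ (this is the $p$-Laplacian Lichnerowicz inequality of Matei / Naber-Valtorta / Valtorta cited in the excerpt), and for $p=\infty$ one has $2/\mathrm{diam}\,M_i\ge 2/\pi = 2/\mathrm{diam}\,\mathbf{S}^n$ directly from $\mathrm{diam}\,M_i\le\pi$. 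Taking $p$-th roots and passing to the limit $i\to\infty$, part \eqref{mthm2} of Theorem \ref{mthm} (continuity of $F$ on $\overline{M(n,K,d)}\times(1,\infty]$ with $K=1$, $d=\pi$) gives
\[
\left(\lambda_{1,p}(X)\right)^{1/p} = \lim_{i\to\infty}\left(\lambda_{1,p}(M_i)\right)^{1/p} \ge \left(\lambda_{1,p}(\mathbf{S}^n)\right)^{1/p}
\]
for all $1<p\le\infty$, which is \eqref{hyhr}.

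It remains to prove the ``only if'' direction: if equality holds in \eqref{hyhr} for some fixed $p\in(1,\infty]$ then $\mathrm{diam}\,X=\pi$. I would argue by contradiction: suppose $\mathrm{diam}\,X =: D < \pi$. The idea is to produce a strictly better lower bound than $\lambda_{1,p}(\mathbf{S}^n)$ once the diameter is bounded away from $\pi$. Concretely, consider the compact set $\mathcal{M}_D$ of spaces $(Y,\nu)\in\overline{M(n,1,\pi)}$ with $\mathrm{diam}\,Y\le D$; by \eqref{mthm2} the function $F(\cdot,p)$ is continuous, hence attains a minimum $m(D,p)$ on this compact set, and by the inequality just proved $m(D,p)\ge(\lambda_{1,p}(\mathbf{S}^n))^{1/p}$. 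If this minimum equalled $(\lambda_{1,p}(\mathbf{S}^n))^{1/p}$ it would be attained at some $(Y_0,\nu_0)$ with $\mathrm{diam}\,Y_0\le D<\pi$; but then $(Y_0,\nu_0)$ would be a limit of manifolds $M_i$ with diameters bounded by (say) $(D+\pi)/2<\pi$, and for such manifolds the \emph{equality case of the Obata-type theorem} forces $M_i$ to be isometric to $\mathbf{S}^n$ — contradicting $\mathrm{diam}\,M_i<\pi$ — unless the approaching values $\lambda_{1,p}(M_i)^{1/p}$ are strictly larger than $\lambda_{1,p}(\mathbf{S}^n)^{1/p}$, which upon passing to the limit would be strictly larger than the claimed minimum. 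The cleanest way to package this is: the function $(Y,\nu)\mapsto (\lambda_{1,p}(Y))^{1/p}$ on $\{\,(Y,\nu)\in\overline{M(n,1,\pi)} : \mathrm{diam}\,Y<\pi\,\}$ never attains the value $(\lambda_{1,p}(\mathbf{S}^n))^{1/p}$, because on the dense smooth part this follows from strict inequality in the Obata rigidity theorem (equality $\Rightarrow$ round sphere $\Rightarrow$ diameter $\pi$), and one upgrades to the limit via lower semicontinuity together with the compactness argument above.

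The main obstacle I expect is the last step: making rigorous the jump from ``strict Obata inequality on smooth manifolds'' to ``strict inequality persists on non-round limit spaces.'' The subtlety is that a limit of manifolds each satisfying $\lambda_{1,p}(M_i)>\lambda_{1,p}(\mathbf{S}^n)$ could, a priori, have $\lambda_{1,p}(X)=\lambda_{1,p}(\mathbf{S}^n)$ exactly; ruling this out requires either a quantitative (effective) version of the Obata rigidity — an estimate of the form $\lambda_{1,p}(M)\ge\lambda_{1,p}(\mathbf{S}^n)+\Phi(\pi-\mathrm{diam}\,M)$ with $\Phi>0$ for positive argument — or an independent rigidity/almost-rigidity argument on the limit space using the Cheeger-Colding splitting/structure theory (a first eigenfunction realizing the sphere eigenvalue would, via the $p$-Bochner formula in the appropriate weak sense, force a metric cone / suspension structure and hence diameter $\pi$). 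I would pursue the compactness-plus-quantitative-Obata route first, since the needed quantitative statement is essentially the diameter-pinching form of the results of Naber-Valtorta and Valtorta cited in the excerpt, and only fall back on the intrinsic splitting argument if the quantitative bound is not available in citable form; in either case one then also invokes Cheng's maximal diameter theorem in the limit (valid on $\overline{M(n,1,\pi)}$ by Cheeger-Colding) to conclude $\mathrm{diam}\,X=\pi\Rightarrow X\cong\mathbf{S}^n$, closing the equivalence.
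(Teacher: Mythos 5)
Your proof of the inequality \eqref{hyhr} and of the ``only if'' direction of the equality statement is essentially the paper's argument: for $1<p<\infty$ one invokes Matei's sharp lower bound $\lambda_{1,p}(M)\ge\lambda_{1,p}(\mathbf{S}^n)$ on the smooth approximating manifolds and passes to the limit via the continuity in Theorem \ref{mthm}, and for the rigidity one uses the diameter-quantified sharp estimate (Valtorta, \cite[Theorem $3.2.4$]{V2}) together with Theorem \ref{mthm}; the quantitative Obata-type statement you were unsure about is available and is exactly what the paper cites, so that half of your argument goes through (the case $p=\infty$ is just Myers).

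The genuine gap is in the ``if'' direction, which you dispose of in one sentence by claiming that $\mathrm{diam}\,X=\pi$ forces $X$ to be isometric to $\mathbf{S}^n$ ``by the Cheng maximal diameter theorem.'' This is false for limit spaces. Cheng's theorem is a statement about smooth manifolds; the correct maximal-diameter statement in $\overline{M(n,1,\pi)}$ is Cheeger--Colding's warped product theorem \cite[Theorem $5.14$]{ch-co}, which only yields that $X$ is a spherical suspension $\mathbf{S}^0 * Y$ over some compact geodesic space $Y$ with $\mathrm{diam}\,Y\le\pi$ --- and such suspensions need not be the round sphere (collapsed limits provide examples). Consequently the equality $\lambda_{1,p}(X)=\lambda_{1,p}(\mathbf{S}^n)$ is not ``trivial'' in this case and requires real work: the paper proves (Claim \ref{mmnj}) that the volume of metric annuli about a pole $x_0$ of the suspension coincides with that of $\mathbf{S}^n$, so the radial first eigenfunction $F\circ r_w$ of $\mathbf{S}^n$ can be transplanted to the test function $F\circ r_{x_0}$ on $X$ with the same $L^p$-deviation $c_p$ and the same energy, giving $\lambda_{1,p}(X)\le\lambda_{1,p}(\mathbf{S}^n)$, which combined with \eqref{hyhr} yields equality. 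Without an argument of this kind (or some substitute establishing the upper bound on a general spherical suspension arising as a limit), your proof of the ``if'' direction does not stand.
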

\begin{proof}
The assertion for $p=\infty$ follows from Myers's diameter theorem \cite{myers}.
Thus, we assume $1<p<\infty$.
Then, (\ref{hyhr}) follows directly from Matei's result \cite[Theorem 3.1]{Ma} and Theorem \ref{mthm}.
On the other hand, Valtorta's result \cite[Theorem $3.2.4$]{V2} and Theorem \ref{mthm} yield that if $\lambda_{1, p}(X)=\lambda_{1, p}(\mathbf{S}^n)$ holds, then $\mathrm{diam}\,X=\pi$ holds.

Therefore, we assume $\mathrm{diam}\,X=\pi$.
Then, Cheeger-Colding's warped product theorem \cite[Theorem $5.14$]{ch-co} yields that there exists a compact geodesic space $Y$ with $\mathrm{diam}\,Y \le \pi$ such that $X$ is isometric to the spherical suspension $\mathbf{S}^0 * Y$ of $Y$, where
\[\mathbf{S}^0*Y:=\left([0, \pi] \times Y\right) / \left( \{0, \pi\} \times Y \right) \]
and the distance is defined by
\begin{equation}\label{joku}
\overline{[(s, y)], [(t, z)]}:=\arccos \left( \cos s \cos t + \sin s \sin t \cos \overline{y, z} \right).
\end{equation}
Thus, we identify $X$ with $\mathbf{S}^0 * Y$.
Let $x_0:=[(0, y_0)]$ and let $x_1:=[(\pi, y_0)]$, where $y_0$ is a fixed point in $Y$.
\begin{claim}\label{mmnj}
For every $f \in L^1([0, \pi])$, we see that
\[\int_X f \circ r_{x_0}d\upsilon= \int_{\mathbf{S}^n}f \circ r_{w}d\mathrm{Vol}\]
holds for every $w \in \mathbf{S}^n$. 
\end{claim}
The proof is as follows.
By \cite[Theorem $5.2$]{hob}, there exists a positive valued $L^{\infty}$-function $g$ on $X$ such that
\begin{equation}\label{3421}
\int_X hd\upsilon = \int_0^{\pi}\int_{\partial B_t(x_0)}ghd\upsilon_{-1}dt
\end{equation}
holds for every $h \in L^1(X)$ (see \cite{ch-co2, hob} for the definition of the measure $\upsilon_{-1}$).

On the other hand, by Colding's argument in the proof of \cite[Lemma $5.10$]{co2} (or \cite{oh}), we have 
\begin{equation}\label{66778855}
\upsilon (B_R(x_0) \setminus B_r(x_0))=\mathrm{Vol} (B_R(w) \setminus B_r(w))
\end{equation}
for any $0<r<R\le \pi$.
Thus, (\ref{3421}) and (\ref{66778855}) yield that
\[\int_r^R\int_{\partial B_t(x_0)}gd\upsilon_{-1}dt=\mathrm{Vol} (B_R(w) \setminus B_r(w))\]
holds for any  $0<r<R\le \pi$.
In particular, we see that
\begin{equation}\label{llll}
 \int_{\partial B_t(x_0)}gd\upsilon_{-1}=\frac{H^{n-1}(\partial B_t(w))}{H^n(\mathbf{S}^n)}
\end{equation}
holds for a.e. $t \in [0, \pi]$.
Claim \ref{mmnj} follows from (\ref{3421}) and (\ref{llll}).  

Let $\hat{f}$ be a first eigenfunction for $\lambda_{1, p}(\mathbf{S}^n)$.
It is known that $\hat{f}$ is in $C^{1, \alpha}$ for some $\alpha>0$ and that without loss of generality, we can assume that $\hat{f}$ is radial from a point $w \in \mathbf{S}^n$, i.e., there exists a function $F$ on $[0, \pi]$ such that $\hat{f}= F \circ r_{w}$ holds (see for instance \cite[Corollary $3.1$]{Ma}).
Let $f:=F \circ r_{x_0}$.

By Claim \ref{mmnj}, we have
\begin{equation}\label{zxd}
c_p(f)=c_p(\hat{f}).
\end{equation}
On the other hand, by (\ref{joku}), it is easy to check that 
\begin{equation*}
\mathrm{Lip} f(x)=\left|\frac{dF}{dt}(\overline{x_0, x})\right|
\end{equation*}
holds for every $x \in X \setminus \{x_0, x_1\}$.
In particular, by Claim \ref{mmnj}, we have
\begin{equation}\label{wwe}
\int_X\left(\mathrm{Lip}f\right)^pd\upsilon = \int_X\left|\frac{dF}{dt}(\overline{x_0, x})\right|d\upsilon = \int_{\mathbf{S}^n}\left|\frac{dF}{dt}(\overline{w, x})\right|d\mathrm{Vol}=\int_{\mathbf{S}^n}|\nabla \hat{f}|^pd\mathrm{Vol}.
\end{equation}
Thus, Corollary \ref{wahaha}, (\ref{zxd}), and (\ref{wwe}) yield 
\begin{equation}\label{bobobob}
\lambda_{1, p}(X)\le \left(c_p(f)\right)^{-p}\int_X\left(\mathrm{Lip}f\right)^pd\upsilon= \lambda_{1, p}(\mathbf{S}^n).
\end{equation}
Thus, by (\ref{hyhr}) and (\ref{bobobob}), we have the assertion.
\end{proof}
\begin{remark}
By L\'evy-Gromov's isoperimetric inequality \cite{gr} and $(1)$ of Theorem \ref{mthm}, we have the inequality $h(X) \ge h(\mathbf{S}^n)$ for every $(X, \upsilon) \in \overline{M(n, 1, \pi)}$.
It is expected that the equality holds if and only if $\mathrm{diam}\,X=\pi$ holds.
If $F$ as in Section $1$ is continuous on $\overline{M(n, K, d)} \times [1, \infty]$, then this follows from Bayle's result \cite{bay}.
\end{remark}
We end this section by giving the following application of Theorem \ref{mey}:
\begin{corollary}\label{finis}
Let $\epsilon>0$, let $p>1$, and let $M$ be an $n$-dimensional compact Riemannian manifold with $\mathrm{Ric}_M \ge n-1$.
Assume that there exists $p\le q \le \infty$ such that
\[\left| \left( \lambda_{1, q}(M)\right)^{1/q}-\left(\lambda_{1, q}(\mathbf{S}^n)\right)^{1/q}\right| < \epsilon\]
holds.
Then, we have 
\[\left| \left( \lambda_{1, \hat{q}}(M)\right)^{1/\hat{q}}-\left(\lambda_{1, \hat{q}}(\mathbf{S}^n)\right)^{1/\hat{q}}\right| < \Psi( \epsilon; n, p)\]
for every $p \le \hat{q} \le \infty$.
\end{corollary}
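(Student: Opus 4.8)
The plan is to argue by contradiction, exactly mimicking the structure of the proof of Corollary \ref{grosje}, but now taking the limit in both the space \emph{and} the exponent simultaneously, and invoking the rigidity clause of Theorem \ref{mey}. Suppose the statement fails. Then there exist $\epsilon_0>0$, $p>1$, a sequence of $n$-dimensional compact Riemannian manifolds $(M_i,\mathrm{Vol})$ with $\mathrm{Ric}_{M_i}\ge n-1$, exponents $q_i\in[p,\infty]$ and $\hat q_i\in[p,\infty]$ such that
\[
\left|\left(\lambda_{1,q_i}(M_i)\right)^{1/q_i}-\left(\lambda_{1,q_i}(\mathbf{S}^n)\right)^{1/q_i}\right|<\frac1i
\quad\text{but}\quad
\left|\left(\lambda_{1,\hat q_i}(M_i)\right)^{1/\hat q_i}-\left(\lambda_{1,\hat q_i}(\mathbf{S}^n)\right)^{1/\hat q_i}\right|\ge\epsilon_0 .
\]
Since $\mathrm{Ric}_{M_i}\ge n-1$, Myers's theorem gives $\mathrm{diam}\,M_i\le\pi$, so $(M_i,\mathrm{Vol})\in M(n,1,\pi)$; by Gromov's precompactness we may pass to a subsequence so that $(M_i,\mathrm{Vol})\to(X,\nu)$ in $\overline{M(n,1,\pi)}$, and also so that $q_i\to q_\infty\in[p,\infty]$ and $\hat q_i\to\hat q_\infty\in[p,\infty]$.

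Next I would extract the rigidity. Using the notation $F$ of Section $1$ on $\overline{M(n,1,\pi)}$, the first displayed condition says $F((M_i,\mathrm{Vol}),q_i)-F((\mathbf{S}^n,\mathrm{Vol}),q_i)\to 0$. Here I would apply joint continuity of $F$ along the sequence: on the $M_i$-side use $(3)$ of Theorem \ref{mthm} together with $(2)$ (continuity on $\overline{M(n,K,d)}\times(1,\infty]$) to pass to the limit $F((M_i,\mathrm{Vol}),q_i)\to F((X,\nu),q_\infty)$ — note $q_\infty\ge p>1$, so we stay in the region where $F$ is continuous in both variables; on the sphere side, continuity of $q\mapsto(\lambda_{1,q}(\mathbf{S}^n))^{1/q}$ on $[1,\infty]$ (again by $(3)$ of Theorem \ref{mthm} applied to the single space $\mathbf{S}^n$) gives $F((\mathbf{S}^n,\mathrm{Vol}),q_i)\to F((\mathbf{S}^n,\mathrm{Vol}),q_\infty)$. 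Hence $F((X,\nu),q_\infty)=F((\mathbf{S}^n,\mathrm{Vol}),q_\infty)$, i.e. $\left(\lambda_{1,q_\infty}(X)\right)^{1/q_\infty}=\left(\lambda_{1,q_\infty}(\mathbf{S}^n)\right)^{1/q_\infty}$ (interpreting the $q_\infty=\infty$ case with the convention $2/\mathrm{diam}$). By the equality clause of Theorem \ref{mey}, this forces $\mathrm{diam}\,X=\pi$, and then Theorem \ref{mey} again (its equality statement, run in the reverse direction) gives $\left(\lambda_{1,r}(X)\right)^{1/r}=\left(\lambda_{1,r}(\mathbf{S}^n)\right)^{1/r}$ for \emph{every} $1<r\le\infty$.

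Finally I would contradict the second condition. Applying the same continuity argument to the exponents $\hat q_i\to\hat q_\infty$ and the convergence $(M_i,\mathrm{Vol})\to(X,\nu)$, we get
\[
\left|\left(\lambda_{1,\hat q_i}(M_i)\right)^{1/\hat q_i}-\left(\lambda_{1,\hat q_i}(\mathbf{S}^n)\right)^{1/\hat q_i}\right|
\longrightarrow
\left|\left(\lambda_{1,\hat q_\infty}(X)\right)^{1/\hat q_\infty}-\left(\lambda_{1,\hat q_\infty}(\mathbf{S}^n)\right)^{1/\hat q_\infty}\right|=0,
\]
where the last equality is the rigidity just obtained. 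This contradicts the lower bound $\epsilon_0>0$, proving the corollary.

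The step I expect to be the main obstacle is the joint passage to the limit $F((M_i,\mathrm{Vol}),q_i)\to F((X,\nu),q_\infty)$ when $q_\infty=\infty$, and more generally making sure the varying exponent is handled correctly: Theorem \ref{mthm}$(2)$ gives continuity of $F$ on $\overline{M(n,K,d)}\times(1,\infty]$, which does cover $q_\infty=\infty$, so in principle one only needs to check that a product sequence $((M_i,\mathrm{Vol}),q_i)$ converging coordinatewise converges in the product space — this is automatic — and that $\mathbf{S}^n$ (rescaled appropriately, here already of diameter $\pi$) lies in the relevant class. A secondary subtlety is that Theorem \ref{mey}'s equality case is stated for $1<p\le\infty$, so the deduction "$\mathrm{diam}\,X=\pi\Rightarrow$ equality for all $r$" is legitimate; one should just be careful that the hypothesis $p>1$ in the corollary is exactly what keeps every exponent in play strictly above $1$, so the degenerate $p=1$ behavior flagged in Remark \ref{99009900} never intervenes.
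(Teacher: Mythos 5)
Your proposal is correct and is essentially the paper's own argument: a contradiction/compactness argument in $\overline{M(n,1,\pi)}\times[p,\infty]$, passing to the limit via the continuity of $F$ on $\overline{M(n,K,d)}\times(1,\infty]$ from Theorem \ref{mthm} (which is exactly where the hypothesis $p>1$ is used), and then invoking the equality case of Theorem \ref{mey} to see that equality at one exponent forces $\mathrm{diam}=\pi$ and hence equality at every exponent, contradicting the second condition. The paper phrases the final step slightly more tersely (it derives equality at $p_\infty$ and inequality at $\hat p_\infty$ and notes these two facts jointly contradict Theorem \ref{mey}), but the content is identical to yours.
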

\begin{proof}
The proof is done by contradiction.
Assume that the assertion is false.
Then there exist a positive number $\tau>0$, a sequence $p_i \to p_{\infty}$ in $[p, \infty]$, a sequence $\hat{p}_i \to \hat{p}_{\infty}$ in $[p, \infty]$, and a sequence $(M_i, \mathrm{Vol}) \to (M_{\infty}, \upsilon)$ in $\overline{M(n, 1, \pi)}$ such that 
\begin{equation*}\label{ttte}
\lim_{i \to \infty}\left( \lambda_{1, p_i}(M_i) \right)^{1/p_i}=\left( \lambda_{1, p_{\infty}}(\mathbf{S}^n)\right)^{1/p_{\infty}}
\end{equation*} 
holds and that
\begin{equation*}\label{ssse}
\left| \left( \lambda_{1, \hat{p}_i}(M_i) \right)^{1/\hat{p}_i}- \left(\lambda_{1, \hat{p}_i}(\mathbf{S}^n) \right)^{1/\hat{p}_i}\right| \ge \tau
\end{equation*} 
holds for every $i<\infty$.
Thus, by Theorem \ref{mthm}, we see that 
\begin{equation}\label{polll}
\left(\lambda_{1, p_{\infty}}(M_{\infty})\right)^{1/p_{\infty}}=\left( \lambda_{1, p_{\infty}}(\mathbf{S}^n)\right)^{1/p_{\infty}}
\end{equation}
and 
\begin{equation}\label{pollll}
\left(\lambda_{1, \hat{p}_{\infty}}(M_{\infty})\right)^{1/\hat{p}_{\infty}}\neq \left( \lambda_{1, \hat{p}_{\infty}}(\mathbf{S}^n)\right)^{1/\hat{p}_{\infty}}
\end{equation}
hold.
However, (\ref{polll}) and (\ref{pollll}) contradict Theorem \ref{mey}.
\end{proof}
\begin{remark}
It is expected that
we can also choose $p=1$ as in Corollary \ref{finis}.
If $F$ as in Section $1$ is continuous on $\overline{M(n, K, d)} \times [1, \infty]$, then this also follows from an argument similar to the proof of Corollary \ref{finis}.
\end{remark}


\begin{thebibliography}{99}
\bibitem{aub}
\textsc{E. Aubry},
Pincement sur le spectre et le volume en courbure de Ricci positive, Ann. Sci. \'Ecole Norm. Sup.  38 (2005), 387-405.
\bibitem{bay}
\textsc{V. Bayle},
A differential inequality for the isoperimetric profile, Int. Math. Res. Not. 7 (2004), 311-342.
\bibitem{ber}
\textsc{J. Bertrand},
Pincement spectral en courbure de Ricci positive, Comment. Math. Helv. 82 (2007), 323-352.
\bibitem{bh}
\textsc{S. G. Bobkov and C. Houdr\'e},
Isoperimetric constants for product probability measures, Ann. Probab. 25 (1997), 184-205.
\bibitem{bgp}
\textsc{Y. Burago, M. Gromov, and G. Perelman},
A. D. Alexandrov spaces with curvatures bounded below, Uspekhi Mat. Nauk 47 (1992), 3-51.
\bibitem{bu}
\textsc{P. Buser},
A note on the isoperimetric constant, Ann. Sci. \'Ecole Norm. Sup. 15 (1982), 213-230.
\bibitem{chavel}
\textsc{I. Chavel},
Isoperimetric inequalities, Cambridge Tracts in Math. 145, Cambridge Univ. Press, Cambridge, U.K. (2001).
\bibitem{ch0}
\textsc{J. Cheeger},
A lower bound for the smallest eigenvalue of the Laplacian, In Problems in
analysis (Papers dedicated to Salomon Bochner, 1969), pages 195-199. Princeton Univ.
Press, Princeton, N. J., 1970.
\bibitem{ch1}
\textsc{J. Cheeger}, 
Differentiability of Lipschitz functions on metric measure spaces, Geom. Funct. Anal. 9 (1999), 428-517.
\bibitem{ch}
\textsc{J. Cheeger}, 
Degeneration of Riemannian metrics under Ricci curvature bounds, Lezioni Fermiane, Scuola Normale Superiore, Pisa, 2001.
\bibitem{ch-co}
\textsc{J. Cheeger and T. H. Colding}, 
Lower bounds on Ricci curvature and the almost rigidity of warped products, Ann. of Math. 144 (1996), 189-237.
\bibitem{ch-co1}
\textsc{J. Cheeger and T. H. Colding}, 
On the structure of spaces with Ricci curvature bounded below, I, J. Differential Geom. 45 (1997), 406-480.
\bibitem{ch-co2}
\textsc{J. Cheeger and T. H. Colding}, 
On the structure of spaces with Ricci curvature bounded below, II, J. Differential Geom. 54 (2000), 13-35.
\bibitem{ch-co3}
\textsc{J. Cheeger and T. H. Colding}, 
On the structure of spaces with Ricci curvature bounded below, III, J. Differential Geom. 54 (2000), 37-74.
\bibitem{colb}
\textsc{B. Colbois and A. -T. Matei},
On the optimality of J. Cheeger and P. Buser inequalities, Differential Geom. Appl. 19 (2003), 281-293. 
\bibitem{co1}
\textsc{T. H. Colding},
Shape of manifolds with positive Ricci curvature, Invent. Math. 124 (1996) 175-191.
\bibitem{co2}
\textsc{T. H. Colding},
Large manifolds with positive Ricci curvature, Invent. Math. 124 (1996) 193-214.
\bibitem{cm}
\textsc{T. H. Colding and W. P. Minicozzi II},
Liouville theorems for harmonic sections and applications, Comm. Pure Appl. Math. 51 (1998), 113-138.
\bibitem{croke}
\textsc{C. B. Croke},
An eigenvalue pinching theorem, Invent. Math. 68 (1982), 253-256.
\bibitem{fed}
\textsc{H. Federer}, 
Geometric measure theory, Springer, Berlin-New York, 1969.
\bibitem{fed-fl}
\textsc{H. Federer and W. H. Fleming},
Normal and integral currents, Ann. of Math. 72 (1960), 458-520.
\bibitem{fk}
\textsc{V. Fridman and B. Kawohl},
Isoperimetric estimate for the first eigenvalue of the $p$-Laplace operator and the Cheeger constant, Comment. Math. Univ. Carolinae 44, 4(2003), 659-667.
\bibitem{fu}
\textsc{K. Fukaya},
Collapsing of Riemannian manifolds and eigenvalues of the laplace operator, Invent. Math. 87 (1987), 517-547.
\bibitem{fu1}
\textsc{K. Fukaya},
Hausdorff convergence of Riemannian manifolds and its applications, Recent topics in differential and analytic geometry, 143-238, Adv. Stud. Pure Math. 18-I, Academic Press, Boston, MA, (1990).
\bibitem{funano}
\textsc{K. Funano},
Eigenvalues of Laplacian and multi-way isoperimetric constants on weighted Riemannian manifolds, preprint, arXiv:1307.3919.
\bibitem{ga}
\textsc{S. Gallot},
A Sobolev inequality and some geometric application, in Spectra of Riemannian manifolds, Kaigai, Tokyo, (1983), 45-55.
\bibitem{gr}
\textsc{M. Gromov}, 
Metric Structures for Riemannian and Non-Riemannian Spaces, Birkhauser Boston Inc, Boston, MA, 1999, Based on the 1981 French original 
[MR 85e:53051], With appendices by M. Katz, P. Pansu, and S. Semmes, Translated from the French by Sean Michael Bates. 
\bibitem{gros}
\textsc{J.-F. Grosjean},
$p$-Laplace operator and diameter of manifolds, Ann. Global Anal. Geom. 28 (2005), 257-270.
\bibitem{hajl}
\textsc{P. Haj\l asz},
Sobolev spaces on an arbitrary metric space, Potential Anal. 5 (1995), 1211-1215.
\bibitem{HK}
\textsc{P. Haj\l asz and P. Koskela},
Sobolev meets Poincar\'e, C. R. Acad Sci. Paris 320 (1995), 1211-1215.
\bibitem{HK2}
\textsc{P. Haj\l asz and P. Koskela},
Sobolev met Poincar\'e, Mem. Amer. Math. Soc. 145 (2000), no. 688.
\bibitem{He}
\textsc{J. Heinonen},
Lectures on analysis on metric spaces, Universitext, Springer-Verlag, New York 2001.
\bibitem{heko}
\textsc{J. Heinonen and P. Koskela},
Quasiconformal maps in metric spaces with controlled geometry, Acta Math. 181 (1998), 1-61.
\bibitem{ho8}
\textsc{S. Honda},
Ricci curvature and almost spherical multi-suspension, Tohoku Math. J. 4 (2009), 499-522.
\bibitem{hob}
\textsc{S. Honda},
Bishop-Gromov type inequality on Ricci limit spaces, J. Math. Soc. Japan 63 (2011), 419-442.
\bibitem{holip}
\textsc{S. Honda},
Ricci curvature and convergence of Lipschitz functions, Commun. Anal. Geom. 19 (2011), 79-158.
\bibitem{ho0}
\textsc{S. Honda},
A weakly second order differential structure on rectifiable metric measure spaces, arXiv:1112.0099, to appear in Geometry and Topology.
\bibitem{ho}
\textsc{S. Honda},
Ricci curvature and $L^p$-convergence, to appear in J. reine angew. Math. available at 
http://www.degruyter.com/view/j/crelle.ahead-of-print/crelle-2013-0061/crelle-2013-0061.xml?format=INT
\bibitem{kn}
\textsc{S. Kawai and N. Nakauchi},
The first eigenvalue of the p-Laplacian on a compact Riemannian manifold, Nonlinear Anal. 55 (2003), 33-46.
\bibitem{keith}
\textsc{S. Keith},
Modulus and the Poincar\'e inequality on metric measure spaces, Math. Z. 245 (2003), 255-292.
\bibitem{kz}
\textsc{S. Keith and X. Zhong},
The Poincar\'e inequality is an open ended condition, Ann. of Math. 167 (2008), 575-599.
\bibitem{ks1}
\textsc{K. Kuwae and T. Shioya},
On generalized measure contraction property and energy functionals over Lipschitz maps, Potential Anal. 15 (2001), 105-121.
\bibitem{ks9}
\textsc{K. Kuwae and T. Shioya},
Convergence of spectral structures: A functional analytic theory and its applications to spectral geometry, Comm. Anal. Geom. 11 (2003), 599-673.
\bibitem{ks2}
\textsc{K. Kuwae and T. Shioya},
Sobolev and Dirichlet spaces over maps between metric spaces. J. reine angew. Math. 555 (2003), 39-75.
\bibitem{KMS}
\textsc{K. Kuwae, Y. Machigashira, and T. Shioya},
Sobolev spaces, Laplacian, and heat kernel on Alexandrov spaces, Math. Z. 238 (2001), 269-316.
\bibitem{led01}
\textsc{M. Ledoux},
A simple analytic proof of an inequality by P. Buser, Proc. Amer. Math. Soc. 121 (1994), 951-959. 
\bibitem{led0}
\textsc{M. Ledoux},
The concentration of measure phenomenon, Mathematical
Surveys and Monographs, 89. American Mathematical Society, Providence, RI, 2001.
\bibitem{led}
\textsc{M. Ledoux},
Spectral gap, logarithmic Sobolev constant, and geometric bounds, Surveys in differential geometry. Vol. IX, 219-240, Surv. Differ.
Geom., IX, Int. Press, Somerville, MA, 2004.
\bibitem{ly}
\textsc{P. Li and S. T. Yau},
Estimates of eigenvalues of a compact Riemannian manifold, AMS Proc. Symp. Pure Math. 36 (1980), 205-239.
\bibitem{Lin}
\textsc{P. Lindqvist},
On the equation $\mathrm{div} (|\nabla u|^{p-2}\nabla u) + \lambda |u|^{p-2}u=0$, Proc. Amer. Math. Soc. 109 (1990), 157-164.
\bibitem{lo-vi}
\textsc{J. Lott and C. Villani},
Ricci curvature for metric measure spaces via optimal transport, Ann. of Math. 169 (2009), 903-991.
\bibitem{Ma}
\textsc{A.-M. Matei},
First eigenvalue for the $p$-Laplace operator, Nonlinear Anal. 39 (2000), 1051-1068.
\bibitem{matei2}
\textsc{A.-M. Matei},
Boundedness of the first eigenvalue of the $p$-Laplacian, Proc. Amer. Math. Soc. 133 (2005), 2183-2192.
\bibitem{ma1}
\textsc{V. G. Maz'ja},
Classes of domains and imbedding theorems for function spaces, Dokl.
Acad. Nauk SSSR, 3:527-530, 1960. Engl. transl. Soviet Math. Dokl. 1 (1961) 882-885.
\bibitem{ma2}
\textsc{V. G. Maz'ja},
$p$-conductivity and theorems on imbedding certain functional spaces into a
C-space, Dokl. Akad. Nauk SSSR, 140:299-302, 1961. Engl. transl. Soviet Math. Dokl. 2
(1961) 1200-1203.
\bibitem{ma3}
\textsc{V. G. Maz'ja},
Sobolev spaces, Springer Series in Soviet Mathematics. Springer-Verlag,
Berlin, 1985.
\bibitem{mil}
\textsc{E. Milman},
On the role of convexity in isoperimetry, spectral gap and concentration, Invent. Math. 177 (2009), 1-43.
\bibitem{myers}
\textsc{S. B. Myers},
Riemannian manifolds with positive mean curvature, Duke Math. J. 8 (1941), 401-404.
\bibitem{NV}
\textsc{A. Naber and D. Valtorta},
Sharp estimates on the first eigenvalue of the $p$-Laplacian with negative Ricci lower bound, arXiv:1208.3507, preprint.
\bibitem{oh}
\textsc{S. Ohta},
On the measure contraction property of metric measure spaces, Comment. Math. Helv. 82 (2007), 805-828.
\bibitem{os}
\textsc{Y. Otsu and T. Shioya},
The Riemannian structure of Alexandrov spaces, J. Differential Geom. 39 (1994), 629-658.
\bibitem{Ra}
\textsc{T. Rajala},
Local Poincar\'e inequalities from stable curvature conditions on metric spaces, Calc. Var. Partial Differential Equations 44 (2012), 477-494.
\bibitem{rm}
\textsc{A. Ranjbar-Motlagh},
On the Poincar\'e inequality for abstract spaces, Bull. Austral. Math. Soc. 71 (2005), 193-204.
\bibitem{sc-ya}
\textsc{R. Schoen and S. T. Yau},
Lectures on Differential Geometry, International Press, 1995.
\bibitem{seri}
\textsc{J, Serrin},
Local behavior of solutions of quasi-linear equations, Acta Math. 111 (1964), 247-302.
\bibitem{shanm}
\textsc{N. Shanmugalingam},
Newtonian spaces: an extension of Sobolev spaces to metric measure spaces, Rev. Mat. Iberoamericana 16 (2000), 243-279.
\bibitem{st1}
\textsc{K. -T. Sturm},
On the geometry of metric measure spaces, Acta Math. 196 (2006), 65-131.
\bibitem{st2}
\textsc{K. -T. Sturm},
On the geometry of metric measure spaces II, Acta Math. 196 (2006), 133-177.
\bibitem{tol}
\textsc{P. Tolksdorf},
Regularity for a more general class of quasilinear elliptic equations, J. Differential Equations 51 (1984), 126-150.
\bibitem{V}
\textsc{D.Valtorta},
Sharp estimate on the first eigenvalue of the $p$-Laplacian, Nonlinear Anal. 75 (2012), 4974-4994.
\bibitem{V2}
\textsc{D.Valtorta},
On the $p$-Laplace operator on Riemannian manifolds, PhD Thesis, available at arXiv:1212.3422.
\bibitem{velon}
\textsc{L. Veron},
Some Existence and Uniqueness Results for Solution of Some Quasilinear Elliptic Equations on Compact Riemannian Manifolds, Colloquia Mathematica Societatis J\'anos Bolyai, 62, P.D.E., Budapest (1991), 317-352.
\bibitem{vi}
\textsc{C. Villani},
Optimal transport, old and new, Springer-Verlag, 2008.
\bibitem{WWZ}
\textsc{J.-Y. Wu, E.-M. Wang, and Y. Zheng},
First eigenvalue of the $p$-Laplace operator along the Ricci flow, Ann. Global Anal. Geom. 38 (2010), 27-55.
\bibitem{Y}
\textsc{S. T. Yau},
Isoperimetric constants and the first eigenvalue of a compact Riemannian manifold, Ann. Sci. \'Ecole Norm. Sup. 8 (1975), 487-507. 
\bibitem{zy}
\textsc{J. Q. Zhong and H. C. Yang},
On the estimate of the first eigenvalue of a compact Riemannian manifold, Sci. Sinica Ser. A 27 (1984), 1265-1273.
\end{thebibliography}
\end{document}